\def\namedlabel#1#2{\begingroup
 #2%
 \def\@currentlabel{#2}%
 \phantomsection\label{#1}\endgroup
}
\theoremstyle{plain}
\newtheorem*{theorem*}{Theorem}
\newtheorem*{thmex*}{Theorem~\ref{example}}
\newtheorem*{thmasymp*}{Theorem~\ref{thmAsymp}}
\newtheorem{theorem}{Theorem}[section]
\newtheorem{claim}[theorem]{Claim}
\newtheorem{corollary}[theorem]{Corollary}
\newtheorem{definition}[theorem]{Definition}
\newtheorem{lemma}[theorem]{Lemma}
\newtheorem{proposition}[theorem]{Proposition}
\newtheorem{remark}[theorem]{Remark}
\newcommand{\Gi}{{G_\infty}}
\newcommand{\de}{\delta}
\newcommand{\es}{\emptyset}
\newcommand{\R}{\mathbb{R}}
\newcommand{\Z}{\mathbb{Z}}
\newcommand{\N}{{\mathbb{N}}}
\newcommand{\sn}[1]{{\mathbb{S}^{#1}}}
\newcommand{\hn}[1]{{\mathbb{H}^{#1}}}
\newcommand{\forma}[1]{\langle #1 \rangle}
\newcommand{\soma}[2]{\displaystyle \sum_{#1}^{#2}}
\newcommand{\inte}[1]{\displaystyle \int_{#1}}
\newcommand{\G}{\Gamma}
\newcommand{\wt}{\widetilde}
\newcommand{\g}{\gamma}
\newcommand{\Int}{\mbox{\rm Int}}
\renewcommand{\a}{\alpha}
\newcommand{\cA}{{\mathcal A}}
\newcommand{\cC}{{\mathcal C}}
\newcommand{\cE}{{\mathcal E}}
\newcommand{\cH}{{\mathcal H}}
\newcommand{\cL}{{\mathcal L}}
\newcommand{\cM}{{\mathcal M}}
\newcommand{\cP}{{\mathcal P}}
\newcommand{\cR}{{\mathcal R}}
\newcommand{\cS}{{\mathcal S}}
\newcommand{\cT}{{\mathcal T}}
\newcommand{\grad}{\mbox{\rm grad}}
\newcommand{\norma}[1]{\Vert #1 \Vert}
\newcommand{\n}{\nabla}
\newcommand{\Et}{\widetilde{E}}
\newcommand{\ve}{\varepsilon}
\newcommand{\abs}[1]{\vert #1 \vert}
\newcommand{\Hess}{\mbox{\rm Hess}}
\newcommand{\len}{\mbox{\rm Length}}
\newcommand{\A}{\mbox{\rm Area}}
\newcommand{\ed}{\end{document}}
\renewcommand{\S}{\Sigma}
\newcommand{\Inj}{\mbox{\rm Inj}}
\newcommand{\ol}{\overline}
\newcommand{\wh}{\widehat}
\definecolor{rrr}{rgb}{.9,0,.1}
\definecolor{rr}{rgb}{.8,0,.3}
\definecolor{b}{rgb}{.1,.1,.7}
\definecolor{pp}{rgb}{.5,0,.7}
\definecolor{drr}{rgb}{.5,0,.4}
\definecolor{y}{rgb}{.7,.2,0}
\definecolor{plum}{rgb}{0.5,0,0.5}
\begin{document}

\title{Properly immersed surfaces in hyperbolic 3-manifolds}

\author{
William H. Meeks III\thanks{This material is based upon
 work for the NSF under Award No. DMS -
 1309236. Any opinions, findings, and conclusions or recommendations
 expressed in this publication are those of the authors and do not
 necessarily reflect the views of the NSF.}
 \and \'Alvaro K. Ramos\thanks{Both authors were partially supported
 by CNPq - Brazil, grant no. 400966/2014-0.}
}

\maketitle

\begin{abstract}
We study complete finite topology immersed surfaces $\S$ in complete
Riemannian $3$-manifolds
$N$ with sectional curvature $K_N\leq -a^2\leq 0$, such that the absolute mean
curvature
function of $\S$ is bounded from above by $a$ and its injectivity radius
function is not
bounded away from zero on each of its annular
end representatives. We prove that such a surface
$\S$ must be proper in $N$ and its total curvature must be equal to $2\pi \chi(\S)$.
If $N$ is a hyperbolic $3$-manifold of finite volume and $\S$ is a properly immersed
surface of finite topology with nonnegative
constant mean curvature less than 1,
then we prove that each end of $\S$ is asymptotic
(with finite positive multiplicity) to
a totally umbilic annulus, properly embedded in $N$.

\vspace{.15cm}
\noindent{\it Mathematics Subject Classification:} Primary 53A10, Secondary 49Q05, 53C42.

\vspace{.1cm}

\noindent{\it Key words and phrases:} Calabi-Yau problem,
hyperbolic $3$-manifolds, asymptotic injectivity radius, bounded mean curvature,
isoperimetric inequality.
\end{abstract}

\section{Introduction.}

In the celebrated paper~\cite{cm35}, Colding and Minicozzi proved
that complete minimal surfaces of finite topology embedded in $\R^3$
are proper. Based on the proof of this result, Meeks and
Rosenberg~\cite{mr13} showed that complete, connected minimal surfaces
 with positive injectivity radius embedded
in $\R^3$ are proper.
Meeks and Tinaglia~\cite{mt7} then extended both results by proving
that complete surfaces with constant mean curvature embedded in $\mathbb{R}^3$
are proper if they have finite topology or positive injectivity radius.
It is natural to ask to what extent similar
properness results hold for complete surfaces
of finite topology in
other ambient
spaces, where the surfaces are not necessarily embedded or have constant mean curvature.

In this paper we investigate relationships between properness of a complete
immersed surface of finite topology in a 3-manifold of nonpositive sectional curvature
under certain restrictions on
its injectivity radius function and on its mean curvature function.
We first derive area estimates for certain compact
surfaces with bounded absolute mean curvature in Hadamard $3$-manifolds.
We next apply these area estimates to obtain sufficient conditions
for a complete
finite topology surface immersed in a complete $3$-manifold
with nonpositive sectional curvature
to be proper,
as described in Theorem~\ref{thmMain} below. In order to state
this result we need the following definition.

\begin{definition}\label{defAs}{\em
Let $e$ be an end of a complete Riemannian surface $\Sigma$
whose injectivity radius
function we denote by $I_{\Sigma}\colon \Sigma \to (0,\infty)$.
Let
$\cE(e)$ be the collection of proper subdomains $E \subset \S$, with compact
boundary,
that represents $e$.
We define the (lower)
{\em asymptotic injectivity radius of $e$} by
$$I_\Sigma^\infty(e) = \sup\{\inf_{E} I_\Sigma\vert_{E}\mid E \in \cE(e)\}
\in [0,\infty].
$$
}
\end{definition} \vspace{.2cm}

Note that if $\S$ has an end $e$ which admits a one-ended representative $E$,
then $I_\S^\infty(e) = \liminf_EI_\S\vert_E$.
If $\S$ has finite topology, then every end of $\S$ has a one-ended
representative which is an annulus (i.e., a
surface with the topology of $\sn1\times[0,\infty)$), hence this simpler
definition can be used. Moreover,
Lemma~\ref{lemmaLimitInjRad}
in the Appendix shows that if $\S$ has nonpositive
Gaussian curvature and an end
$e$ has a representative $E$ which is an annulus,
then for every divergent sequence of points $\{p_n\}_{n\in\N}$ on $E$,
$$\lim_{n\to \infty}I_\S(p_n)=I_\Sigma^\infty(e).$$

We define the {\em mean curvature function} of
an immersed two-sided surface $\S$ with a given unit
normal field in a Riemannian 3-manifold to be the
pointwise average of its
principal curvatures; note that if $\S$ does not have a unit
normal field, then the absolute value
$\abs{H_\S}$ of the mean curvature function of
$\S$ still makes sense because a unit normal field locally
exists on $\S$ and under a change of this local choice,
the principal curvatures change sign.

\begin{theorem}\label{thmMain}
Let $N$ be a complete $3$-manifold with sectional curvature
$K_N\leq -a^2\leq 0$, for some $a \geq 0$.
Let $\varphi\colon \Sigma \rightarrow N$ be an
isometric immersion of a complete surface $\Sigma$
with finite topology, whose mean curvature function
satisfies $\vert H_\varphi \vert \leq a$. Then $\S$ has nonpositive Gaussian
curvature and the following hold:

\begin{enumerate}[A.]
\item \label{iA} If $N$ is simply connected,
then $I_\S^\infty(e) = \infty$ for every end $e$ of $\S$.
\item \label{iB}
If $N$ has positive injectivity radius $\Inj(N) = \delta > 0$, then every
 end $e$ of $\Sigma$ satisfies $I_\Sigma^\infty(e)\geq \delta$. In particular,
$\Sigma$ has positive injectivity radius.
\item \label{iC}
If $I_\Sigma$ is bounded,
then $\Sigma$ has finite total curvature
$$
\inte{\Sigma}K_\Sigma = 2\pi \chi(\Sigma),
$$
where $\chi(\Sigma)$ is the Euler characteristic of $\Sigma$.
Furthermore, for each annular end representative $E$ of $\Sigma$,
the induced map $\varphi_*\colon \pi_1(E) \rightarrow \pi_1(N)$
on fundamental groups is injective.
\item \label{iD}
If $I_\S^\infty(e) = 0$ for each end $e$ of $\S$, then $\varphi$ is proper.
\end{enumerate}
\end{theorem}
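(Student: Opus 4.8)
The plan is to prove part~\ref{iA} first and then obtain~\ref{iB},~\ref{iD} and the second assertion of~\ref{iC} from it by covering-space arguments, with the first assertion of~\ref{iC} handled by a separate Gauss--Bonnet computation. Two observations are common to everything. First, the Gauss equation gives at each point $K_\S=K_N(T_p\S)+\kappa_1\kappa_2$, and since $\kappa_1\kappa_2\le\bigl(\tfrac{\kappa_1+\kappa_2}{2}\bigr)^2=H_\varphi^2\le a^2$ while $K_N(T_p\S)\le-a^2$, we get $K_\S\le0$; hence $\St$ is a Hadamard surface, so $\S$ carries no nontrivial null-homotopic geodesic loop, the shortest geodesic loop at a point is primitive and is simple away from its base point, and $\pi_1(\S)$ is torsion-free whenever $\S$ has an end (then $\S$ is noncompact, with free fundamental group). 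Second, $\pi_1(N)$ is torsion-free, since the Hadamard cover $\Nt$ is contractible and a finite group of isometries of a Hadamard manifold fixes the circumcenter of an orbit.

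For part~\ref{iA} I would argue by contradiction: suppose an end $e$ has $R:=I^\infty_\S(e)<\infty$ and fix an annular representative $E$. By Lemma~\ref{lemmaLimitInjRad}, $I_\S(p)\to R$ as $p\to\infty$ in $E$, so deep in $E$ the shortest geodesic loop $\gamma_p$ at $p$ has length $\to 2R$, generates $\pi_1(E)\cong\Z$, and (being simple away from its corner) separates $E$; choosing base points $p_1,p_2,\dots$ successively deeper yields disjoint nested such loops $\gamma_1,\gamma_2,\dots$ bounding an exhausting family of compact annuli $A_{m,m'}\subset E$ ($m<m'$) with $\partial A_{m,m'}=\gamma_m\cup\gamma_{m'}$ and $\len(\partial A_{m,m'})\le 4R+o(1)$. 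The area estimates established earlier for compact bounded-mean-curvature surfaces in Hadamard $3$-manifolds bound $\A(A_{m,m'})$ in terms of $a$, $\len(\partial A_{m,m'})$ and the (fixed) topological type, hence uniformly in $m,m'$; letting $m'\to\infty$ shows $E$ has finite area. If $R>0$ this is immediately absurd: deep in $E$ one has $I_\S\ge R-\ve>0$, so $E$ contains infinitely many pairwise disjoint embedded geodesic disks of radius $R-\ve$, each of area $\ge\pi(R-\ve)^2$ since $K_\S\le0$, forcing $\A(E)=\infty$. The case $R=0$ is the main obstacle. Here $\len(\gamma_m)\to0$; since $\varphi\circ\gamma_m$ is a closed curve in the Hadamard manifold $N$, its total curvature is at least $2\pi$ (Fenchel's theorem in Hadamard manifolds), and since $\gamma_m$ is an intrinsic geodesic its curvature in $N$ equals the normal curvature of $\S$, which is bounded by $\|\mathrm{II}_\varphi\|$; hence $\sup_{\gamma_m}\|\mathrm{II}_\varphi\|\gtrsim 1/\len(\gamma_m)\to\infty$, equivalently (from $\|\mathrm{II}_\varphi\|^2\le 2a^2-2K_\S$) the intrinsic curvature of $\S$ blows up along the $\gamma_m$. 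I would then rescale the immersion at the points of maximal $\|\mathrm{II}_\varphi\|$ on $\gamma_m$: in the sub-case in which $\varphi(\S)$ accumulates at a point of $N$, the ambient metric there has bounded geometry, so the rescalings subconverge to a complete non-flat minimal surface in $\R^3$ whose area growth, cusp-type end, and the discrete symmetry forced by the collapsing pair of lifts of $\gamma_m$ contradict the finite-area and Fenchel bounds; in the sub-case in which $\varphi(\gamma_m)$ escapes to infinity in $N$, I would instead exploit that a cusp-type end is parabolic, together with the subharmonicity on $\S$ of $\cosh\!\bigl(a\,d_N(\cdot,x_0)\circ\varphi\bigr)$ (immediate from the Hessian comparison for $d_N$ and $|H_\varphi|\le a$) and the interior maximum principle against geodesic spheres, whose mean curvature in $K_N\le-a^2$ exceeds $a$. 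Making this rescaling analysis rigorous — in particular controlling multiplicity and the ambient geometry when the necks escape to infinity — is the crux, and is exactly where the full strength of the area estimates is needed.

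Given part~\ref{iA}, parts~\ref{iB} and~\ref{iD} reduce to it by the same mechanism. Let $E$ be an annular end with generator $\sigma$. In the setting of~\ref{iB} assume $I^\infty_\S(e)<\de$; in that of~\ref{iD} assume $\varphi$ is not proper, so there is a compact $C\subset N$ with $\de_0:=\inf_C\Inj_N>0$ meeting $\varphi(\S)$ along a divergent sequence $q_m$ in some end. Either way, by Lemma~\ref{lemmaLimitInjRad}, for large $m$ the shortest geodesic loop $\gamma_m$ at the relevant deep point has length below $2\de$ (resp.\ below $2\de_0\le 2\Inj_N(\varphi(q_m))$), so $\varphi\circ\gamma_m$ lies in an embedded metric ball of $N$ and is null-homotopic, i.e.\ $\varphi_*[\gamma_m]=1$; but $[\gamma_m]\ne1$ in $\pi_1(\S)$ and lies in the image of $\pi_1(E)$, so that image is a nontrivial — hence infinite cyclic — subgroup of $\pi_1(\S)$, whence $\pi_1(E)\hookrightarrow\pi_1(\S)$, and writing $[\gamma_m]=\sigma^{k_m}$ with $k_m\ne0$ and using torsion-freeness of $\pi_1(N)$ gives $\varphi_*(\sigma)=1$, so $\varphi_*\pi_1(E)=1$. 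Then $\varphi|_E$ lifts to $\Nt$, and the proof of part~\ref{iA} — which only inspects the interior of the end — yields $I^\infty_\S(e)=\infty$, contradicting $I^\infty_\S(e)<\de$ in~\ref{iB} and $I^\infty_\S(e)=0$ in~\ref{iD}. This proves~\ref{iD} and~\ref{iB}; moreover, since $I_\S$ is then $\ge\de$ near infinity along every end and is continuous and positive on a compact exhausting core, $\S$ has positive injectivity radius.

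For part~\ref{iC}, injectivity of $\varphi_*$ on $\pi_1(E)$ follows in the same way: if it failed then $\varphi_*\pi_1(E)=1$ by torsion-freeness, and lifting and invoking part~\ref{iA} would give $I^\infty_\S(e)=\infty$, contrary to $I_\S$ bounded. For the total curvature identity, I would apply Gauss--Bonnet with corners to the annuli $A_{m,m'}$ lying between the shortest geodesic loops $\gamma_m$ deep in an end $E$: since each $\gamma_m$ is geodesic except at one corner, $\int_{A_{m,m'}}K_\S=-(\theta_m+\theta_{m'})$ with exterior angles $\theta_m,\theta_{m'}\in(-\pi,\pi)$, so $\int_{A_{m,m'}}|K_\S|<2\pi$, and letting $m'\to\infty$ shows each end has finite total curvature. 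Hence $\S$ has finite total curvature, and the Cohn--Vossen/Huber theory gives $\int_\S K_\S=2\pi\chi(\S)-\sum_e c_e$ with each defect $c_e\ge0$, equal to $2\pi\bigl(1-\alpha(E)\bigr)$ for the Huber exponent $\alpha(E)\le1$ of the (conformally punctured-disk) end $E$. If $\alpha(E)<1$ the end $E$ would be asymptotically a cone of positive angle, hence have injectivity radius tending to infinity, contradicting $I_\S$ bounded; so $\alpha(E)=1$, $c_e=0$, and $\int_\S K_\S=2\pi\chi(\S)$, as claimed.
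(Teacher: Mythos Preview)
Your reductions for parts~\ref{iB}, \ref{iD} and the injectivity claim in~\ref{iC} are essentially the paper's argument: show that failure forces $\varphi_*\pi_1(E)$ to be trivial, lift $\varphi|_E$ to the Hadamard cover $\Nt$, and invoke part~\ref{iA}. Your use of torsion-freeness of $\pi_1(N)$ to pass directly from $\varphi_*(\sigma^{k})=1$ to $\varphi_*(\sigma)=1$ is a slight streamlining; the paper instead takes the finite cover of $E$ corresponding to $\ker\varphi_*$ and applies~\ref{iA} to that cover. For the total-curvature identity in~\ref{iC} the paper simply quotes Huber's theorem; your Gauss--Bonnet-on-annuli plus Huber-exponent argument is a reasonable alternative.

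The genuine gap is in part~\ref{iA}, in the sub-case $R=I^\infty_\S(e)=0$. You correctly conclude from the area estimate (Corollary~\ref{corArEst}) that the annuli $A_{m,m'}$ have uniformly bounded area, hence $E$ has \emph{finite} area. For $R>0$ you then get a clean contradiction via disjoint embedded intrinsic disks of radius $R-\ve$. For $R=0$ that argument collapses, and the Fenchel/rescaling program you outline is neither complete nor clearly salvageable: you obtain $\|\mathrm{II}_\varphi\|\to\infty$ along the shrinking loops, but this is perfectly compatible with finite area and finite total curvature, and the limiting ``non-flat minimal surface in $\R^3$'' you describe carries no obvious contradiction with any of the quantities you control. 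You yourself flag that ``making this rescaling analysis rigorous \ldots\ is the crux,'' and indeed it is the missing idea.

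The paper avoids the case split entirely with a much simpler observation: \emph{any} complete noncompact annulus $E$ immersed with $|H_\varphi|\le a$ in a Hadamard $3$-manifold with $K_N\le -a^2$ has \emph{infinite} area, regardless of its intrinsic injectivity radius. The point is a monotonicity-type lower bound (the paper cites Frensel): there is an $\ve>0$ depending only on $a$ such that every intrinsic metric ball $B_E(p,\ve)\subset E\setminus\partial E$ has area at least some fixed positive constant. This is an \emph{extrinsic} estimate, coming from the mean-curvature bound and the ambient curvature bound, not from intrinsic injectivity radius; in particular it does not require $B_E(p,\ve)$ to be a disk. Packing infinitely many such balls along a ray in the noncompact $E$ gives $\A(E)=\infty$, which contradicts the finite-area conclusion and finishes~\ref{iA} uniformly in $R$. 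Once you have this, your entire $R=0$ discussion can be deleted.
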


The other main theorem of the paper, Theorem~\ref{thmAsymp} below,
describes, among other things, results on
the asymptotic behavior of complete, properly immersed finite topology surfaces
of constant absolute mean curvature $H\in [0,1)$
in hyperbolic 3-manifolds of finite volume.
Our asymptotic description of these surfaces was inspired
by Theorem~1.1 of Collin, Hauswirth and Rosenberg~\cite{chr2}
who obtained it in the special case that $H=0$.

For any connected, noncompact, orientable surface
of finite topology $S$ different from
an annulus or a plane, there exists a hyperbolic
$3$-manifold $N_S$ of finite volume that admits a properly embedded surface
$\S$, that is totally geodesic in $N_S$, homeomorphic to $S$ and such that each end
of $N_S$ contains at most one end of $\S$; then a ``$t$-parallel''
surface to $\S$ is a properly immersed surface of constant mean curvature
$H(t) = \cos(\arctan(1/t))$. As $t$ ranges from $0$ to $\infty$, this
gives examples with all the possible mean curvatures $H \in [0,1)$. Moreover,
for $t$ sufficiently small, such parallel surfaces can be shown to be embedded,
see~\cite{amr1}.

\begin{theorem}\label{thmAsymp} Let $N$ be a complete, noncompact
hyperbolic $3$-manifold of
finite volume and $H \in [0,\,1)$. Let $\S$ be a
complete, properly immersed surface in $N$
with $\abs{H_\Sigma} \leq H$. Then:

\begin{enumerate}
\item\label{one} $\Sigma$ has finite area and a finite number of connected
components.
\item\label{two} For any divergent sequence of points $\{p_n\}_{n\in\N} \subset \S$,
$\lim_{n\to \infty}I_\S(p_n)=0$.
\item\label{three} $\Sigma$ has total curvature
\begin{equation}\label{totcurv2}
\displaystyle\int_{\Sigma} K_\Sigma = 2\pi\chi (\S).
\end{equation}
\item\label{four}
If $\S$ has infinite topology, then the norm of its second fundamental form
is unbounded.
\item \label{six} If $H_\S=H$, then every annular end representative of an end of $\S$ is
asymptotic (with finite multiplicity) in the $C^2$-norm,
to a totally umbilic annulus properly embedded in $N$. In particular,
if $\S$ has finite topology, then
the norm of the second fundamental form of $\S$ is bounded.
\end{enumerate}
\end{theorem}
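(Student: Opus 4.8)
The plan is to prove the five conclusions of Theorem~\ref{thmAsymp} by combining the structural results of Theorem~\ref{thmMain} with the geometry of the thin part of a finite-volume hyperbolic $3$-manifold.

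First I would establish item~\ref{one}. Since $N$ is a complete noncompact hyperbolic $3$-manifold of finite volume, the thick-thin decomposition says that $N$ has finitely many cusp ends, each cusp being isometric to a quotient of a horoball by a parabolic group, and outside a compact core the injectivity radius of $N$ tends to $0$ only inside these cusps. I would first argue that $\S$, being properly immersed with $\abs{H_\S}\le H<1$, cannot enter a cusp end of $N$ beyond a fixed compact region: inside a cusp the mean-convex horospherical foliation has mean curvature $1$ with respect to the inward normal, and the maximum principle (comparing $\S$ with the horospheres, using $\abs{H_\S}\le H<1$) forces $\S$ to stay in a region where the injectivity radius of $N$ is bounded below, unless $\S$ has an end going into the cusp; a careful barrier argument shows in fact that if an end of $\S$ enters a cusp it must do so with the end accumulating on the cusp cross-section, and then the area of that end is finite because the cusp cross-sectional tori have exponentially decaying area. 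Combining the finite-volume compact-core part (where $\S$ has bounded local area by monotonicity-type estimates coming from the area bounds of the earlier sections) with the finite-area contribution from the cusps gives that $\S$ has finite total area; finiteness of the number of components then follows since each component has area bounded below by a universal positive constant (again by a monotonicity argument using $\abs{H_\S}\le H<1$ and $K_N=-1$).

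Next, item~\ref{two} is the key link to Theorem~\ref{thmMain}. Once we know $\S$ has finite area, it has finite topology (an immersed complete surface of finite area and bounded curvature in a complete $3$-manifold has finite topology — I would cite or quickly reprove this via the Gauss equation and Cohn-Vossen), so Theorem~\ref{thmMain} applies with $a=1$. I would show that $I_\S^\infty(e)=0$ for each end $e$: indeed an end of $\S$ of positive asymptotic injectivity radius would, by properness and the thick-thin decomposition, eventually lie in the thick part of $N$, which has finite volume; but a properly immersed annular end of bounded curvature and area contained in a region of bounded geometry and finite volume would have to accumulate somewhere, and a limit-surface/monotonicity argument shows the area would then be infinite, a contradiction with item~\ref{one}. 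Hence $I_\S^\infty(e)=0$ for every end, so by part~\ref{iD} of Theorem~\ref{thmMain} (which $\S$ already satisfies, being proper) and by the limit statement following Definition~\ref{defAs} (Lemma~\ref{lemmaLimitInjRad} in the Appendix, valid since $K_\S\le 0$ by Theorem~\ref{thmMain}), $\lim_{n\to\infty}I_\S(p_n)=0$ along every divergent sequence.

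Item~\ref{three} is then immediate: $I_\S$ is bounded (it tends to $0$ at infinity and is continuous on the compact core), so part~\ref{iC} of Theorem~\ref{thmMain} gives $\int_\S K_\S=2\pi\chi(\S)$. For item~\ref{four}, I argue contrapositively: if the second fundamental form of $\S$ were bounded, then together with $\abs{H_\S}\le H$ and the Gauss equation the Gaussian curvature $K_\S$ would be bounded, and a surface of finite area, bounded curvature and bounded second fundamental form properly immersed in $N$ has finite topology (bounded geometry forces the ends to be annuli), contradicting the assumption of infinite topology.

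The main obstacle — and the real content — is item~\ref{six}, the asymptotic description when $H_\S=H$ is constant. Here I would restrict to a single annular end representative $E$ of $\S$; by item~\ref{two} its injectivity radius tends to $0$, so by the thick-thin decomposition $E$ eventually enters a cusp $C$ of $N$, and by properness $E$ goes to the cusp cross-section. Lifting to the universal cover $\mathbb{H}^3$ and working in upper half-space coordinates where $C$ corresponds to a horoball $\{x_3>c\}$ modulo a rank-two parabolic lattice $\Lambda$, the end $E$ lifts to a multigraph over a horosphere; I would use the strong stability/curvature-estimate machinery for constant-mean-curvature surfaces together with the bound $H<1$ to show that, as one goes up the cusp, this multigraph has second fundamental form tending (in $C^0$, then by Schauder in $C^2$) to that of the horospherical-type umbilic surfaces — precisely, to an equidistant surface from a totally geodesic surface that is invariant under a rank-one sublattice, or to a horosphere itself, these being the totally umbilic surfaces of $\mathbb{H}^3$ with mean curvature $H$. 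The finiteness of the multiplicity comes from the finite area of $E$ together with the fact that the limiting umbilic annulus in the quotient has positive area per fundamental domain, so only finitely many sheets can accumulate. Once $E$ is $C^2$-asymptotic to a fixed properly embedded totally umbilic annulus, its second fundamental form is asymptotically constant, hence bounded, and combined with the compact core this gives the final assertion that for $\S$ of finite topology the norm of the second fundamental form is bounded. The delicate points in this last step are (i) ruling out that the lifted multigraph degenerates or oscillates without converging — handled by a curvature estimate for stable-type $H$-surfaces and the fact that the limit must be a complete $H$-surface in $\mathbb{H}^3$ asymptotic to a point at infinity, forcing it to be totally umbilic by a classification in the spirit of~\cite{chr2}; and (ii) upgrading $C^0$ convergence of the normal to genuine $C^2$ graphical convergence over the cusp cross-section, which is the standard elliptic-bootstrap argument once $C^1$ closeness is established.
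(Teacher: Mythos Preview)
Your argument for item~\ref{two} has a genuine circularity that breaks the logical chain for items~\ref{two}--\ref{four}. You write that ``once we know $\Sigma$ has finite area, it has finite topology (an immersed complete surface of finite area and bounded curvature \ldots\ has finite topology)'' and then invoke Theorem~\ref{thmMain}, which requires finite topology. But nothing so far gives you bounded curvature: the Gauss equation yields $K_\Sigma=-1+\det A_\Sigma$, and $\det A_\Sigma$ can be arbitrarily negative when $\|A_\Sigma\|$ is unbounded --- which, by item~\ref{four} itself, is exactly what happens if $\Sigma$ has infinite topology. So you are assuming bounded curvature to deduce finite topology, while the theorem's own content is that infinite topology forces unbounded curvature. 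The paper avoids this entirely: once item~\ref{one} gives finite area, item~\ref{two} follows directly from $K_\Sigma\le 0$ and area comparison --- a divergent sequence with $I_\Sigma(p_n)\ge\varepsilon$ would yield infinitely many disjoint intrinsic $\varepsilon$-balls each of area $\ge\pi\varepsilon^2$, contradicting finite area. No topology hypothesis is needed, and items~\ref{three} and~\ref{four} then split cleanly into the finite-topology case (where Theorem~\ref{thmMain}\ref{iC} applies) and the infinite-topology case (where $\chi(\Sigma)=-\infty$ and Cohn--Vossen gives~\eqref{totcurv2} trivially).

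For item~\ref{one}, your sketch is in the right spirit but the paper's mechanism is more concrete than ``monotonicity-type estimates'': one computes $\Delta_\Sigma(R\circ\varphi)\le H^2-1<0$ for the horospherical distance $R$ in each cusp, integrates, and applies the divergence theorem to get $\A(E(t))\le\frac{1}{1-H^2}\len(\Gamma_1)$. For item~\ref{six}, your outline invokes ``stability/curvature-estimate machinery'' to control $\|A_\Sigma\|$ on an annular end, but the surface is not assumed stable and no such estimate is available off the shelf; the paper instead obtains the curvature bound (Proposition~\ref{propbound}) by a point-picking blow-up argument, showing that unbounded $\|A_\Sigma\|$ would produce infinitely many disjoint regions of definite negative total curvature, contradicting finite total curvature of the annulus. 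The asymptotic model is then identified not by a direct classification of complete $H$-surfaces asymptotic to a point, but by a blow-\emph{down} under the hyperbolic dilations $\sigma_t$: the limit leaves are shown to be invariant under a one-parameter parabolic group, and Gomes' classification of such surfaces in $\mathbb{H}^3$ forces them to be the tilted planes $\mathcal{P}_H^\pm$, after a separate argument rules out the exceptional profile. Your sketch does not surface either of these two mechanisms.
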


The totally umbilic annuli described in the
item~\ref{six} of last theorem are properly
embedded annular ends in $N$
whose lifts to
hyperbolic $3$-space are contained in
equidistant surfaces to totally geodesic planes; see the discussion
in Section~\ref{subAsymp} for a complete description.

An immediate consequence of item~\ref{two} of
Theorem~\ref{thmAsymp} and of item~\ref{iD} of Theorem~\ref{thmMain} is the following
corollary.

\begin{corollary} Let $N$ be a hyperbolic manifold of
finite volume and let $H\in [0,1)$. Then a complete immersed surface
$\Sigma$ in $N$ with finite topology and mean curvature function
$\abs{H_\S} \leq H$ is proper if and only
$I_\S^\infty(e) = 0$ for each end $e$ of $\S$.
\end{corollary}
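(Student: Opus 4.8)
The plan is to prove the two implications separately; each reduces, after a brief check of hypotheses, to one of the two main theorems.

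For the direction ``$I_\Sigma^\infty(e) = 0$ for every end $e$ $\Rightarrow$ $\varphi$ is proper'', I would simply invoke item~\ref{iD} of Theorem~\ref{thmMain}. To see that it applies, note that $N$ is hyperbolic, so $K_N \equiv -1$ and we may take $a = 1$; the bound $\abs{H_\Sigma} \leq H < 1 = a$ then matches the hypothesis $\abs{H_\varphi} \leq a$, and $\Sigma$ has finite topology by assumption. Hence item~\ref{iD} yields properness of $\varphi$ directly. (Finiteness of $\vol(N)$ plays no role for this direction.)

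For the converse, ``$\varphi$ proper $\Rightarrow$ $I_\Sigma^\infty(e) = 0$ for every end $e$'', once $\varphi$ is proper the surface $\Sigma$ is a complete, properly immersed surface in the finite-volume hyperbolic manifold $N$ with $\abs{H_\Sigma} \leq H$, so Theorem~\ref{thmAsymp} applies; item~\ref{two} of that theorem gives $I_\Sigma(p_n) \to 0$ for every divergent sequence $\{p_n\} \subset \Sigma$. Now fix an end $e$ of $\Sigma$. Since $\Sigma$ has finite topology, $e$ admits a one-ended annular representative $E$, and since $\Sigma$ has nonpositive Gaussian curvature (again by Theorem~\ref{thmMain}), the remark following Definition~\ref{defAs} gives $\lim_{n\to \infty} I_\Sigma(p_n) = I_\Sigma^\infty(e)$ for any divergent sequence $\{p_n\}$ in $E$. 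Choosing such a sequence, which is also divergent in $\Sigma$, and combining with item~\ref{two} forces $I_\Sigma^\infty(e) = 0$.

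Because each direction is a one-line application of an already-proved theorem, I do not expect a genuine obstacle here. The only points deserving care are the normalization $a = 1$ (so that the strict bound $H < 1$ fits the non-strict hypothesis $\abs{H_\varphi} \leq a$ of Theorem~\ref{thmMain}) and the use of finite topology to pass from the abstract quantity $I_\Sigma^\infty(e)$ to the limit of $I_\Sigma$ along a divergent sequence in an annular end representative.
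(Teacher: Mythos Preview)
Your proposal is correct and follows exactly the paper's approach: the corollary is stated in the paper as ``an immediate consequence of item~\ref{two} of Theorem~\ref{thmAsymp} and of item~\ref{iD} of Theorem~\ref{thmMain}'', and you have spelled out precisely those two applications, together with the routine verification of hypotheses (taking $a=1$ and using finite topology / nonpositive curvature to identify $I_\Sigma^\infty(e)$ with the limit along a divergent sequence).
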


We remark that the hypothesis $H < 1$ in
the statement of Theorem~\ref{thmAsymp} is a necessary one.
Proposition~\ref{rmksomething} shows that
for any $H\geq 1$, every complete hyperbolic
3-manifold $N$ of finite volume admits a properly immersed, complete annulus $\S$
of constant mean curvature $H$ with positive injectivity radius and infinite area.

The paper is organized as follows.
In Section~\ref{secArea}
we prove isoperimetric inequalities for certain
compact surfaces with boundary in Hadamard $3$-manifolds.
These isoperimetric inequalities yield area estimates which are applied in Section~\ref{secMain}
to prove Theorem~\ref{thmMain}. In Section~\ref{secAsymp} we prove
Theorem~\ref{thmAsymp}.

\section{An isoperimetric inequality in Hadamard manifolds.}\label{secArea}

In this section we obtain an isoperimetric inequality
for certain hypersurfaces in Hadamard manifolds, see Theorem~\ref{thmCyl} below.
By Hadamard manifold we mean a simply connected manifold with nonpositive
sectional curvature.

\begin{theorem}\label{thmCyl} Let $N$ be a Hadamard manifold
of dimension 3 with sectional curvature
$K_N\leq -a^2\leq0$ and let $\Gamma$ be a complete geodesic
of $N$. Given $r>0$, there exists a
$C = C(a,r)>0$ such that every smooth, immersed, compact orientable
surface $\Sigma\subset N$ with mean curvature $\abs{H_\Sigma}\leq a$
and that
stays at a finite distance less than $r$ from $\Gamma$, satisfies
\begin{equation}\label{eqisop}
\A(\Sigma) \leq C\ \len(\partial \Sigma).
\end{equation}
\end{theorem}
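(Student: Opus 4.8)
The plan is to exploit the fact that $\Sigma$ lives in the $r$-tubular neighborhood of a complete geodesic $\Gamma$, which is a region that, by comparison with the model space of constant curvature $-a^2$, has a natural "cylindrical" coordinate system in which a suitable calibration-type argument can be run. First I would fix Fermi coordinates $(t,\rho,\theta)$ around $\Gamma$, where $t$ is arclength along $\Gamma$, $\rho\in[0,r)$ is the distance to $\Gamma$, and $\theta$ parametrizes the normal circles; by the Hessian comparison theorem applied to $\rho$ (using $K_N\le -a^2$), the distance function $\rho$ from $\Gamma$ is convex and its level sets (the equidistant tubes $\partial B_\rho(\Gamma)$) have mean curvature bounded below by a positive constant depending only on $a$ — in fact each principal curvature of the tube $\{\rho=\rho_0\}$ is at least $a\tanh(a\rho_0)$ in the $\theta$-direction and at least $a\coth(a\rho_0)$... wait, more carefully: the $\theta$-direction curvature is $\ge \coth$-type and the $t$-direction is $\ge \tanh$-type, so the \emph{sum} of principal curvatures of the tube is bounded below by $2a\tanh(a\rho)$, hence is uniformly positive away from $\Gamma$ and still controllable near $\Gamma$.

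The key step is then a flux/divergence argument. Consider on $N$ the vector field $X=\nabla f(\rho)$ for a well-chosen increasing function $f$, or more directly work with the radial field $\partial_\rho$ itself; one computes $\operatorname{Div}_\Sigma(\partial_\rho)$ along $\Sigma$ and integrates. Writing $\nu$ for a local unit normal to $\Sigma$, one has $\operatorname{Div}_\Sigma(\partial_\rho) = \operatorname{Div}_N(\partial_\rho) - \langle \nabla_\nu \partial_\rho,\nu\rangle - 2H_\Sigma\langle \partial_\rho,\nu\rangle$. The first two terms together equal the mean curvature of the equidistant tube through the point (a "trace minus normal-normal" of the Hessian of $\rho$), which by the comparison above is $\ge 2a\tanh(a\rho) - (\text{a bounded correction near }\rho=0)$; the last term is bounded in absolute value by $2|H_\Sigma|\le 2a$. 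Choosing $f$ so that $\nabla f(\rho) = \phi(\rho)\partial_\rho$ with $\phi$ vanishing to first order at $\rho=0$ and $\phi\equiv$const for $\rho$ near $r$ removes the singularity of Fermi coordinates on the axis and still yields a pointwise inequality of the form $\operatorname{Div}_\Sigma(\nabla f(\rho)) \ge c(a,r) > 0$ on all of $\Sigma$, since $\tanh(a\rho)$ dominates once $\rho$ is bounded away from $0$ and the chosen weight $\phi$ handles the near-axis region. Integrating over $\Sigma$ and applying the divergence theorem, $c(a,r)\,\A(\Sigma)\le \int_\Sigma \operatorname{Div}_\Sigma(\nabla f(\rho)) = \int_{\partial\Sigma}\langle \nabla f(\rho),\eta\rangle \le \sup_{[0,r)}|f'|\cdot \len(\partial\Sigma)$, where $\eta$ is the outward conormal, giving \eqref{eqisop} with $C = \sup|f'|/c(a,r)$.

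The main obstacle is making the pointwise inequality $\operatorname{Div}_\Sigma(\nabla f(\rho))\ge c(a,r)>0$ genuinely uniform, including at points of $\Sigma$ lying on or very close to the axis $\Gamma$, where $\rho$ is not smooth and the tube mean curvature blows up like $1/\rho$ in the $\theta$-direction (good sign) but the Fermi frame degenerates. This is handled by the explicit choice of the weight function $f$: one needs $f(\rho) = \int_0^\rho \phi(s)\,ds$ with $\phi(0)=0$, $\phi$ smooth, $\phi>0$ on $(0,r)$, arranged so that $\phi'(\rho) + \phi(\rho)(\text{tube mean curvature in the }t\text{-direction}) - 2a\,\phi(\rho) \ge c > 0$; the positive $\theta$-direction contribution $\phi(\rho)\cdot a\coth(a\rho)$, which is $\ge \phi(\rho)/\rho$, compensates the loss of the $2a\phi$ term near the axis provided $\phi(\rho)/\rho$ is bounded below, i.e. $\phi'(0)>0$. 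A second, more technical point is that $\Sigma$ is only \emph{immersed}, not embedded, so "distance to $\Gamma$" must be interpreted pointwise via the immersion and the divergence theorem applied on the abstract surface with the pulled-back field $\varphi^*(\nabla f(\rho))$ — this is routine once one notes that $\rho\circ\varphi$ is smooth wherever $\varphi$ avoids $\Gamma$, and the axis is a measure-zero obstruction that can be circumvented by a limiting argument or by the smoothing built into $\phi$. Once these are in place the constant $C(a,r)$ is extracted explicitly from the model-space comparison functions $\tanh$ and $\coth$.
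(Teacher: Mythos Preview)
Your strategy coincides with the paper's: both compute $\Delta_\Sigma(f\circ R)$ for a suitable radial profile $f$ of the distance $R$ to $\Gamma$, invoke Hessian comparison to get a uniform positive lower bound, and integrate via the divergence theorem to conclude $\A(\Sigma)\le C\,\len(\partial\Sigma)$. In the case $a=0$ the paper takes $f(x)=x^2$ and obtains $\Delta_\Sigma R^2\ge 2$; in the case $a>0$ it takes $f_k(x)=-k/(a\cosh(ax)^{1/k})$ for $k$ large depending on $r$.

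There is, however, a genuine gap in your pointwise estimate. You assert that $\operatorname{Div}_N(\partial_\rho)-\langle\nabla_\nu\partial_\rho,\nu\rangle$ ``equals the mean curvature of the equidistant tube through the point'' and is $\ge 2a\tanh(a\rho)$. But this quantity is $\sum_{i=1}^{2}\Hess(\rho)(E_i,E_i)$ with $\{E_1,E_2\}$ an orthonormal frame tangent to $\Sigma$, not to the tube, so it depends on the angle $\beta$ between $\nu$ and $\nabla\rho$. Worse, the hypothesis $K_N\le -a^2$ gives only \emph{lower} bounds on the eigenvalues of $\Hess(\rho)$, so your ``trace minus normal--normal'' rewriting cannot be bounded from below without an upper bound on $\Hess(\rho)(\nu,\nu)$, and no such bound is available from a one-sided curvature assumption. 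The paper avoids this by expanding each $E_i$ in the eigenframe $\{\nabla\rho,\partial_s,\partial_\theta\}$ and using the eigenvalue lower bounds $\mu_s(\rho)=a\tanh(a\rho)$, $\mu_\theta(\rho)=a\coth(a\rho)$ directly to obtain
\[
\sum_{i=1}^{2}\Hess(\rho)(E_i,E_i)\ \ge\ \mu_s+\mu_\theta\cos^2\beta,
\]
so the sharp uniform lower bound is $\mu_s$, not $2\mu_s$. The resulting lower bound for $\Delta_\Sigma(f\circ\rho)$ is a quadratic in $\cos\beta$, and the cross term $2H_\Sigma f'\cos\beta$ is absorbed by completing the square and invoking the identity $\mu_s\mu_\theta=a^2\ge H_\Sigma^2$; this is precisely what dictates the paper's ODE choice for $f$. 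Your cruder bound $\abs{2H_\Sigma f'\cos\beta}\le 2af'$ can in fact be pushed through (for instance with $f'(\rho)=e^{2a\rho}-1$), but it still requires the correct $\beta$-dependent Hessian estimate above rather than the tube's mean curvature, and one must then check both endpoints $\cos^2\beta\in\{0,1\}$ separately.
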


\begin{proof}

Let $\Gamma \subset N$ be a complete geodesic, $R = d_N(\cdot,\Gamma)$
be the ambient distance function to $\Gamma$
and $r>0$ be given.
We will prove the theorem using the following claim:

\begin{claim}\label{claimLemma}
Fixed $r>0$,
there exist a smooth function $f\colon [0,r]\to \R$ and constants $C_1,\,C_2 > 0$,
whose construction depends uniquely on the constants $r$ and $a$, such that for all $x \in [0,r]$,
\begin{equation}\label{C1}
0\leq f^\prime(x) \leq C_1
\end{equation}

\noindent holds and that, for every smooth compact surface $\S$ immersed in $R^{-1}([0,r))$
with mean curvature function satisfying $\abs{H_\S}\leq a$, then
\begin{equation}\label{C2}
\Delta_\S(f\circ R) \geq C_2, \quad \text{in}\ \ N.
\end{equation}
\end{claim}

Before we prove the claim, we apply it to obtain the constant $C$
of Theorem~\ref{thmCyl}.
First, note that $R$ is differentiable in $N\setminus \Gamma$,
with $\norma{\grad(R)} = 1$ in $N\setminus \Gamma$. Let $f$ be the function
provided by the claim and let $\S$ be a surface satisfying the hypothesis of the
theorem. Denoting by $\nu$ the conormal vector field along $\partial \S$,
we can apply the divergence theorem
to obtain that
\begin{equation}\label{part1w}
\inte{\S} \Delta_\S(f\circ R) =\inte{\partial \S} (f^\prime\circ R)
\forma{\grad(R),\,\nu} \leq C_1 \, \len(\partial \S),
\end{equation}

\noindent where last inequality comes from \eqref{C1}.
On the other hand, \eqref{C2} implies
\begin{equation}\label{part2w}
\inte{\S}\Delta_\S( f\circ R) \geq C_2 \, \A(\S).
\end{equation}

\noindent By defining $C = C_1/C_2$, \eqref{part1w} and \eqref{part2w}
show that \eqref{eqisop} holds for $\S$, thereby providing the constant $C$
of Theorem~\ref{thmCyl}.

Next, we prove the claim.

\begin{proof}[Proof of Claim~\ref{claimLemma}]
Let $\S$ be as in the claim and let
$f\colon [0,\,r]\rightarrow \R$ be some smooth function, to be chosen a posteriori,
such that $f^\prime\geq0$.
Since $R$ is not smooth in points of $\Gamma$, we
will first show that~\eqref{C2} holds in
$N\setminus \Gamma$; however, our choice of $f$ will be of an even function, then
$f\circ R$ will be smooth in $N$ and~\eqref{C2} will hold everywhere by
continuity.

Consider $\{E_1,\,E_2\}$ an orthogonal frame to $\Sigma$ and let $\eta$
be a normal unitary vector field orienting $\Sigma$.
A straightforward calculation shows that
\begin{eqnarray}
\Delta_\Sigma(f\circ R) \hspace{-3pt}&\hspace{-8pt} = \hspace{-8pt}&\hspace{-3pt}
(f^\prime\circ R) \soma{i=1}{2}\Hess(R) (E_i,\,E_i) \hspace{-2pt}+\hspace{-2pt}
(f^{\prime\prime}\circ R)\soma{i=1}{2}\forma{\grad(R),\,E_i}^2\nonumber\\
&&\vphantom{\displaystyle\sum_{i=1}^ne_i}
+2H_\Sigma (f^\prime\circ R) \forma{\grad(R),\,\eta},\label{eq1}
\end{eqnarray}
\noindent where we denote $\grad = \grad_N$ and
$\Hess(R)(X,\,Y) = \forma{\n_X\grad(R),\,Y}$, respectively,
the gradient and the Hessian with respect to the ambient space.

As $R$ is the distance function to the geodesic $\Gamma$,
the Hessian of $R$ satisfies a matrix valued Riccati type differential equation,
where the independent term is the curvature tensor of $N$.
Then, since $N$ has sectional
curvature satisfying $K_N\leq -a^2\leq 0$,
it follows from the comparison principle to the Riccati equation,
a Hessian comparison principle for the distance function $R$,
given below in \eqref{Hesscomp} (see, for
instance Proposition 5.4 of~\cite{esch1} or the
main result of~\cite{eschHein1}), which we now describe.
For $\rho > 0$, let $C_\rho = R^{-1}(\{\rho\})$ be the
geodesic cylinder of radius $\rho$
around $\Gamma$ and $S_\rho$ be a geodesic sphere of radius $\rho$ centered at a
point $\Gamma(s)$ of $\Gamma$.
Let $\partial_\theta$ be a unitary vector field tangent to
$C_\rho\cap S_\rho$ and let
$\partial_s$ be unitary such that
$\{\grad(R),\,\partial_s,\,\partial_\theta\}$ is an orthonormal frame in $N$,
away from $\Gamma$; see Figure~\ref{figHess}. Then
\begin{figure}
\centering
\includegraphics[width=0.7\textwidth]{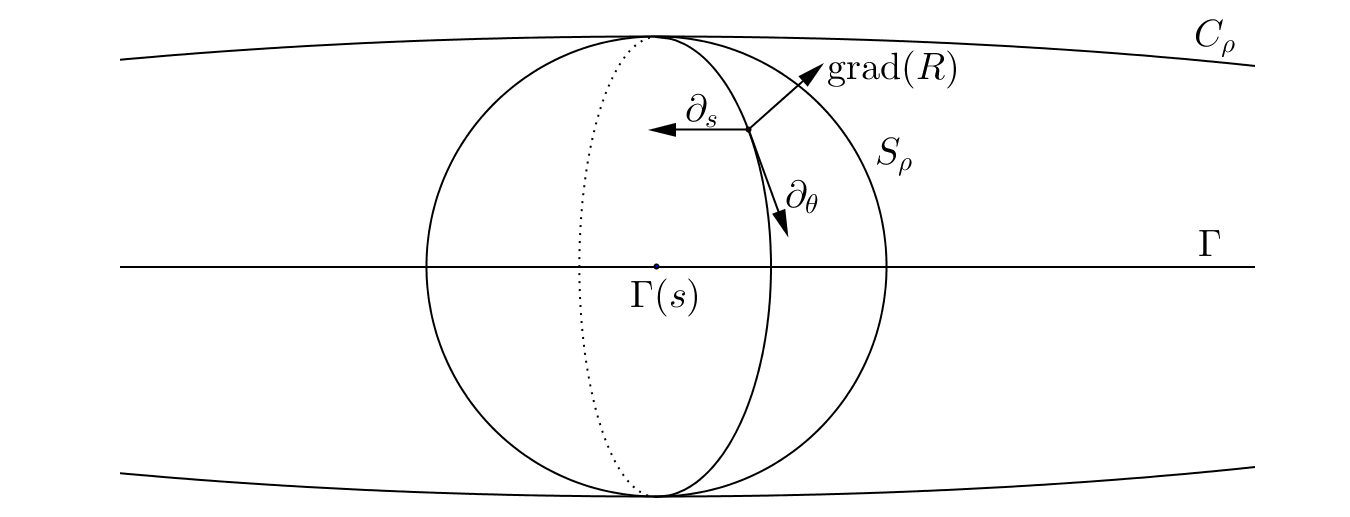}
\caption{The frame $\{\partial_\theta,\,\partial_s,\,\grad(R)\}$.
\label{figHess}}
\end{figure}
\begin{equation}
\Hess (R)(\partial_\theta,\,\partial_\theta) \geq \mu_\theta(R), \quad
\Hess (R)(\partial_s,\,\partial_s) \geq \mu_s(R),\label{Hesscomp}
\end{equation}

\noindent where $\mu_\theta,\,\mu_s$ are the functions that
realize equalities in~\eqref{Hesscomp}
in the spaces of constant sectional curvature $-a^2$, and are defined by

\begin{equation}\label{musmut}
\mu_\theta(x) = \left\{ \begin{array}{l}
a\,\coth(ax),\ \text{if} \ a > 0\\
1/x,\ \text{if}\ a = 0
\end{array}\right., \quad
\mu_s(x) = \left\{ \begin{array}{l}
a\,\tanh(ax),\ \text{if} \ a > 0\\
0,\ \text{if}\ a = 0
\end{array}\right. .
\end{equation}

We use~\eqref{Hesscomp} to estimate $\Hess(R)(E_i,\,E_i)$. Since
$$
\forma{\n_{\partial_s}\grad(R),\,\partial_\theta} = 0 =
\forma{\n_{\partial_\theta}\grad(R),\,\partial_s}
$$
and also
$$
E_i = \forma{E_i,\,\grad(R)}\grad(R) +
\forma{E_i,\,\partial_s}\partial_s+\forma{E_i,\,\partial_\theta}\partial_\theta,
$$

\noindent then~\eqref{Hesscomp} gives
\begin{eqnarray}
\Hess(R) (E_i,\,E_i) \hspace{-3pt}&\hspace{-3pt} =\hspace{-3pt} &\hspace{-3pt}
\forma{E_i,\,\partial_s}^2\Hess(R)(\partial_s,\,\partial_s) +
\forma{E_i,\,\partial_\theta}^2\Hess(R)(\partial_\theta,\,\partial_\theta)\nonumber\\
& \hspace{-3pt}\geq\hspace{-3pt} & \forma{E_i,\,\partial_s}^2\mu_s(R) +
\forma{E_i,\,\partial_\theta}^2\mu_\theta(R).\label{comp1}
\end{eqnarray}

\noindent We sum~\eqref{comp1} for $i=1,\,2$ and use that $\mu_s < \mu_\theta$
to obtain a lower estimate
for the first term of~\eqref{eq1}. We suppress the variable $R$
in the functions $\mu_s$ and $\mu_\theta$
to simplify the notation.
\begin{eqnarray}
\soma{i=1}{2}\Hess(R)(E_i,\,E_i)&\geq&\mu_s \soma{i=1}{2}\forma{E_i,\,\partial_s}^2 +
\mu_\theta \soma{i=1}{2}\forma{E_i,\,\partial_\theta}^2\nonumber\\
& = & \mu_s - \mu_s\forma{\eta,\,\partial_s}^2+
\mu_\theta -\mu_\theta \forma{\eta,\,\partial_\theta}^2\nonumber\\
& \geq & \mu_s + \mu_\theta -\mu_\theta\left(\forma{\eta,\,\partial_s}^2+
\forma{\eta,\,\partial_\theta}^2\right)\nonumber\\
& = & \mu_s + \mu_\theta\forma{\eta,\,\grad(R)}^2. \label{est2}
\end{eqnarray}

Let $\beta$ be the angle between $\grad(R)$ and $\eta$. Since
$f^\prime \geq 0$, then \eqref{est2} and \eqref{eq1} imply that
\begin{eqnarray}
\Delta_\Sigma(f\circ R) & \geq & f^\prime\left(\mu_s + \mu_\theta \cos^2(\beta)\right)+
 f^{\prime\prime}(1-\cos^2(\beta))+2H_\Sigma f^\prime \cos(\beta)\nonumber\\
& = & (f^\prime\mu_\theta - f^{\prime\prime})\cos^2(\beta) + 2H_\Sigma f^\prime\cos(\beta)
+ f^\prime\mu_s +f^{\prime\prime}.\label{eqazero}
\end{eqnarray}

At this point, we are able to finish the proof in the case $a= 0$,
where $\mu_s(R) = 0$, $\mu_\theta(R) = 1/R$ and
$H_\Sigma = 0$. By choosing $f(x) = x^2$, \eqref{eqazero} gives
\begin{equation*}
\Delta_\Sigma(R^2) \geq 2,
\end{equation*}

\noindent and we can set $C_1 = f^\prime(r) = 2r$ and $C_2 = 2$,
which proves the claim and gives the constant $C(0,r)=r$ in the theorem.

Next, assume that $a > 0$. If we suppose that
$f^\prime\mu_\theta - f^{\prime\prime} > 0$
(this will be shown to hold for our choice of $f$), algebraic
manipulation in \eqref{eqazero} gives
\begin{eqnarray}
\Delta_\Sigma(f\circ R) & \geq & (f^\prime\mu_\theta -
f^{\prime\prime})\left(\cos(\beta) + H_\Sigma\frac{f^\prime}{f^\prime\mu_\theta -
f^{\prime\prime}}\right)^2\nonumber\\
&&-
H_\Sigma^2\frac{(f^\prime)^2}{f^\prime\mu_\theta - f^{\prime\prime}}
+ f^\prime\mu_s + f^{\prime\prime}\nonumber\\
& \geq & f^\prime\mu_s + f^{\prime\prime}
- H_\Sigma^2\frac{(f^\prime)^2}{f^\prime\mu_\theta - f^{\prime\prime}}.\label{eqsemifinal}
\end{eqnarray}

Using that $\mu_s \mu_\theta = a^2$ and $H_\Sigma^2 \leq a^2$,
we obtain the following estimate

\begin{eqnarray}
\Delta_\Sigma(f\circ R) & \geq &\frac{(f^\prime)^2(a^2-H_\Sigma^2) +
 f^{\prime\prime}(f^\prime\mu_\theta - f^{\prime\prime}
 - f^\prime\mu_s)}{f^\prime\mu_\theta - f^{\prime\prime}}\nonumber\\
& \geq &f^{\prime\prime}\left(1 - \frac{f^\prime\mu_s}{f^\prime\mu_\theta
- f^{\prime\prime}}\right).\label{ineqmain}
\end{eqnarray}

We now make our choice of $f$. Fix $k \in \N$
and let $f_k\colon \R \rightarrow \R$ be a solution to
\begin{equation}\label{edo}
f^\prime(x)\mu_s(x) = \frac{k}{k+1}(f^\prime(x)\mu_\theta(x) - f^{\prime\prime}(x)),
\end{equation}
\noindent which can be explicitly expressed by
\begin{equation}\label{deff}
f_k(x) = - \frac{k}{a(\cosh(ax))^{\frac{1}{k}}}.
\end{equation}

\noindent The derivatives of $f_k$ are

\begin{eqnarray}
f_k^\prime(x)& = &\frac{\tanh(ax)}{(\cosh(ax))^\frac{1}{k}}, \label{fprime}\\
f_k^{\prime\prime}(x)& = &
\frac{a}{k(\cosh(ax))^{\frac{2k+1}{k}}}[k+1-\cosh^2(ax)],\label{fdprime}
\end{eqnarray}

\noindent and, by~\eqref{edo} and~\eqref{fprime}
we conclude that $f_k^\prime\mu_\theta- f_k^{\prime\prime} > 0$;
hence,~\eqref{ineqmain} gives the inequality
\begin{equation}\label{defg}
\Delta_\S(f\circ R) \geq a\frac{k+1-\cosh^2(aR)}{k(k+1)(\cosh(aR))^{\frac{2k+1}{k}}}.
\end{equation}

Let $g_k\colon [0,r]\to \R$ be defined by the expression
in the right hand side of \eqref{defg}, i.e.,
\begin{equation*}
g_k(x) = a\frac{k+1-\cosh^2(ax)}{k(k+1)(\cosh(ax))^{\frac{2k+1}{k}}}.
\end{equation*}
To prove the claim
it suffices to find a lower positive
bound for $g_k$, as $f_k^\prime$ is nonnegative and bounded
by $C_1 = 1$. Note that $g_k(x) > 0$ if and only if
\begin{equation*}
\cosh^2(ax) < k+1.
\end{equation*}

\noindent Choose $n \in \N$ such that $\cosh^2(ar) < n+1$. Then $f_n$
is a function that satisfies \eqref{C1} in $[0,\,r]$, for $C_1 = 1$.
A direct calculation of the derivative of $g_n$ shows
that it is a decreasing function in $[0,r]$; thus,
if we set $C_2 = g_n(r) > 0$,
it follows that $g_n(x) \geq C_2 > 0$ for $x \in [0,r]$.
Since $n$ was chosen independently of $\Sigma$ and
$C_1,\,C_2$ depend uniquely on $f_n$, this completes the proof
of the claim in the case when $a > 0$.\end{proof}

\noindent As explained immediately after the statement of
Claim~\ref{claimLemma}, the theorem
is now proved. \end{proof}

\begin{remark}{\rm
The hypothesis $\abs{H_\Sigma}\leq a$ in Theorem~\ref{thmCyl}
cannot be improved. Indeed, if $N$ is simply connected, of constant
sectional curvature $K_N = -a^2\leq 0$, then, for each $H > a$, there exists a
geodesic cylinder
of constant mean curvature $H$, containing compact subdomains
with constant boundary length and with arbitrarily large area.
}\end{remark}

The next corollary is an immediate consequence of
Theorem~\ref{thmCyl}. One only needs to check that if
the boundary of the compact surface $\S$ given below has length
$L$, then $\S$ is contained in a solid geodesic cylinder of radius
$L/2$, which follows from the mean curvature comparison principle.

\begin{corollary}[Area estimate for surfaces
with two boundary components]\label{corArEst}
Let $N$ be a Hadamard $3$-manifold with sectional
curvature $K_N\leq -a^2\leq0$. Then, for
each $L > 0$, there is a constant $C = C(a,L)$
such that the following holds.

Let $\Sigma \subset N$ be a compact surface
immersed in $N$ with $\abs{H_\Sigma}\leq a$. If the boundary of
$\S$ consists on one or two components
and has total length at most $L$, then
\begin{equation}\label{areaest}
\A(\Sigma) \leq C\, L.
\end{equation}

\end{corollary}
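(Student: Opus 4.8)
The plan is to deduce Corollary~\ref{corArEst} from Theorem~\ref{thmCyl} by showing that a compact surface $\S$ with $\abs{H_\S}\le a$ whose boundary has total length at most $L$ must be contained in a solid geodesic cylinder of radius $L/2$ around some complete geodesic $\Gamma$; once this is established, Theorem~\ref{thmCyl} applies with $r=L/2$ and yields $\A(\S)\le C(a,L/2)\,L$, so setting $C(a,L)=C(a,L/2)$ finishes the proof. The construction of $\Gamma$ will be different in the one-boundary-component and two-boundary-component cases, but in both cases the containment is a consequence of the mean curvature comparison principle for geodesic balls and solid cylinders in a Hadamard manifold with $K_N\le -a^2$.

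First I would treat the case of two boundary components $\partial\S=\g_1\cup\g_2$. Pick points $p_i\in\g_i$ and let $\Gamma$ be the complete geodesic through $p_1$ and $p_2$ (in a Hadamard manifold this geodesic is unique). Let $\rho=d_N(\cdot,\Gamma)$ be the distance to $\Gamma$ and suppose for contradiction that $\S$ is not contained in the solid cylinder $\{\rho\le L/2\}$. Then the restriction $\rho|_\S$ attains its maximum at some interior point $q$ of $\S$ with $\rho(q)>L/2$. Since $\len(\partial\S)\le L$ and $p_1,p_2$ lie on $\Gamma$, every point of $\partial\S$ is within distance $L/2$ of $\Gamma$, so the maximum is genuinely interior. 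Now compare $\S$ near $q$ with the geodesic cylinder $C_{\rho(q)}=\rho^{-1}(\rho(q))$: the cylinder $C_{\rho(q)}$ is tangent to $\S$ at $q$ and lies locally on one side of it, and by the estimates leading to~\eqref{Hesscomp}, $C_{\rho(q)}$ has principal curvatures bounded below (in absolute value) by $\mu_s(\rho(q))=a\tanh(a\rho(q))$ when $a>0$ (and by $0$ when $a=0$, in which case a geodesic ball argument replaces the cylinder). But this forces $\abs{H_\S(q)}\ge a\tanh(a\rho(q))$, which can be made larger than $a$ only if $\rho(q)$ is large; more carefully, one uses that a maximum of $\rho|_\S$ at an interior point forces $\Hess_\S(\rho)(q)\le 0$, while equations~\eqref{eq1} and~\eqref{est2} applied to $f=\mathrm{id}$ give $0\ge\Delta_\S\rho(q)\ge\mu_s(\rho(q))+\mu_\theta(\rho(q))\cos^2\beta+2H_\S\cos\beta\ge\mu_s(\rho(q))-a$ after minimizing the quadratic in $\cos\beta$ and using $\mu_s\mu_\theta=a^2$, $\abs{H_\S}\le a$. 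Hence $a\tanh(a\rho(q))\le a$, which is automatic — so I need the sharper bound: the correct contradiction comes from the geodesic-ball comparison, namely $\S$ is contained in the geodesic ball of radius $L/2$ centered at any boundary point, since a tangent interior maximum of the distance-to-a-point function would force mean curvature $\ge a\coth(a\cdot(\text{radius})\,)>a$. This is the mechanism the corollary's preamble refers to, and it is cleaner to run it with balls than with the cylinder directly.

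So the streamlined argument is: fix $p_0\in\partial\S$; because $\partial\S$ is connected-or-two-components of total length $\le L$, each boundary curve has length $\le L$, hence lies in the geodesic ball $B(p_0',L/2)$ around one of its own points; I then claim $\S\subset B(p_0',L/2)$ as well. If not, the function $d_N(\cdot,p_0')|_\S$ has an interior maximum at a point $q$ with $d_N(q,p_0')>L/2$, and at such a point the geodesic sphere $S_{d_N(q,p_0')}(p_0')$ is tangent to $\S$ from outside; the Hessian comparison in a Hadamard manifold with $K_N\le -a^2$ gives that this sphere has all principal curvatures $\ge a\coth(a\,d_N(q,p_0'))> a$, forcing $\abs{H_\S(q)}\ge a\coth(a\,d_N(q,p_0'))>a$, a contradiction. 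Therefore $\S$ lies in a ball of radius $L/2$, hence within distance $L/2=:r$ of the geodesic $\Gamma$ through its center (in any direction), and Theorem~\ref{thmCyl} applies. The main obstacle is getting the comparison geometry exactly right at the interior maximum — in particular, being careful that the tangency of the geodesic sphere is from the correct side so that the comparison inequality points the right way — but this is the standard maximum-principle-with-distance-functions argument, and the quantitative input is precisely the Hessian comparison~\eqref{Hesscomp} already invoked in the proof of Theorem~\ref{thmCyl}.
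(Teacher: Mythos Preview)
Your overall strategy---reduce to Theorem~\ref{thmCyl} by showing $\S$ lies in a solid cylinder of radius $L/2$ via a mean-curvature/maximum-principle argument---is exactly the paper's approach. But there are two gaps in the execution.

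First, you abandoned the cylinder comparison prematurely. At an interior maximum $q$ of $\rho=d_N(\cdot,\Gamma)$ restricted to $\S$, the intrinsic gradient $\nabla_\S\rho$ vanishes, so $\nabla\rho$ is normal to $\S$ at $q$; that forces $|\cos\beta|=1$, not an arbitrary value in $[-1,1]$. There is no minimization over $\cos\beta$ to perform. Plugging $\cos^2\beta=1$ into the inequality coming from~\eqref{eq1}--\eqref{est2} with $f=\mathrm{id}$ gives
\[
0\ \ge\ \Delta_\S\rho(q)\ \ge\ \mu_s(\rho(q))+\mu_\theta(\rho(q))-2\,|H_\S|\ \ge\ \mu_s+\mu_\theta-2a,
\]
and $\mu_s(r)+\mu_\theta(r)=2a\coth(2ar)>2a$ when $a>0$ (and $=1/r>0$ when $a=0$), which is already the desired contradiction. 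Equivalently, the geodesic cylinder $C_r$ has mean curvature at least $a\coth(2ar)>a$, so the tangency at $q$ violates $|H_\S|\le a$ directly.

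Second, your ball argument as written only handles one boundary component. If $\partial\S=\g_1\cup\g_2$ with the $\g_i$ far apart, then $\partial\S$ need not lie in any single ball of radius $L/2$, so your maximum-principle step ``if not, $d_N(\cdot,p_0')|_\S$ has an interior maximum'' fails: the maximum could be on the other boundary component. The ball argument does show each $\g_i\subset B(p_i,L/2)$ with $p_i\in\g_i$, hence $\partial\S$ lies within distance $L/2$ of the geodesic $\Gamma$ through $p_1,p_2$; but to pass from $\partial\S$ to $\S$ you then need precisely the cylinder comparison around $\Gamma$---the argument you discarded. Once you restore that step (with the $|\cos\beta|=1$ observation), the proof is complete and coincides with the paper's.
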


\section{The proof of Theorem~\ref{thmMain}.}\label{secMain}
Throughout this section $N$ will be a complete Riemannian $3$-manifold
with sectional curvature $K_N\leq -a^2\leq 0$,
and $\varphi\colon \Sigma\rightarrow N$ will be an
isometric immersion of a finite topology
surface $\S$ such that
the mean curvature function $H_\varphi$ of the immersion satisfies
$\abs{H_\varphi}\leq a$.
A simple consequence of the Gauss equation is that
the Gaussian curvature function $K_\S$ of $\Sigma$ is nonpositive;
hence,
if $p \in \S$ and $I_\S(p)$ is finite,
then there is a closed geodesic loop based at $p$ in $\S$ of length $2I_\S(p)$
(see, for instance, Proposition~2.12, Chapter~13 of~\cite{doc2}).
Moreover, it follows from the Gauss-Bonnet formula that such loop is homotopically
nontrivial in $\S$. The existence of such loops will be used in the proofs of the next two
propositions.

\begin{proposition}\label{propInjPos} Suppose $N$ is simply connected and
$E \equiv \sn1\times [0,+\infty)$ is a complete,
noncompact Riemannian annulus.
If $\varphi\colon E\rightarrow N$ is an isometric immersion with
$\abs{H_\varphi}\leq a$, then the asymptotic injectivity radius of $E$,
which we denote by $I_E^\infty$, is infinite.
In particular, item~\ref{iA} of Theorem~\ref{thmMain} holds.
\end{proposition}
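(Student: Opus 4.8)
The plan is to argue by contradiction: suppose the asymptotic injectivity radius $I_E^\infty$ is finite, say $I_E^\infty = \rho < \infty$. By the remark following Definition~\ref{defAs} (or Lemma~\ref{lemmaLimitInjRad} in the Appendix, applicable since $K_E \leq 0$), for every divergent sequence $\{p_n\}$ in $E$ we have $I_E(p_n) \to \rho$. Fixing a base point, choose such a divergent sequence $\{p_n\}$ with $p_n$ lying on the ``core circle'' level sets $\sn1 \times \{t_n\}$ for $t_n \to \infty$. For each large $n$, since $I_E(p_n)$ is finite and $K_E \leq 0$, there is a homotopically nontrivial closed geodesic loop $\gamma_n$ based at $p_n$ of length $2 I_E(p_n) \leq 2\rho + 1$ (for $n$ large). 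Because $E$ is an annulus, $\gamma_n$ is freely homotopic to a multiple of the core circle, hence the portion $E_n \subset E$ between two such loops $\gamma_n$ and $\gamma_{n+k}$ (for suitable $k$) is a compact subannulus whose boundary consists of two loops each of length at most $2\rho+1$, so $\len(\partial E_n) \leq 4\rho + 2 =: L$.

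Next I would apply the area estimate. Compose with $\varphi$: the image $\varphi(E_n)$ is a compact immersed surface in the Hadamard manifold $N$ with $\abs{H}\leq a$ and with boundary of length at most $L$ and at most two boundary components, so Corollary~\ref{corArEst} gives $\A(E_n) \leq C(a,L)\, L$, a bound \emph{independent of $n$}. On the other hand, one can produce infinitely many \emph{disjoint} such subannuli $E_{n_1}, E_{n_2}, \dots$ inside $E$ by iterating the construction along the end (choosing successive core loops far enough apart), each with area at least some positive constant bounded below — indeed each $E_{n_j}$ contains a homotopically nontrivial geodesic loop of length $2 I_E(p_{n_j}) \to 2\rho > 0$, and a subannulus of an annulus containing an essential loop of length $\geq \rho$ (for $n$ large enforcing $I_E(p_n) \geq \rho/2 > 0$) has area bounded below. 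Alternatively, and more robustly, summing the fixed upper bound over the infinitely many disjoint pieces contradicts nothing directly — so the cleaner route is: the union $\bigcup_j E_{n_j}$ exhausts a subrepresentative $E'$ of the end, so $\A(E') = \sum_j \A(E_{n_j}) \leq \sum_j C L < \infty$ is \emph{not} the contradiction; rather the contradiction is obtained by noting each $\A(E_{n_j})$ is bounded \emph{below} by a positive constant (from the isoperimetric-type lower bound coming from the essential loop), forcing $\A(E') = \infty$, while the area estimate must also be applied to $E'$ truncated suitably. Let me instead streamline: apply Corollary~\ref{corArEst} to the single subannulus $E(t) \subset E$ bounded by $\gamma$ (a fixed essential geodesic loop near the start of the end) and a far-out essential geodesic loop $\gamma_n$ of length $\leq 2\rho+1$; this gives $\A(E(t_n)) \leq C L$ uniformly, hence the end $E$ has \emph{finite total area}. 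But an annular end with $K \leq 0$, finite area, and core loops of length bounded below by $\rho/2 > 0$ is impossible — by a coarea/Fubini argument the area is at least (loop length)$\times$(length of the end in the transverse direction) $= \infty$.

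I expect the main obstacle to be the step showing that finite area of the end is incompatible with the lower bound on injectivity radius along the end; one must rule out the possibility that the core loops, while having length bounded below, are ``squeezed'' so that the transverse extent of the end has finite length. The resolution is that a divergent sequence $\{p_n\}$ genuinely leaves every compact set, so the intrinsic distance from $p_n$ to $\partial E$ tends to infinity; combined with each $p_n$ lying on an essential loop of length $\geq \rho/2$, a foliation of a subannulus by essential curves (e.g.\ level sets of distance to the boundary, or images of concentric circles) each of length bounded below by a definite constant — here one uses that $K_E \leq 0$ so essential loops cannot be too short, via the systole/injectivity radius being $\geq \rho/2$ — forces the area to diverge, contradicting the uniform bound from Corollary~\ref{corArEst}. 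Finally, item~\ref{iA} of Theorem~\ref{thmMain} follows immediately: each end $e$ of $\S$ (finite topology) has an annular representative $E$ to which Proposition~\ref{propInjPos} applies, giving $I_\S^\infty(e) = I_E^\infty = \infty$.
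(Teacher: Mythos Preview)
Your overall strategy---assume $I_E^\infty=\rho<\infty$, produce homotopically nontrivial geodesic loops $\gamma_n$ of length at most $2\rho+2$, apply Corollary~\ref{corArEst} to the compact subannulus bounded by a fixed $\gamma_1$ and a far-out $\gamma_n$ to get a uniform area bound, and contradict infinite area of $E$---is exactly the paper's route. The place where your argument diverges, and where it has a genuine gap, is the final step: showing that $E$ has infinite area.

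You derive infinite area from the lower bound $I_E\geq \rho/2>0$ on a subend (via a coarea/packing argument). This only works when $\rho>0$; but the contradiction hypothesis $I_E^\infty=\rho<\infty$ includes the case $\rho=0$, and then your loops $\gamma_n$ have length tending to $0$, your systole lower bound evaporates, and none of your suggested mechanisms (coarea over essential curves, foliation by level sets, packing balls of radius $\rho/4$) produce a positive lower area bound. The paper avoids this by establishing infinite area of $E$ \emph{before} the contradiction argument and \emph{independently} of any injectivity-radius assumption: it invokes an extrinsic fact (Theorem~3 and Remark~4 in the appendix of~\cite{frn1}) that for an immersion with $|H_\varphi|\leq a$ into a Hadamard manifold with $K_N\leq -a^2$, intrinsic balls $B_E(p,\ve)$ of some fixed small radius $\ve>0$ have area bounded below by a fixed positive constant. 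Since $E$ is complete, noncompact, with compact boundary, one can pack infinitely many such disjoint balls, forcing $\A(E)=\infty$. This is the missing ingredient; once you have it, the rest of your argument (which is also the paper's) goes through cleanly for all $\rho\in[0,\infty)$.
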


\begin{proof}
An elementary calculation shows that
there is an $\ve>0$ independent of $E$ such that intrinsic
balls $B_E(p,\ve)\subset E-\partial E$
have area greater than some fixed positive constant; see
Theorem~3 and Remark~4 in the appendix of~\cite{frn1}.
In particular, since $E$ is noncompact, complete and has compact boundary, then $E$
has infinite area.

Arguing by contradiction, suppose that
the asymptotic injectivity radius of $E$ is
$I_E^\infty = L \in [0,\, \infty)$. By the definition
of $I_E^\infty$,
there is an intrinsically divergent sequence of points $\{q_n\}_n$ in $ E$
such that $$\lim_{n\to \infty}I_E(q_n)=L.$$
After replacing by a subsequence,
$I_E(q_n)< L+1$,
$\{d_E(q_n,\,\partial E)\}_n$ is increasing,
$d_E(q_1,\partial E) \geq L +1$ and
$d_E(q_n,\,q_{n+1}) \geq 2 L+2$.
Hence, there exist homotopically
nontrivial geodesic loops $\g_n$ with
base points $q_n$ and lengths equal to $2I_E(q_n)< 2L+2$,
which are pairwise disjoint by the triangle inequality.

For $n > 1$, let $E_n$ be the compact annular region of $E$ bounded by $\gamma_{1}$
and $\gamma_n$. Since the total length of $\partial E_n$ is less
than $4L+4$, it follows from Corollary~\ref{corArEst}
that there is a constant $C$ such that $E_n$ satisfies
the uniform area estimate~\eqref{areaest}:
\begin{equation*}
\A(E_n) \leq C(4L+4).
\end{equation*}
\noindent In particular, $E$ has finite area,
which contradicts our previous observation
that the area of $E$ was infinite, and completes the proof that $I_E^\infty = \infty$.
\end{proof}

\begin{proposition}\label{propImportant} Suppose
$\varphi\colon E\rightarrow N$ is an isometric
immersion of a complete annulus $E\equiv \sn1\times[0,\,+\infty)$ in $N$
 satisfying $\abs{H_\varphi} \leq a$. Then:

\begin{enumerate}[I.]
\item \label{i1}
If $I_E^\infty \in [0,\infty)$, then the induced homomorphism
$\varphi_*\colon \pi_1(E)\rightarrow \pi_1(N)$ is injective.
\item \label{i2}
 $I^\infty_E \geq \Inj(N)$, where $\Inj(N)$ denotes the injectivity radius of $N$.
\item \label{i3}
If $I^\infty_E = 0$, then $\varphi$ is proper.
\end{enumerate}
\end{proposition}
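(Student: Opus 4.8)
The plan is to treat the three items of Proposition~\ref{propImportant} by a lifting argument to the universal cover $\pi\colon \wt N \to N$, where $\wt N$ is a Hadamard $3$-manifold satisfying the same curvature bound $K_{\wt N}\le -a^2\le 0$, and then invoke Proposition~\ref{propInjPos} and the area estimate of Corollary~\ref{corArEst}. Throughout, recall that by the Gauss equation $K_E\le 0$, so every point $p\in E$ with $I_E(p)<\infty$ is the base point of a homotopically nontrivial geodesic loop in $E$ of length $2I_E(p)$.

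For item~\ref{i1}, I would argue by contradiction: suppose $\ker(\varphi_*)$ is nontrivial. Then there is an embedded (after reduction) homotopically nontrivial loop $\alpha$ in $E$ whose image $\varphi\circ\alpha$ is null-homotopic in $N$; hence the restriction of $\varphi$ to the noncompact annular subdomain $E'$ of $E$ cut off by $\alpha$ lifts to an isometric immersion $\ut\colon E'\to \wt N$ with $\abs{H_{\ut}}\le a$. Since $\wt N$ is simply connected, Proposition~\ref{propInjPos} applies to $E'$ and gives $I_{E'}^\infty=\infty$. But lifting is a local isometry, so the lengths of the geodesic loops detecting the injectivity radius of $E$ cannot increase under $\varphi$; more precisely, a homotopically nontrivial loop in $E'$ of length $\ell$ either stays homotopically nontrivial in $\wt N$-land giving a lower bound, or — and this is the point — any divergent sequence $\{q_n\}\subset E'\subset E$ with $I_E(q_n)\to I_E^\infty(e)<\infty$ produces geodesic loops in $E$ of bounded length whose lifts to $\wt N$ are loops of the same bounded length; since $\wt N$ is simply connected these lifted loops bound disks, forcing the corresponding portions of $E'$ to have finite area by Corollary~\ref{corArEst}, contradicting the infinite-area conclusion. (I would organize this so that the contradiction is with $I_{E'}^\infty=\infty$ directly: if $I_E^\infty(e)=L<\infty$ then on $E'$ there is a divergent sequence with $I_{E'}(q_n)\le L+1$, so $I_{E'}^\infty\le L+1<\infty$, contradicting Proposition~\ref{propInjPos}.)

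For item~\ref{i2}, assume $I_E^\infty<\Inj(N)$; I want to lift a tail of $E$ to $\wt N$ and again contradict Proposition~\ref{propInjPos}. Take a divergent sequence $\{q_n\}$ with $I_E(q_n)\to I_E^\infty$, so eventually $I_E(q_n)<\Inj(N)$, and let $\gamma_n$ be the associated homotopically nontrivial geodesic loop in $E$ of length $2I_E(q_n)<2\Inj(N)$. Its image $\varphi\circ\gamma_n$ is a geodesic loop in $N$ of length less than $2\Inj(N)$, hence null-homotopic, so $\gamma_n\in\ker\varphi_*$; combined with item~\ref{i1} this already shows $I_E^\infty\ge \Inj(N)$ once we know $I_E^\infty<\infty$ forces $\varphi_*$ injective, and injectivity of $\varphi_*$ together with $\gamma_n$ nontrivial in $\pi_1(E)$ but trivial in $\pi_1(N)$ is the desired contradiction — with the caveat that the case $I_E^\infty=\infty$ makes the inequality trivial. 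So item~\ref{i2} is essentially a corollary of item~\ref{i1} plus the elementary fact that a geodesic loop shorter than $2\Inj(N)$ is contractible.

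For item~\ref{i3}, suppose $I_E^\infty=0$ but $\varphi$ is not proper; then there is a divergent sequence $\{q_n\}\subset E$ with $\varphi(q_n)\to x\in N$. Pick $\delta>0$ with $2\delta<\Inj(N,x)$ (the injectivity radius of $N$ at the single point $x$, which is positive), and pass to a subsequence with $\varphi(q_n)\in B_N(x,\delta)$ and $I_E(q_n)<\delta$. The geodesic loop $\gamma_n$ based at $q_n$ of length $2I_E(q_n)<2\delta$ has image contained in $B_N(x,2\delta)$, a ball on which $\pi$ restricted to a component of $\pi^{-1}(B_N(x,2\delta))$ is an isometry; so $\varphi\circ\gamma_n$ is contractible, hence $\gamma_n\in\ker\varphi_*$, hence (by item~\ref{i1}, using $I_E^\infty=0<\infty$) $\gamma_n$ is contractible in $E$ — contradiction, since $\gamma_n$ is homotopically nontrivial in $E$ by Gauss–Bonnet. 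Thus $\varphi$ is proper. The main obstacle I anticipate is the bookkeeping in item~\ref{i1}: making precise the passage from ``$\varphi_*$ not injective'' to a lift of an honest one-ended annular subdomain to $\wt N$, ensuring the lifted subdomain is still complete with compact boundary so that Proposition~\ref{propInjPos} applies verbatim, and checking that the asymptotic injectivity radius of this lifted tail is controlled by $I_E^\infty(e)$ of the original end (a divergent sequence in the tail is divergent in $E$, and local isometry preserves the loop lengths). Everything else is a short deduction from item~\ref{i1} and the local-isometry property of covering maps on small balls.
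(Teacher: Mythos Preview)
Your overall strategy matches the paper's, and items~\ref{i2} and~\ref{i3} are argued essentially as in the paper, by reducing to item~\ref{i1}. There are, however, two genuine gaps.

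In item~\ref{i1} you assert the existence of an \emph{embedded} homotopically nontrivial loop $\alpha$ in $E$ with $\varphi_*[\alpha]=0$. But in an annulus every embedded essential simple closed curve represents a generator of $\pi_1(E)\cong\Z$, so what you are really assuming is that $\varphi_*$ is the zero map, not merely non-injective. If $\ker(\varphi_*)=n\Z$ with $n>1$, no subend of $E$ lifts to $\wt N$ and your cut-and-lift step fails. The paper handles this by first passing to the $n$-sheeted annular cover $\Pi\colon\wt E\to E$ corresponding to $\ker(\varphi_*)$; then $(\varphi\circ\Pi)_*$ is trivial, so $\wt E$ \emph{does} lift to $\wt N$, and one checks $I_{\wt E}^\infty\le n\,I_E^\infty<\infty$, contradicting Proposition~\ref{propInjPos}. (An alternative fix, not used in the paper: since $N$ is complete of nonpositive curvature, $\pi_1(N)$ is torsion-free, so a non-injective homomorphism $\Z\to\pi_1(N)$ must be zero; with this observation your argument goes through as written.) This is exactly the ``bookkeeping'' you flagged, but the obstruction is substantive, not cosmetic.

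In item~\ref{i3} you ``pass to a subsequence with $I_E(q_n)<\delta$''. The hypothesis $I_E^\infty=0$ only says that $\liminf I_E$ vanishes along \emph{some} divergent sequence; it does not, on its own, control $I_E$ along the particular divergent sequence $\{q_n\}$ furnished by non-properness (the one with $\varphi(q_n)\to x$). The paper closes this gap by invoking Lemma~\ref{lemmaLimitInjRad} from the Appendix, which proves (using $K_E\le 0$, nested geodesic loops, and Gauss--Bonnet) that on an annular end of nonpositive curvature $\lim_{n\to\infty} I_E(p_n)=I_E^\infty$ for \emph{every} divergent sequence $\{p_n\}$. You did not anticipate this step, and it is not automatic.
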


\begin{proof}
We first prove item~\ref{i1} of the proposition.
Arguing by contradiction, suppose
$I_E^\infty$ is finite
and $$\varphi_*\colon \pi_1(E)\rightarrow \pi_1(N)$$ is not injective.
Since $\pi_1(E)$ is isomorphic to $\Z$, the kernel $K$ of $\varphi_* $ is a cyclic
subgroup of index $n$, for some $n\in \N$. Let
$\Pi\colon \wt{E}\to E$ be the $n$-sheeted covering space of
$E$ corresponding to the subgroup $K$. Note that $\wt{E}$ is
 an annulus of nonpositive Gaussian curvature,
 $I_{\wt{E}}^\infty$ is less than or equal to $nI_E^\infty$
(since $y\in \Pi^{-1}(x)\Rightarrow I_{\wt{E}} (y)\leq n I_E(x)$)
and the induced map from the fundamental group of $\wt{E}$ to $N$ is trivial.
By covering space theory, $(\varphi\circ \Pi)\colon \wt{E}\to N$ lifts isometrically
to the universal cover $\wt{N}$ of $N$, which is a Hadamard manifold with respect to the
pulled-back metric. Since $I_{\wt{E}}^\infty$ is finite,
Proposition~\ref{propInjPos} gives a contradiction,
thereby completing the proof of item~\ref{i1}.

We next prove statement {\ref{i2}}. Suppose that $I^\infty_E < \Inj(N) $. Then there
exists a geodesic loop $\g$ in $E$ with length less than $2\Inj(N)$.
Since $\g$ is homotopically
nontrivial in $E$ and lies in a simply connected ball in $N$, then for any $p\in \g$,
the induced map
$\varphi_*\colon \pi_1(E,p)\rightarrow \pi_1(N,p)$ is trivial, contradicting item~\ref{i1}.

Finally, we prove {\ref{i3}}.
If $\varphi$ were not proper, there would exist an intrinsically
divergent sequence of points $q_n \in E$,
such that $\varphi(q_n)$ converges to a point $q \in N$.
Lemma~\ref{lemmaLimitInjRad} in the Appendix implies
$\lim_{n\to \infty} I_E(q_n)= 0$; hence, after
replacing by a subsequence, $q_n\in B_N(q,I_N(q)/2)$
and there exist homotopically nontrivial geodesic loops
$\gamma_n$ based at $q_n$ with lengths $2I_E(q_n)<I_N(q)$. By the triangle inequality,
the loops $\varphi(\gamma_n)$
are contained in the simply connected geodesic ball $B_N(q,I_N(q))$,
contradicting item~{\ref{i1}}. Thus, $\varphi$ is proper.\end{proof}

All of the assertions in Theorem~\ref{thmMain}, except for the
first statement of item~\ref{iC} of Theorem~\ref{thmMain},
follow immediately from Propositions~\ref{propInjPos} and~\ref{propImportant}.
The Cohn-Vossen~\cite{cv1} inequality implies that for
any complete surface $M$ of nonpositive curvature,
\begin{equation} \label{CV}
\int_{M} K_M \leq 2\pi \chi(M).
\end{equation}
In our setting where $M=\S$ and each end $e$ of
$\S$ satisfies $I_\S^\infty(e)$ is bounded,
Theorem~11 of Huber~\cite{hu1} implies
\begin{equation*}
\int_{\S} K_\S = 2\pi \chi(\S),
\end{equation*}
which completes the proof of Theorem~\ref{thmMain}.

\section{Proof of Theorem~\ref{thmAsymp}.}\label{secAsymp}

In this section, we prove Theorem~\ref{thmAsymp}, which describes,
among other things, the asymptotic behavior
of certain immersed surfaces in hyperbolic manifolds of finite volume.
Before we prove this result, we set up the notation
that we use and give a brief review of the structure of the ends of
orientable hyperbolic
manifolds of finite volume, called {\em cusp ends}.

We will use the half-space model for the hyperbolic space:
$$\hn3 = \{(x,\,y,\,z)\in \R^3\mid z > 0\},$$

\noindent endowed with the metric
$\displaystyle ds^2 = \frac{dx^2+dy^2+dz^2}{z^2}$.
In this model, the horizontal planes
\begin{equation*}
\cH(t) = \{(x,y,z)\in \hn3\mid z = t\}
\end{equation*}

\noindent are horospheres, with constant mean curvature $1$
with respect to its upward pointing unit normal field.
Vertical planes are totally geodesic and isometric to the
hyperbolic plane $\hn2$.

Fix two linearly independent horizontal vectors $u = (u_x,\,u_y,\,0),\,v =
(v_x,\,v_y,0) \in \R^3$
and let $G(u,v)$ be the group of parabolic translations of $\hn3$ generated
by $u,\,v$, i.e., $G(u,\,v) = \{\tau(m,n)\mid (m,n) \in \Z\times\Z\}$, where
each $\tau(m,n)$ is the parabolic isometry of $\hn3$ defined by
$\tau(m,n)(p) = p+mu+nv$. In coordinates,
\begin{equation}\label{taugeneral}
\begin{array}{rccl}
\tau(m,n)\colon& \hn3& \to & \hn3\\
&(x,y,z)&\mapsto & (x+mu_x+nv_x,y+mu_y+nv_y,z).
\end{array}
\end{equation}

If $N$ is a complete, orientable, noncompact hyperbolic 3-manifold of finite
volume, it has a finite number of ends $\cC_i$,
$i = 1,\,2,\,\ldots, n$, called the \emph{cusp ends} of $N$.
For each $\cC_i$ there exists $t_i > 0$ and
linearly independent horizontal vectors $u_i,\,v_i$,
such that $\cC_i$ is represented by, and henceforth
isometrically identified with, the quotient of
\begin{equation}\label{Mc}
\cM(t_i) = \bigcup_{t \geq t_i} \cH(t) = \{(x,y,z)\in \hn3 \mid z \geq t_i\}
\end{equation}
\noindent by the action of the group $G(u_i,\,v_i)$.
Since $G(u_i,v_i)$ leaves every horosphere $\cH(t)$ invariant
and $\cM(t_i)$ is foliated by $\{\cH(t)\}_{t\geq t_i}$,
each $\cC_i$ admits a product foliation by the family of
constant mean curvature 1 tori $\{\cT(t) = \cH(t)/G(u_i,v_i)\}_{t \geq t_i}$.
Also, the fundamental group
of each $\cC_i$ is naturally isomorphic to $G(u_i,v_i)$, viewed as the
subgroup of isometries of $\cM(t_i)$ that
commute with the covering map
$\psi_i\colon\cM(t_i)\to\cC_i$.

\subsection{Proof of items~\ref{one} -- \ref{four} of Theorem~\ref{thmAsymp}.}

With the notation concerning the structure of the cusp ends of $N$
discussed above, we next prove
the first four items of Theorem~\ref{thmAsymp}.

Let $\varphi\colon\Sigma\rightarrow N$ be the immersed surface given in the statement
of the theorem. Let $\cC_1,\,\cC_2,\,\ldots,\cC_n$ be cusp end representatives
of the ends of $N$ and let
$N_T = \ol{N \setminus\left(\cC_1\cup \cC_2\cup \ldots\cup \cC_n\right)}$.
Since $\varphi$ is a proper map and $N_T$ is compact, it holds that
$\varphi(\S) \cap N_T$ is compact. Without loss of generality, we may assume
that $\partial N_T$ is transverse to $\varphi(\S)$ and consists of a finite
collection of immersed closed curves.

To prove that $\S$ has finite area, it suffices to show that each intersection
$\varphi(\S) \cap \cC_i$ has finite area.
Let $\cC$ be one of the cusp ends $\cC_i$.
Up to a reparameterization, we may assume that $\cC = \cup_{t\geq 1} \cT(t)$.
We define $E(\cC) = \varphi^{-1}(\cC) \subset \S$ and
$E(t) = \varphi^{-1}\left(\cup_{s\in[1,t]}\cT(s)\right)$. We also use the
notation $\varphi = \varphi\vert_{E(\cC)}$.

Let $R\colon \cC \to [0,\,\infty)$
be the distance function to $\cT(1)$.
By following the arguments in the proof of Theorem~\ref{thmCyl},
one can obtain under the assumptions of the theorem
that the intrinsic Laplacian of $R\circ \varphi$ satisfies
\begin{equation}\label{superharmonic}
\Delta_{E(\cC)} (R \circ \varphi) \leq H^2-1 < 0.
\end{equation}

Let $t > 1$ be a regular value for $\cR\circ \varphi$
and let $\Gamma_t = \varphi^{-1}(\cT(t))$. Also, we denote
$\Gamma_1 = \partial E(\cC)$, then
$\partial E(t) = \Gamma_1 \cup \Gamma_t$.
Integrating \eqref{superharmonic} over $E(t)$, we obtain
\begin{equation}\label{part1}
\inte{E(t)}\Delta_{E(\cC)}(R \circ \varphi) \leq (H^2-1)\A(E(t)).
\end{equation}

\noindent Applying the divergence theorem to the left hand side of \eqref{part1},
gives
\begin{equation}\label{part2}
\inte{E(t)}\hspace{-8pt}\Delta_{E(\cC)}(R \circ \varphi) = \inte{\Gamma_1}\hspace{-4pt}
\forma{\grad_{E(\cC)}(R \circ \varphi),\,\nu_1}
+ \inte{\Gamma_t}\hspace{-4pt}\forma{\grad_{E(\cC)}(R \circ \varphi),\,\nu_t},
\end{equation}

\noindent where $\nu_1$ and $\nu_t$ denote respectively
the outward pointing conormal vectors to $E(t)$ along $\Gamma_1$ and
$\Gamma_t$ (see Figure~\ref{figEt}).
It follows from \eqref{part1} and \eqref{part2} that
\begin{figure}
\centering
\includegraphics[width=0.6\textwidth]{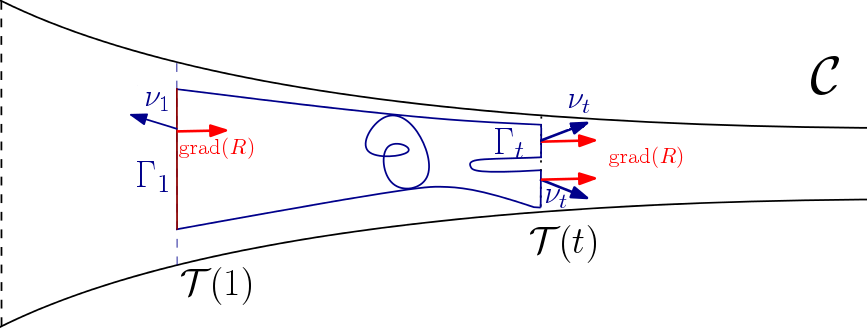}
\caption{The immersed compact region $\varphi(E(t))$.}\label{figEt}
\end{figure}
\begin{equation}\label{estim1}
(1-H^2)\A(E(t))\leq - \inte{\Gamma_1}\hspace{-1pt} \forma{\grad_{E(\cC)}(R \circ \varphi),\,\nu_1}
- \inte{\Gamma_t}\hspace{-1pt}\forma{\grad_{{E(\cC)}}(R \circ \varphi),\,\nu_t}.
\end{equation}

Since $\varphi$ is an isometric immersion and $\nu_t$ is tangent to $E(\cC)$,
$\forma{\grad_{E(\cC)}(R\circ \varphi),\,\nu_t} =
\forma{\grad(R),\,d\varphi(\nu_t)}$. Moreover, by the definition of $E(t)$,
along $\varphi(\Gamma_t)$ we have
$$\forma{\grad(R),\,d\varphi(\nu_t)}>0.$$

\noindent Hence, $\forma{\grad_{E(\cC)}(R\circ \varphi),\,\nu_t} > 0$
and \eqref{estim1} implies that
\begin{equation}\label{areaest2}
\A(E(t)) < -\frac{1}{1-H^2} \inte{\Gamma_1}
\forma{\grad_{E(\cC)}(R \circ \varphi),\,\nu_1}
\leq \frac{1}{1-H^2}\len(\Gamma_1).
\end{equation}

\noindent It follows that $\A(E(\cC))$ is bounded by $\frac{1}{1-H^2}\len(\Gamma_1)$,
which proves that $\S$ has finite area.

To finish the proof of item~\ref{one}, just note that each connected component
$E$ of $\S$ must be such that $\varphi(E) \cap N_T\neq \es$, otherwise
$\varphi(E)$ would be contained in a cusp end $\cC$ of $N$; since
$\varphi$ is proper, this implies that $R \circ \varphi\vert_E$ would attain a
minimal value $t^*$ on an interior point $p^*\in E$. Then, the
mean curvature comparison principle applied to
$\cT(t^*)$ and to $\varphi(E)$ at $\varphi(p^*)$ gives a contradiction, since
the mean curvature of $\cT(t^*)$ is $1$, $\varphi(E)$ lies in the mean
convex side of $\cT(t^*)$ and the mean curvature of $\varphi(E)$ at
$\varphi(p^*)$ is strictly less than $1$. Finally, since $\varphi$ is proper,
$\varphi^{-1}(N_T)$ must contain a finite number of connected components.

The second item of the theorem follows from item~\ref{one}, as we next explain.
Suppose there exist an $\ve>0$ and
a divergent sequence of points $\{p_n\}_{n\in\N}$
in $\S$ such that $I_\S(p_n) \geq \ve$. After replacing by a subsequence, we may
assume that $\{B_\S(p_n,\,\ve)\}_{n\in\N}$ is a collection of pairwise disjoint
disks. Since $\S$ has nonpositive curvature,
comparison theorems imply that $\A(B_\S(p_n,\,\ve))\geq \pi\ve^2$.
Hence $\S$ has infinite area, contradicting item~\ref{one}, which proves
item~\ref{two}.

Next, we prove item~\ref{three}. Since $\S$ has a finite number of connected
components by item~\ref{one},
if $\S$ has infinite topology, then its Euler characteristic is $\chi(\S) = -\infty$.
Hence, Cohn-Vossen inequality \eqref{CV}
implies that $\S$ has infinite total curvature, which proves item~\ref{three}
in this case. In the case where $\S$ has finite topology, item~\ref{two}
implies that $I_\Sigma$ is bounded; therefore equation
\eqref{totcurv2} holds by item~\ref{iC} of Theorem~\ref{thmMain}.

To prove that $\varphi(\S)$ has unbounded norm of the second fundamental form
when it has infinite topology, just note that a uniform bound on
$\norma{A_\varphi}$, with the assumption that $\abs{H_\varphi}\leq H$,
would imply that $K_\S$ is uniformly bounded, by Gauss' equation. In
particular, since $\S$ has finite area it would have finite total
curvature, contradicting item~\ref{three}, which proves item~\ref{four}.

\subsection{Bounds on the second fundamental form.}

A key step in obtaining the asymptotic
description of the annular ends of $\S$ in the constant mean curvature setting,
is the boundedness of the second fundamental form of each such end.

\begin{proposition}\label{propbound}
Let $N$ be a complete, noncompact hyperbolic $3$-manifold of
finite volume and $H \in [0,\,1)$. If $E$ is a
complete, properly immersed annulus in $N$
with constant mean curvature $H$, then
$\varphi(E)$ has bounded norm
of its second fundamental form.
\end{proposition}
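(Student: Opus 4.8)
The plan is to argue by contradiction: suppose that $\norma{A_\varphi}$ is unbounded on the properly immersed annulus $\varphi\colon E\to N$. Then there is a divergent sequence of points $p_n\in E$ with $\norma{A_\varphi(p_n)}\to\infty$. Since $N$ has finite volume, by item~\ref{two} of Theorem~\ref{thmAsymp} (already established for $\S$, and applying equally to the component $E$) the injectivity radius $I_E(p_n)\to 0$, and moreover the $\varphi(p_n)$ eventually enter a fixed cusp end $\cC = \cup_{t\ge 1}\cT(t)$ of $N$, with $R(\varphi(p_n))\to\infty$ where $R$ is the distance to $\cT(1)$. The strategy is then to rescale: lift a neighborhood of $p_n$ in $E$ to the universal cover $\cM(t_n)\subset\hn3$ of the cusp, translate $\varphi(p_n)$ to a fixed base point (say to height $z=1$) using the isometries of $\hn3$, and blow up the metric by the factor $\lambda_n = \norma{A_\varphi(p_n)}\to\infty$.

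The key steps, in order, are as follows. First I would set up the rescaled immersions $\varphi_n\colon (E_n, \lambda_n^2 g_n)\to (\hn3,\lambda_n^2 ds^2)$, where $g_n$ is the pulled-back metric and $E_n$ is an intrinsic ball around $p_n$ of fixed (unrescaled) radius; the ambient manifolds $(\hn3,\lambda_n^2 ds^2)$ converge smoothly to flat $\R^3$ on compact sets, and the mean curvature of $\varphi_n$ with respect to the rescaled metric is $H/\lambda_n\to 0$. Second, I would invoke standard compactness theory for immersions with bounded second fundamental form (the blow-up is arranged so that $\sup\norma{A_{\varphi_n}}$ is bounded, with value $1$ at the center): after passing to a subsequence, the $\varphi_n$ converge in $C^2_{\mathrm{loc}}$ to a complete immersed \emph{minimal} surface $\varphi_\infty\colon \Sigma_\infty\to\R^3$ passing through the origin with $\norma{A_{\varphi_\infty}}(0)=1$, hence nonflat. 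Third, I would extract a contradiction from the geometry: the area estimate machinery of Section~\ref{secArea} (or directly the finite-area conclusion from~\eqref{areaest2}) forces the limit surface to have at most quadratic area growth — indeed, pulling the inequality~\eqref{areaest2} back and noting that $\A(E(\cC))<\infty$ while the rescaled balls $B_{E_n}(p_n,\rho/\lambda_n)$ have areas converging to $\A(B_{\Sigma_\infty}(0,\rho))$, one sees that $\Sigma_\infty$ has finite — hence, being complete and minimal, zero — total curvature, so $\Sigma_\infty$ is a plane, contradicting $\norma{A_{\varphi_\infty}}(0)=1$. Alternatively, and perhaps more cleanly, the lifts of the short geodesic loops $\gamma_n$ of length $2I_E(p_n)$ rescale to loops of length $2\lambda_n I_E(p_n)$; if this product stays bounded one gets a nonsimply-connected limit that must be a cylinder or plane (a catenoid-type contradiction with the loop geometry), and if it diverges one gets injectivity radius going to infinity in the limit, forcing $\Sigma_\infty$ to be a plane by the Meeks--Rosenberg half-space-type analysis — either way contradicting nonflatness.

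I expect the main obstacle to be controlling the interplay between the two scales present in the cusp: the ambient metric near $\varphi(p_n)$ is hyperbolic, but lifted to $\hn3$ and recentered it looks like a fixed neighborhood of a point at height $1$, while the loop $\gamma_n$ detects the parabolic holonomy of the cusp at a scale that is comparable to $\mathrm{dist}$ to the thin part — so one must verify that after the blow-up by $\lambda_n$ the relevant loops and the ambient geometry interact in a controlled way, and in particular that the limit immersion is genuinely complete (no loss of the end) and that the rescaled second fundamental forms are uniformly bounded on fixed-radius balls (which requires a point-picking argument à la Schoen: replace $p_n$ by a point $q_n$ maximizing $\norma{A_\varphi}\cdot\mathrm{dist}_E(\cdot,\partial E_n)$ to guarantee the bound on balls of radius $\to\infty$ in the rescaled metric). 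Once completeness and the uniform curvature bound on the limit are secured, the contradiction with the finite-area estimate~\eqref{areaest2} for $E(\cC)$ is immediate, since a complete nonflat minimal surface in $\R^3$ arising as such a limit has infinite area but is squeezed inside regions of controlled area.
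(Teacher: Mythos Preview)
Your overall blow-up strategy matches the paper's, and you correctly anticipate the need for a Schoen-style point-picking argument to secure a uniform curvature bound on the rescaled surfaces. However, the contradiction step contains two genuine gaps.

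First, the inference ``$\Sigma_\infty$ has finite --- hence, being complete and minimal, zero --- total curvature, so $\Sigma_\infty$ is a plane'' is simply false: the catenoid and Enneper's surface are complete nonflat minimal surfaces in $\R^3$ with finite total curvature. Finite total curvature alone does not force flatness.

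Second, and more fundamentally, you have not explained how the finite area of $E(\cC)$ (or any other global quantity on $E$) constrains the blow-up limit. The blow-up takes place in the universal cover $\Et$, mapped by $\phi$ into $\cM(1)\subset\hn3$, not in $E$ itself. The rescaled intrinsic balls around $p_n$ live in $\Et$, and while their rescaled areas do converge to $\A(B_{\Sigma_\infty}(0,\rho))$, there is no a~priori bound on these rescaled areas coming from $\A(E)<\infty$: you are multiplying the original areas by $\lambda_n^2\to\infty$. To extract a contradiction from any finite quantity on $E$, one must show that the blow-up disks descend \emph{injectively} under the covering $\Pi\colon\Et\to E$ to pairwise disjoint domains in $E$. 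This is exactly the missing bridge, and it is where the paper's argument is most delicate: it uses the \emph{left-invariant Gauss map} of $\lambda_n\hn3$, viewed as a metric Lie group. This Gauss map is invariant under the parabolic covering transformations (which are left translations) and converges to the ordinary Gauss map of the nonflat minimal limit $G_\infty$, which is a local diffeomorphism near the origin. Hence for large $n$ the Gauss map is injective on the small blow-up disks, which forces $\Pi$ to be injective there. One then obtains infinitely many disjoint disks in $E$, each with total curvature below a fixed negative constant, contradicting the finite total curvature of $E$ (established separately via Gauss--Bonnet on annuli bounded by short geodesic loops). Without this descent mechanism, neither your area argument nor the loop-based alternative can be completed; in particular, the loops $\gamma_n$ in $E$ lift to arcs (not loops) in $\Et$, so their rescaled lengths carry no direct topological information about $\Sigma_\infty$.
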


\begin{proof}
Since $\varphi$ is a proper map, then there exist a cusp end $\cC$ of $N$ and
a subannular end $E^\prime \subset E$ such that
$\varphi(E^\prime)$ is contained in $\cC$; since it suffices to prove
that $E^\prime$ has bounded second fundamental form, we may assume
that $\varphi(E) \subset \cC$.
Up to a reparameterization, we write $\cC = \cup_{t\geq 1} \cT(t)$; in
particular, $\varphi(\partial E) \subset \cup_{t\in[1,\,t_0]}\cT(t)$,
for some $t_0 \geq 1$. Using this notation, we
prove next claim.

\begin{claim}\label{claimtransverse}
For almost every $t \geq t_0$, $\varphi(E)$ meets the torus $\cT(t)$
transversely and for each such $t$ there is a unique homotopically nontrivial
closed curve $\a_t \subset E$ such that $\varphi(\a_t)$ is contained in $\cT(t)$.
Moreover, the annular subend $E^\prime$ of $E$ determined by $\a_t$
is immersed in $\cup_{s \geq t}\cT(s)$ and the induced map
$\varphi'_*\colon \pi_1(E')\to \pi_1(\cup_{s \geq t}\cT(s))$ is injective.
\end{claim}

\begin{proof}[Proof of Claim~\ref{claimtransverse}]
Sard's Theorem implies that for almost all $t\geq t_0$, $\varphi$
is transverse to $\cT(t)$ and for such $t$,
$\G_t=\varphi^{-1}(\cT(t))$ is a finite collection of
pairwise disjoint simple closed curves in $E$.
Fix a regular value $t_1>t_0$.
We next prove that exactly one of the curves in $\G_{t_1}$ is homotopically
nontrivial in $E$.
Since $\varphi^{-1}(\cup_{t\in[1,t_1]}\cT(t))$ is a
compact surface (possibly disconnected)
containing $\partial E$,
then the 1-cycle $\partial E$ is $\Z_2$-homologous to
the collection of unoriented curves in $\G_{t_1}$.
Since $\partial E$ represents the nontrivial element
in $H_1(E,\Z_2)$, then at least one of the curves in $\G_{t_1}$
is homotopically nontrivial.

Concerning the uniqueness part of the claim,
assume that there are two homotopically nontrivial curves,
$\a_1,\a_2$, in $E$ such that $\varphi(\a_1)$ and $\varphi(\a_2)$ are both contained
in $\cT(t_1)$.
Consider the subends $E_1,E_2 \subset E$ determined respectively by
$\a_1,\,\a_2$ and assume
 that $E_2 \subset E_1$. Let $\widehat{E} \subset E$
be the compact annulus bounded by $\a_1$ and $\a_2$. Then,
either $\varphi(\widehat{E})$
intersects $\cup_{t< t_1}\cT(t)$ or it is contained in
$\cup_{t\geq t_1}\cT(t)$. If the first case occurs, there is a $t_* < t_1$
such that $\varphi(\wh{E})$ intersects $\cT(t_*)$ but does not
intersect $\cT(t)$ for any $t<t_*$. Because
the mean curvature vector of every torus $\cT(t)$
has length $1$ and points into the cusp subend of $\cC$ determined
by $\cT(t)$, the
mean curvature comparison principle for the
surfaces $\varphi(\wh{E})$ and $\cT(t_*)$ at a point
in $\varphi(\wh{E})\cap \cT(t_*)$
gives a contradiction, therefore,
$\varphi(\widehat{E}) \subset \cup_{t\geq t_1}\cT(t)$.
Since $\varphi(E)$ meets $\cT(t_1)$ transversely,
it follows that $\varphi(E_2)$ contains
points in $\cup_{t< t_1}\cT(t)$ near $\varphi(\a_2)$, and
we may find $t^\prime_* <t_1$, where $t^\prime_*$ is
the smallest $t$ such that $\varphi(E_2)$ intersects $\cT(t)$,
which by the previous argument gives a contradiction.
This proves that $\varphi$ induces an immersion
$\varphi'\colon E'\to \cup_{s \geq t}\cT(s)$.

By item~\ref{two} of Theorem~\ref{thmAsymp}, $I_\S$ is bounded.
Thus, by part C of Theorem~\ref{thmMain},
$\varphi'_*\colon \pi_1(E')\to \pi_1(\cup_{s \geq t}\cT(s))$ is injective.
This completes the proof of Claim~\ref{claimtransverse}.
\end{proof}

\begin{figure}
\centering
\includegraphics[width=0.9\textwidth]{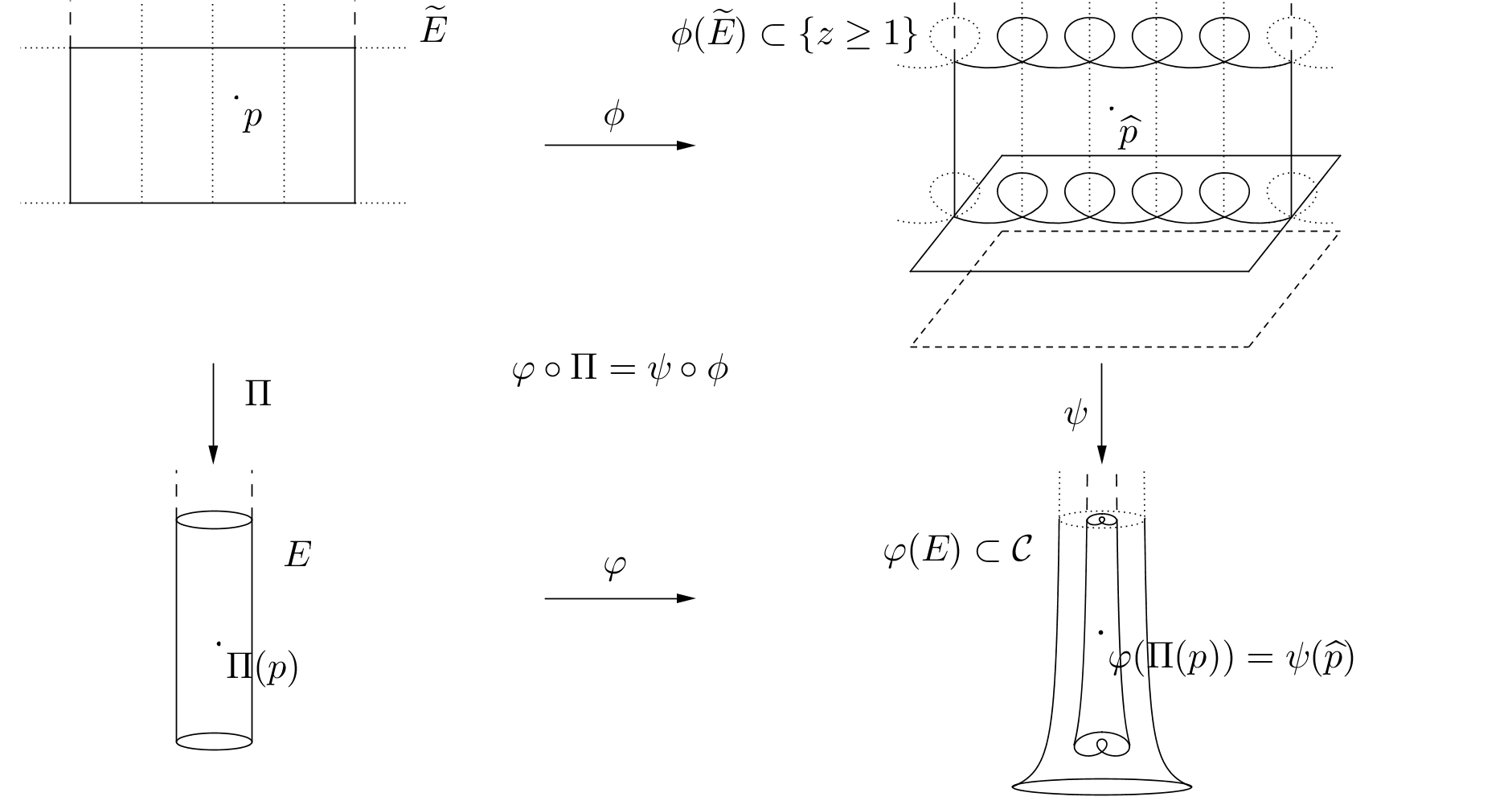}
\caption{The immersions $\phi$ and $\varphi$ and the covering maps
$\Pi\colon \Et \to E$ and $\psi\colon \cM(1)\to \cC$.}\label{figDiagram}
\end{figure}

We next fix some notation. Because of Claim~\ref{claimtransverse}
we henceforth assume, without loss of generality, that $t_0 =1$ and that
$\varphi(E)$ intersects $\partial \cC=\cT(1)$
transversely in the set $\partial E = \alpha_1$. Moreover, we
assume that $\cC$ is isometric to the quotient space $\cM(1)/G(u,v)$, for
two linearly independent horizontal vectors $u$ and $v$. Let
$\psi\colon \cM(1) \to \cC$
be the covering of $\cC$ associated to $G(u,v)$ and let
$\Pi\colon\Et \to E$ be the universal cover of $E$.
Choose a base point $p \in \Et$ and consider $\varphi(\Pi(p)) \in \cC$.
After choosing
$\wh{p} \in \psi^{-1}(\varphi(\Pi(p)))$,
covering space theory implies that
there exists a unique
immersion $\phi\colon \Et \to \cM(1)$ such that
$\phi(p) = \wh{p}$ and
$\psi \circ\phi = \varphi \circ \Pi$; in particular, it follows that $\phi$ is
proper, since $\varphi_*\colon \pi_1(E)\to \pi_1(\cup_{s \geq t}\cT(s))$
is injective.
Moreover, $\phi(\Et)$ is an immersed half-plane in $\hn3$.
See Figure~\ref{figDiagram}.

To prove Proposition~\ref{propbound},
it suffices to prove that $\phi(\Et)$ has bounded norm
of the second fundamental form $\norma{A_{\phi}}$.
Arguing by contradiction, assume there exists a sequence of points
$\{p_n\}_{n\in\N}$ in $\Et$ such that $\norma{A_\phi}(\wh{p}_n) \geq n$,
where we denote $\wh{p}_n = \phi(p_n)$.
Since $\norma{A_\varphi}$ is bounded on compact sets, the sequence of the
image points $\{\psi(\wh{p}_n)\}_{n\in\N}\subset \varphi(E)$ is intrinsically
(thus extrinsically, since $\varphi$ is proper) divergent.
After choosing a subsequence, we may assume
$d_{\cC}(\psi(\wh{p}_n)),\,\psi(\wh{p}_m)) > 2$,
for $n\neq m$; hence the sequence $\{\wh{p}_n\}_{n\in\N}$
in $\hn3$ is extrinsically divergent and $d_{\hn3}(\wh{p}_n,\,\wh{p}_m) > 2$
for $n\neq m$. For the construction that follows, see Figure~\ref{figCnGn}.

Without loss of generality, we may assume that the spheres $\{\partial\ol{B}_{\hn3}(\wh{p}_n,1)\}_n$
are transverse to $\phi$.
For $n\in\N$, let $C_n \subset \wt{E}$ be the connected component of
$\phi^{-1}(\ol{B}_{\hn3}(\wh{p}_n,1))$ containing $p_n$. Consider the function
$$\begin{array}{rccl}
f_n\colon& C_n&\to &\R\\
&x&\mapsto& \norma{A_\phi}(\phi(x))d_{\hn3}(\phi(x),\,\partial \overline{B}_{\hn3}(\wh{p}_n,\,1)).
\end{array}$$

\noindent Let $q_n \in C_n$
be a point where
$f_n$ achieves its maximum. Let $\wh{q}_n = \phi(q_n)$. Then,
\begin{equation}\label{almaxcurv}
\norma{A_\phi}(\wh{q}_n)d(\wh{q}_n,\,\partial B_{\hn3}(\wh{p}_n,\,1))
= f_n ({q}_n) \geq f_n({p}_n) = \norma{A_\phi}(\wh{p}_n) \geq n.
\end{equation}

\noindent Let $\lambda_n = \norma{A_\phi}(\wh{q}_n)$ and
$\delta_n = d_{\hn3}(\wh{q}_n,\,\partial B_{\hn3}(\wh{p}_n,\,1))$.
Note that, if $x\in C_n$ and $ \phi(x) \in \ol{B}_{\hn3}(\wh{q}_n,\,\delta_n/2)$,
then $\norma{A_\phi}(\phi(x)) \leq 2\lambda_n$,
since $d_{\hn3}(\phi(x),\wh{q}_n) < \delta_n/2$.

In this proof we use the following notation: for $\lambda > 0$
and a Riemannian manifold $M = (M,g)$, we denote $\lambda M = (M,\,\lambda^2 g)$ the
Riemannian manifold given by a scaling of the metric of $M$ by $\lambda$.

Using exponential coordinates in $\hn3$
centered at the point $\wh{q}_n$, consider $\lambda_n B_{\hn3}(\wh{q}_n,\,\delta_n/2)$
to be a ball of radius
$\displaystyle {\lambda_n \delta_n}/{2}$ in $\R^3$ with $\wh{q}_n$ at the origin.
From \eqref{almaxcurv}, we have
$\lambda_n\delta_n \rightarrow \infty$,
and so the sequence $\{\lambda_nB_{\hn3}(\wh{q}_n,\,\delta_n/2)\}_{n\in\N}$ of Riemannian balls
converges to the Euclidean space $\R^3$ with its flat metric.
Let $\wh{C}_n = \phi(C_n)\cap \ol{B}_{\hn3}(\wh{q}_n,\delta_n/2)$
and, for $r> 0$, let
$B_n(r) =B_{\lambda_n\hn3}(\wh{q}_n,r) = \lambda_nB_{\hn3}(\wh{q}_n,\,r/\lambda_n)$.
The scaled surfaces
$\lambda_n\wh{C}_n$ are immersed in $B_n(\lambda_n\delta_n/2)$,
with constant mean curvature $H_n = H/\lambda_n$ and satisfy
\begin{equation}\label{eqbounded}
\norma{A_{\lambda_n\wh{C}_n}}\leq 2,
\quad \norma{A_{\lambda_n\wh{C}_n}}(\wh{q}_n) = 1.
\end{equation}

This uniform bound on the second fundamental form of $\lambda_n\wh{C}_n$
implies that there is a $\delta > 0$
such that for every $n\in\N$, there exists a connected domain $G_n$ of
$C_n$, containing $q_n$ and such that:

\begin{enumerate}
\item $\lambda_n \phi(G_n) \subset B_n(\delta)$;
\item $\partial[ \lambda_n \phi(G_n)] = \lambda_n \phi(\partial G_n) \subset \partial B_n(\delta)$;
\item $\lambda_n\phi(G_n)$ is embedded and it is a graph over its projection to
$T_{\wh{q}_n}\lambda_n\phi(G_n)$,
with graphing function having uniformly bounded gradient,
for all $n\in\N$.
\end{enumerate}

\begin{figure}
\centering
\includegraphics[width=0.9\textwidth]{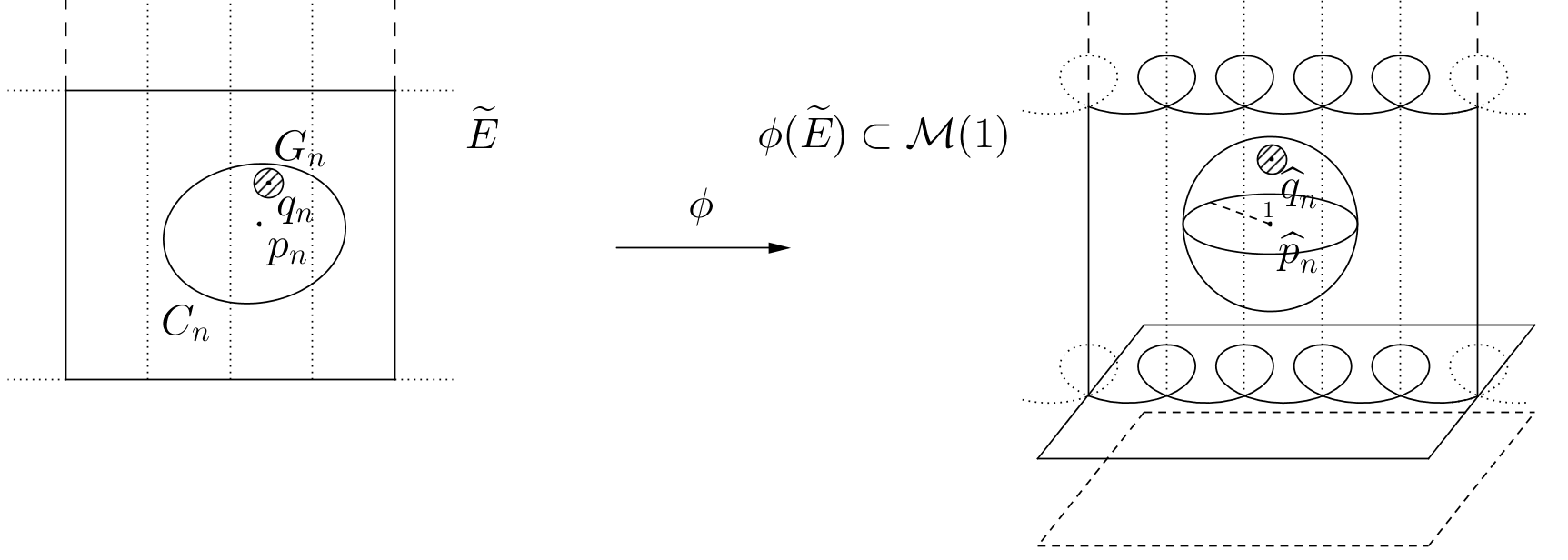}
\caption{The domain $C_n$ containing $p_n$ is immersed in $B_{\hn3}(\wh{p}_n,1)$,
$q_n$ is a maximal point to $f_n$ and the subdomain $G_n \ni q_n$ is
such that $\phi(G_n)$ is an embedded graph over
$T_{\wh{q}_n}\phi(\Et)$.}\label{figCnGn}
\end{figure}

A subsequence of the graphs
$\{\lambda_n\phi(G_n)\}_{n\in\N}$ (which we still denote
by $\{\lambda_n\phi(G_n)\}_{n\in\N}$)
converges, as $n \rightarrow \infty$, to a minimal graph
${G}_\infty\subset \R^3$, embedded in $B_{\R^3}(\vec0,\delta)$ and with $\partial G_\infty
\subset \partial B_{\R^3}(\vec0,\de)$.
We next use the Gauss map $g\colon \Gi\to\sn2$
of $\Gi$ to prove that $E$ has infinite total curvature, from which we
obtain a contradiction.

$\lambda \hn3$ is well-known to be isometric to a Lie group
together with a left invariant metric.
This Lie group is the semidirect product of $\R^2$ with $\R$ having associated
homomorphism $f\colon \R\to \mbox{Gl}(2,\R)$ given by
$f(t)=\exp(tI_\lambda)\in \mbox{Gl}(2,\R)$, where
$I_\lambda$ is the $2\times2$ diagonal matrix
with $1/\lambda$ as diagonal entries; see, for instance,~\cite{mpe11}.
In this $\R^3$-coordinate model for $\lambda\hn3$ the horizontal planes are
left and right cosets of the normal subgroup $\R^2$ of the Lie group,
which we view as being the $(x,y)$-plane with the metric at the origin
$(0,0,0)$ corresponding to the usual metric on $\R^3$;
also left translations by elements in $\R^2$ correspond to parabolic isometries
in the previous upper halfspace model for $\lambda \hn3$
given by translations by horizontal vectors.

From this point of view, for each $n\in\N$,
the metric of $\lambda_n\hn3$ is left
invariant and there is a left invariant Gauss map, defined
for any oriented surface immersed in $\lambda_n\hn3$ and taking values
in the unit sphere $\sn2$ of the tangent space to the identity element
of this semidirect product.
Denote by
$g_n\colon \lambda_n\phi(G_n) \to \sn2$ the left invariant Gauss map
of the oriented embedded surface $\lambda_n\phi(G_n) \subset \lambda_n\hn3$.
Since the group structures of $\lambda_n\hn3$ converge to the abelian group
structure of $\R^3$, then $g_n$ converges to $g\colon G_\infty \to \sn2$,
the Gauss map of $G_\infty$. This convergence is explained
in~\cite{mmpr4}, in the last paragraph of Step~4 of the proof of Theorem~4.1.

Note that \eqref{eqbounded} implies that
$$\norma{A_\Gi}\leq 2,\quad \norma{A_\Gi}(\vec0) = 1.$$

\noindent Hence, $g$ is
injective near $\vec{0}\in \Gi$, since $\Gi$ is a
minimal surface of $\R^3$ with Gaussian curvature
$-1/2$ at $\vec{0}$. It follows that
there exists $\wt{\delta} \in (0,\de)$ such that the graph
$\Gi\cap B_{\R^3}(\vec0,\wt{\de})$ is a disk and
$g\colon\Gi\cap B_{\R^3}(\vec0,\wt{\de})\to \sn2$ is an injective
diffeomorphism with its image.
Note that there exists some $\ve>0$ such that
for all $x \in \Gi\cap B_{\R^3}(\vec0,\wt{\de})$ and $X \in T_x \Gi$
it holds
$\norma{dg_x(X)}\geq \ve\norma{X}$. Then, the fact that $g_n\to g$
in the $C^{1,\alpha}$-topology implies that
there exists a
$\delta^\prime \in (0,\wt{\delta})$ such that,
for $n$ sufficiently large,
$g_n\colon \lambda_n\phi(G_n)\cap B_n(\delta^\prime)\to \sn2$
is also injective.

The fact that $\Gi$ is not flat also implies that
$\Gi\cap B_{\R^3}(\vec0,\de^\prime)$
has strictly negative total curvature; hence there is $K_0 > 0$ such that,
for $n$ sufficiently large, $\lambda_n\phi(G_n)\cap B_n(\delta^\prime)$
has total curvature less than $-K_0^2$. Since total curvature is invariant
under scalings, it follows that
\begin{equation}\label{neg}
\inte{G_n\cap \phi^{-1}(B_n(\delta^\prime))}K_{\Et} < -K_0^2 < 0.
\end{equation}

The assumption
$d_{\cC}(\psi(\wh{p}_n),\psi(\wh{p}_m)) > 2$ if $n\neq m$ implies that,
for $n\neq m$, $\Pi(G_n)\cap \Pi(G_m) = \emptyset$,
therefore $\{\Pi(G_n\cap \phi^{-1}(B_n(\delta^\prime)))\}_{n\in\N}$
is a collection of pairwise disjoint domains of $E$. Furthermore, the assumption
that the restriction of $g_n$ to $B_n(\delta^\prime)$ is injective implies that
$\Pi\vert_{\phi^{-1}(B_n(\delta^\prime))\cap G_n}$ is injective, then each domain
$\Pi(G_n\cap \phi^{-1}(B_n(\delta^\prime)))$ has negative total curvature,
uniformly bounded away from zero by~\eqref{neg}. Hence, we conclude that
the total curvature of $E$ is infinite.

On the other hand, we next prove the total curvature of $E$ is finite. Let
$\{p_n\}_{n\in\N}\subset E$ be a divergent sequence of points. The proofs of
items~\ref{one} and~\ref{two} apply and show that $\lim_{n\to \infty} I_E(p_n) = 0$;
in particular $I_E$ is bounded. Since $E$ has nonpositive Gaussian curvature,
there are geodesic loops $\gamma_n$ based at each $p_n$, and, as explained
in the proof of Theorem~\ref{thmMain}, $\gamma_1$ and
$\gamma_n$ bound a compact annulus $E_n$ such that $\{E_n\}_{n\in\N}$ exhaust
a subend $E^\prime$ of $E$.
A simple argument using Gauss-Bonnet formula shows that $E^\prime$
has finite absolute
total curvature at most $2\pi$, which implies that $E$ has finite total curvature.
This contradiction shows that
$\norma{A_\phi}$ is bounded, finishing the proof of Proposition~\ref{propbound}.
\end{proof}

\subsection{Asymptotics of annular ends of $\varphi(\S)$: proof of item~\ref{six}.}
\label{subAsymp}

Next, we proceed with the proof of the theorem by proving item~\ref{six}, where
we analyze the asymptotic behavior of $\varphi(E)$, where $E$ is an annular end
representative of an end $e$ of $\S$.
Fix a cusp end $\cC = \cM(1)/G(u,v)$ of $N$.
We now describe the standard constant
mean curvature annular ends in $\cC$. Consider
$(k_1,k_2)\in \Z\times\Z \setminus \{(0,0)\}$
such that the greatest common divisor of $k_1$ and $k_2$ equals $1$.
Let $\tau(k_1,\,k_2)$ be the parabolic isometry of $\hn3$
given by~\eqref{taugeneral}
and let again $\psi\colon \cM(1) \to \cC$ denote the universal covering
transformation related to $G(u,v)$ and $\phi(\Et) \subset \cM(1)$ be
an immersed half-plane as
in the proof of Proposition~\ref{propbound} (see Figure~\ref{figDiagram}).

For each $c \in \R$, let $\cP_{0,c}(k_1,k_2)$ be the vertical plane
defined by
$$\cP_{0,c}(k_1,k_2)\hspace{-0.1cm} =\hspace{-0.1cm} \{(x,y,z)
\hspace{-0.08cm}\in\hspace{-0.08cm} \hn3
\hspace{-0.08cm}\mid\hspace{-0.08cm} (k_1u_y+k_2v_y)x-(k_1u_x+k_2v_x)y+c
\hspace{-0.08cm}= \hspace{-0.08cm}0,\ z >0\}.$$

\noindent Then,
$\tau(k_1,\,k_2)$ leaves invariant $\cP_{0,c}(k_1,k_2)$;
hence,
$$\cA_{0,c}(k_1,k_2)=\psi(\cP_{0,c}(k_1,k_2)\cap\cM(1))$$

\noindent is a properly embedded totally
geodesic annulus in $\cC$.
Note that the family
$\{\cA_{0,c}(k_1,k_2)\}_{c\in\R}$ is
periodic in the sense that there exists $l> 0$
such that for any $c\in \R$, $\cA_{0,c}(k_1,k_2)=\cA_{0,c+l}(k_1,k_2)$,
since for each $(m,n) \in \Z\times\Z$,
$$\tau(m,n)(\cP_{0,c}(k_1,k_2))=\cP_{0,c+(k_1n-k_2m)(u_xv_y-u_yv_x)}(k_1,k_2).$$

Finally, for each $c \in \R$, we consider the two families
$\{\cP_{H,c}^+(k_1,k_2)\}_{H\in(0,1)}$ and
$\{\cP_{H,c}^-(k_1,k_2)\}_{H\in(0,1)}$ of equidistant surfaces to
$\cP_{0,c}(k_1,k_2)$,
formed by tilted planes of constant mean curvature $H\in (0,1)$ that
meet $\{z = 0\}$ along the line $\{(k_1u_y+k_2v_y)x-(k_1u_x+k_2v_x)y+c
= 0\}$, see Figure~\ref{figPlanes}. Each $\cP^{\pm}_{H,c}(k_1,k_2)$ is also
invariant by $\tau(k_1,k_2)$, and
so each of the surfaces $\cP^{\pm}_{H,c}(k_1,k_2)\cap\cM(1)$ descends to $\cC$
as a properly embedded annulus $\cA^\pm_{H,c}(k_1,k_2)$ of constant mean curvature
$H$, and the family $\{\cA^\pm_{H,c}(k_1,k_2)\}_{c\in\R}$
is also periodic with respect to $c$.

\begin{figure}
\centering
\includegraphics[width=0.7\textwidth]{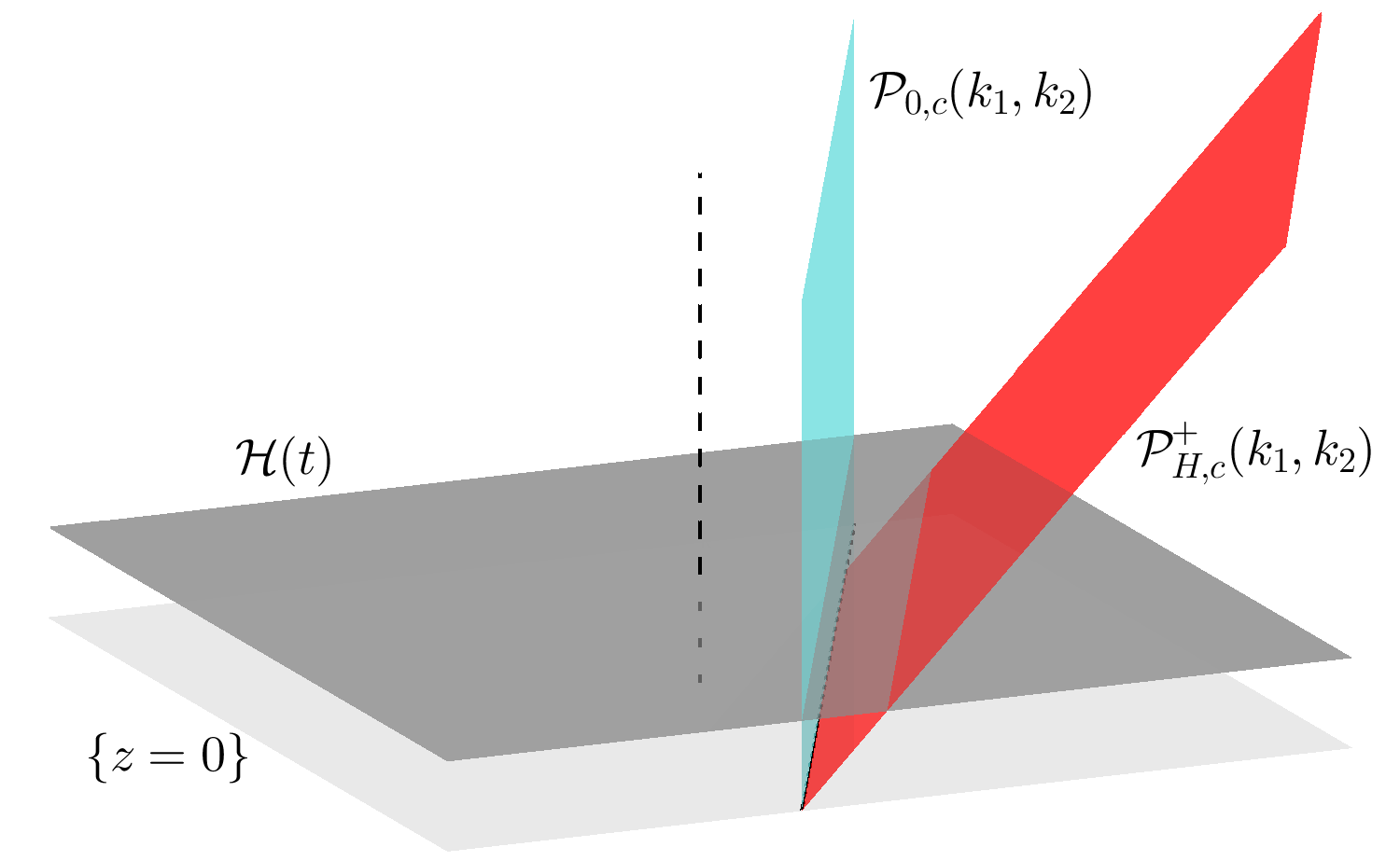}
\caption{The tilted plane $\cP_{H,c}^+(k_1,k_2)$ has constant mean curvature
$H = \cos(\alpha)\in(0,1)$, where $\alpha$ is the angle to the plane $\{z = 0\}$,
and is
equidistant to the totally geodesic vertical plane $\cP_{0,c}(k_1,k_2)$.
\label{figPlanes}}
\end{figure}

We will prove item~\ref{six} by showing that there is
a pair of integers $(k_1,k_2)$ such that $\phi(\Et)$ is
asymptotic either to $\cP_{H,0}^+(k_1,k_2)$ or to $\cP_{H,0}^-(k_1,k_2)$, with
multiplicity $1$.
Furthermore, we will show that $\varphi(E) = \psi(\Et)$ is
asymptotic to $\cA_{H,0}^+(k_1,k_2)$
or to $\cA_{H,0}^{-}(k_1,k_2)$, with some finite
multiplicity $k$.
Since $\cP_{H,c}^+(k_1,k_2)$ is asymptotic to $\cP_{H,0}^+(k_1,k_2)$
(and also $\cP_{H,c}^-(k_1,k_2)$ is asymptotic to $\cP_{H,0}^-(k_1,k_2)$)
for every $c \in \R$, this proves the result.

Let $[\a_1]$ be the generator of $\pi_1(E)$.
Since $\pi_1(\cC)\equiv \Z \times \Z$, we can consider
$\varphi_*([\a_1])$ to be an element of $\Z\times \Z$.
Claim~\ref{claimtransverse} implies that $\varphi_*([\a_1])$ is not
the trivial element, hence there are
relatively prime integers $k_1,\,k_2 \in \Z$ and some $k \in \N$ such that
$\varphi_*([\a_1]) = k(k_1,\,k_2)$. Thus, the map $\tau = \tau(kk_1,kk_2)$
\begin{equation}\label{tau}
\begin{array}{rccl}
\tau\colon& \hn3& \to & \hn3\\
&p&\mapsto & p+k(k_1u+k_2v)
\end{array}
\end{equation}

\noindent is such that $\tau(\phi(\wt{E})) = \phi(\wt{E})$.

Having fixed $k,\,k_1,\,k_2$, we simplify
the notation to $\cP_{H,c}^\pm = \cP_{H,c}^\pm(k_1,k_2)$ and, in
the $c = 0$ case, to $\cP_H^\pm = \cP_{H,0}^\pm(k_1,k_2)$.
We also let
$a = kk_1u_x+kk_2v_x$ and $b = kk_1u_y+kk_2v_y$, so that the
map $\tau$ of \eqref{tau} is
\begin{equation}\label{tauk}
\begin{array}{rccc}
\tau\colon& \hn3& \to & \hn3\\
&(x,y,z)&\mapsto &(x+a,y+b,z),
\end{array}
\end{equation}
\noindent the vertical planes $\cP_{0,c}$ are
$$\cP_{0,c} = \{(x,y,z)\in \hn3\mid bx-ay+c= 0,\,z > 0\}$$

\noindent and the equidistant surfaces $\cP_{H,c}^\pm$ are the tilted planes
with boundary at $\{z = 0\}$ given by the lines $\{bx-ay+c =0\}$.

Since $\partial \phi(\wt{E}) \subset \cH(1)$ is invariant under
the action of $\tau$ and it is a properly immersed curve,
it follows that $\partial \phi(\wt{E})$ stays at a finite distance
to the line $\{(x,y,1)\mid bx-ay = 0\}$. Thus there is some $c > 0 $ such that
$$\partial \phi(\wt{E})\subset \{(x,y,1)\in \hn3\mid \abs{bx-ay} < c\}.$$

\noindent With this, we can prove the following convex hull type property, see
Figure~\ref{figregion}.

\begin{claim}\label{claimregion}$\phi(\wt{E})$ is contained
in the region $\cR$ of $\cM(1)$,
whose boundary contains pieces of all the three planes
$\cP_{H,c}^-$, $\cH(1)$ and $\cP_{H,-c}^+$.
\end{claim}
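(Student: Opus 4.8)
The plan is to realize $\cR$ as the intersection of three half-spaces in $\cM(1)$ and to certify that $\phi(\wt E)$ lies in each of them separately, using the mean curvature comparison principle against each of the three bounding surfaces $\cP_{H,c}^-$, $\cH(1)$, and $\cP_{H,-c}^+$. Concretely, I would first observe that $\cM(1)$ sits inside $\hn3$, that $\phi$ is proper into $\cM(1)$ by construction, and that $\partial\phi(\wt E)\subset\cH(1)$ lies in the slab $\{|bx-ay|<c\}$, hence on the ``inner'' side of both tilted planes $\cP_{H,c}^-$ and $\cP_{H,-c}^+$; the region $\cR$ is then defined as the connected component of $\cM(1)$ containing a neighborhood of $\partial\phi(\wt E)$ and cut off by these two tilted planes together with $\cH(1)$. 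The key point is that $\cR$ is mean convex: each of $\cP_{H,c}^-$, $\cH(1)$, $\cP_{H,-c}^+$ has constant mean curvature $H\le 1$ (namely $H$ for the tilted planes, $1$ for the horosphere) with mean curvature vector pointing \emph{out} of $\cR$.

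Next I would run the contradiction argument. Suppose $\phi(\wt E)$ is not contained in $\cR$. Since $\phi$ is proper and $\partial\phi(\wt E)\subset\overline\cR\cap\cH(1)$, the image must exit $\cR$ through one of the three bounding faces, and there is a ``last'' point of contact: that is, for one of the three surfaces, say $\cS\in\{\cP_{H,c}^-,\cH(1),\cP_{H,-c}^+\}$, one can find a translate or equidistant copy $\cS_s$ in the corresponding foliation (the foliation of $\hn3$ by tilted planes $\cP^{\pm}_{H,c'}$ of the same constant mean curvature, or the horosphere foliation $\cH(t)$) that is disjoint from $\phi(\wt E)$ on the far side and is first touched by $\phi(\wt E)$ at an interior point $\phi(x_0)$ as $s$ is moved back toward $\cS$. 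At such a first interior contact point, $\phi(\wt E)$ lies locally on the mean convex side of $\cS_s$, both surfaces have the same constant mean curvature $H$ (respectively $1\ge H>|H_\Sigma|$ for the horosphere case), and their tangent planes agree; the mean curvature comparison principle (equivalently, the interior maximum principle for the CMC equation) then forces $\phi(\wt E)$ to coincide with $\cS_s$ near $\phi(x_0)$, and by analytic continuation globally — which is impossible since $\phi(\wt E)$ is a proper immersed half-plane with nonempty boundary on $\cH(1)$ in the slab, not an entire tilted plane or horosphere. This contradiction shows $\phi(\wt E)\subset\cR$.

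For this to go through cleanly I need a few structural facts, all of which are either standard or already available: that the tilted planes $\{\cP^{\pm}_{H,c'}\}_{c'\in\R}$, together with the totally geodesic vertical plane and the equidistant planes on the other side, foliate $\hn3$ minus the boundary line, so that the ``sweep-out'' argument produces a genuine first contact; that each leaf is a complete CMC surface so the maximum principle applies at interior touching points; and that $\phi(\wt E)$ stays in $\cM(1)$ so that the horosphere $\cH(1)$ is one legitimate barrier and no contact can occur below $z=1$ except along $\partial\phi(\wt E)$ itself (here I would again invoke, exactly as in the proof of Claim~\ref{claimtransverse} and Proposition~\ref{propbound}, that $\varphi(E)$ cannot dip below a torus $\cT(t)$ it has already crossed, because the mean curvature vector of $\cT(t)$ has length $1>|H_\Sigma|$ and points into the cusp). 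The main obstacle is organizational rather than deep: one must be careful that the three foliations fit together so that ``escaping $\cR$'' really is detected by a first interior contact with one of the three barrier families, and that the boundary behavior on $\cH(1)$ does not create a spurious boundary contact — this is handled by the slab condition $|bx-ay|<c$, which keeps $\partial\phi(\wt E)$ strictly inside the open region bounded by $\cP_{H,c}^-$ and $\cP_{H,-c}^+$, so any first contact with those two planes or with $\cH(1)$ must be interior.
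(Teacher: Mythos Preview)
Your overall strategy --- a barrier sweep-out plus the maximum principle --- is correct in spirit, and your handling of the $\cH(1)$ face is fine (indeed $\phi(\wt E)\subset\cM(1)$ is already in hand). The real gap is in the sweep-out against the tilted planes $\cP_{H,c}^-$ and $\cP_{H,-c}^+$. You assert that one can find a leaf ``on the far side'' disjoint from $\phi(\wt E)$ and then move it back to a first interior contact, but neither step is justified. Every tilted plane $\cP^{\pm}_{H,c'}$ is unbounded and reaches arbitrarily large heights $z$, and so does $\phi(\wt E)$; a priori nothing prevents $\phi(\wt E)$ from meeting \emph{every} leaf of that foliation, so there may be no disjoint starting leaf at all. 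Even granting boundedness of the leaf-parameter $c'$ along $\phi(\wt E)$, the extremal value could be approached only as $z\to\infty$ and never attained, so no genuine first interior touching point need exist. Properness of $\phi$ into $\cM(1)$ does not rescue this, since $\cM(1)$ is itself noncompact in the relevant direction.

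The paper repairs this by replacing the noncompact tilted planes with a one-parameter family of \emph{bounded} barriers: the constant mean curvature $H$ caps $\cS_{H,r}$, equidistant to totally geodesic hemispheres $\cS_r$ sitting over a family of circles $C_r\subset\{bx-ay+c<0\}\cap\{z=0\}$ that converge to the boundary line as $r\to\infty$. For $r$ small, $\cS_{H,r}$ lies entirely below $\cH(1)$ and is therefore disjoint from $\phi(\wt E)$ --- this supplies the missing starting position. Each $\cS_{H,r}$ has compact intersection with $\cM(1)$ and is disjoint from $\partial\phi(\wt E)$ by construction, so the usual continuity argument produces a genuine first interior touching point, where the maximum principle yields a contradiction. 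Letting $r\to\infty$ then traps $\phi(\wt E)$ on the correct side of $\cP_{H,c}^-$ (and symmetrically of $\cP_{H,-c}^+$). The key idea you are missing is precisely this choice of bounded barriers with a built-in disjoint starting position.
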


\begin{proof}[Proof of Claim~\ref{claimregion}]
First, note that $\varphi(E) \subset \cup_{t\geq1} \cT(t)$.
Hence $\phi(\wt{E})$ is contained in $\cM(1)$, and thus it is
never below $\cH(1)$.
Next, we show that $\phi(\wt{E})$ is also never below $\cP_{H,c}^-$.
In the plane $\{z = 0\}$, let $\{C_r\}_{r > 0}$ be a continuous family of
circles of radius $r$, contained in the half plane $\{bx-ay + c < 0\}\cap \{z = 0\}$
and converging,
when $r\rightarrow \infty$, to the line $\{bx-ay + c =0\}\cap\{z = 0\}$.
Let $\cS_r$ be the upper half sphere of radius $r$,
centered in the center of the circle $C_r$. Then,
$\cS_r$ is a totally geodesic surface of $\hn3$ and the family
$\{\cS_r\}_{r> 0}$ converges, when $r\to \infty$,
to the vertical half-plane $\cP_{0,c}$.

\begin{figure}
\centering
\includegraphics[width=0.7\textwidth]{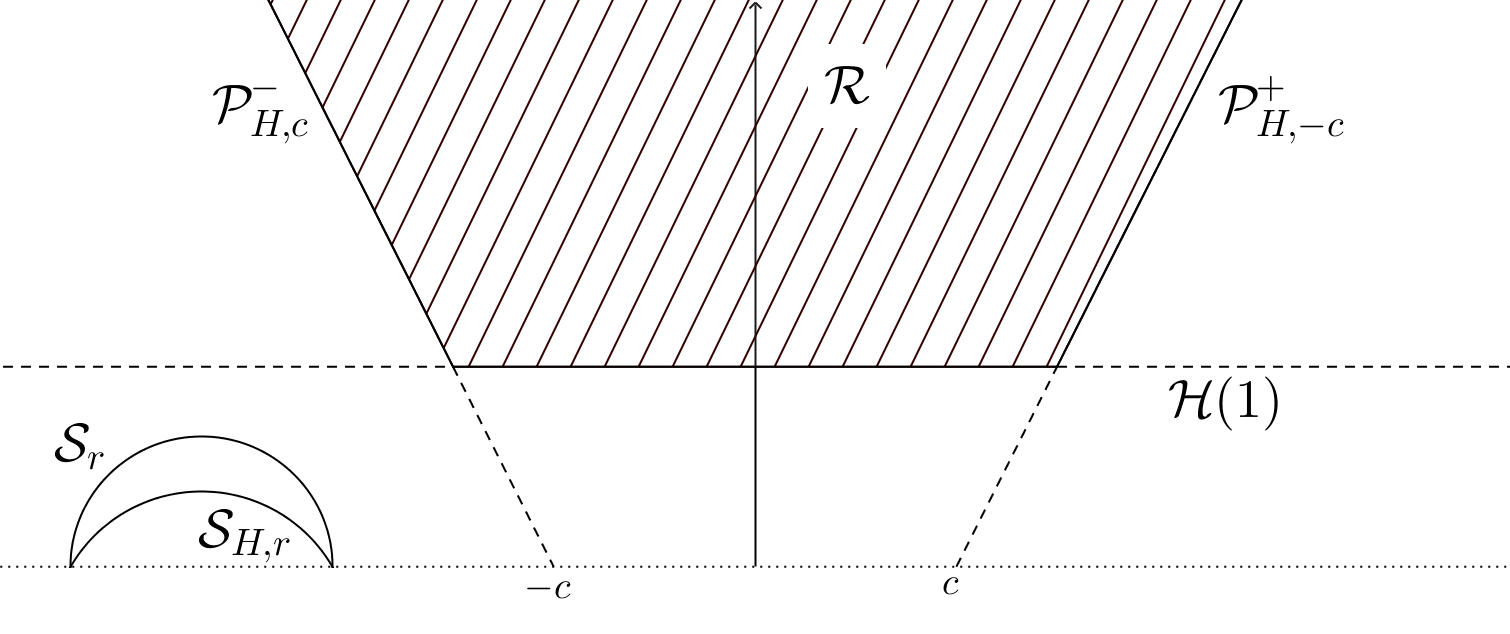}
\caption{The region $\cR$, limited by the three planes
$\cH(1),\,\cP_{H,c}^-$ and $\cP_{H,-c}^+$, and the constant mean curvature $H$
hyperspheres $\cS_{H,r}$.\label{figregion}}
\end{figure}

Let $\cS_{H,r}$
be an equidistant surface to $\cS_r$ with constant mean curvature $H$
pointing upwards,
see Figure~\ref{figregion}.
When $r\rightarrow \infty$,
$\cS_{H,r}$ converges to $\cP_{H,c}^-$.
Each $\cS_{H,r}$ does not intersect $\partial\phi(\wt{E})$,
by its construction. Moreover, for $r$ sufficiently small,
$\cS_{H,r}$ does not intersect $\cM(1)$, so it also
does not intersect $\phi(\wt{E})$. Thus, it
follows from the maximum principle that $\cS_{H,r} \cap \phi(\wt{E}) = \emptyset$
for all $r > 0$, hence there is no point of $\phi(\wt{E})$ below
$\cP_{H,c}^-$. The same argument proves that there is no point of $\phi(\wt{E})$
below $\cP_{H,-c}^+$, proving the claim.
\end{proof}

Consider the family of hyperbolic isometries of $\hn3$ defined, for each $t > 0$, by
\begin{equation}\label{sigmat}
\begin{array}{rccc}
\sigma_t\colon& \hn3 & \to & \hn3\\
& (x,y,z) & \mapsto & e^{-t}(x,y,z).
\end{array}
\end{equation}

\noindent The planes $\cP_0$ and $\cP_{H}^{\pm}$ are invariant
under the action of $\sigma_t$, for all $t> 0$, however, the same does not hold
for $c\neq 0$, since
$\sigma_t(\cP_{0,c}) = \cP_{0,e^{-t}c}$ and
$\sigma_t(\cP_{H,c}^\pm) = \cP_{H,e^{-t}c}^\pm$.
In particular, for every $c$ fixed, $\sigma_t(\cP_{0,c})$
converges to $\cP_0$ and $\sigma_t(\cP_{H,c}^\pm)$ converges to
$\cP_{H}^\pm$ when $t\rightarrow \infty$. Moreover,
$\sigma_t(\cH(1)) = \cH(e^{-t})$.

Let $\cR_t = \sigma_t(\cR)$ and let $\Et_t = \sigma_t(\phi(\Et))$. Then
$\cR_t$ is the
region of $\cM(e^{-t})$ bounded by pieces of
$ \cP_{H,e^{-t}c}^-,\cH(e^{-t})$ and $ \cP_{H,-e^{-t}c}^+$ and $\Et_t \subset \cR_t$.
We also let $\cR_\infty = \lim_{t\rightarrow \infty} \cR_t$ be
the region of $\hn3$ in between the two planes $\cP_H^-$ and $\cP_H^+$.
With this notation, we prove the next claim.

\begin{claim}\label{claimleaves}
Consider the limit set of the family $\{\Et_t\}_{t\geq 0}$,
$\Et_\infty = \{p \in \hn3\mid p = \lim_{n\rightarrow \infty}p_n,\,
p_n\in \Et_{t_n},\, \lim_{n\rightarrow \infty}t_n = \infty\} \subset \cR_\infty$.
Then, for each
$p \in \Et_\infty$ there exists a complete smooth surface $\cL \subset \Et_\infty$,
of constant mean curvature $H$, containing $p$. Moreover, $\cL$ is invariant
under a $1$-parameter group of parabolic isometries which contains
$\tau$.
\end{claim}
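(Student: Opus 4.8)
The plan is to produce the surface $\cL$ through $p$ as a limit of translates of $\phi(\wt E)$ and then to extract the parabolic symmetry from the structure of the limit. First I would set up the compactness. By Proposition~\ref{propbound}, $\phi(\wt E)\subset\cM(1)$ has uniformly bounded norm of its second fundamental form; since hyperbolic isometries preserve the norm of the second fundamental form, every scaled translate $\wt E_t=\sigma_t(\phi(\wt E))$ also has $\norma{A}\le C$ for a fixed $C$. A surface with uniformly bounded second fundamental form is, near any of its points, a graph of uniformly bounded geometry over a disk of fixed radius in its tangent plane. Hence, given $p\in\wt E_\infty$ with $p=\lim p_n$, $p_n\in\wt E_{t_n}$, $t_n\to\infty$, I would pass to a subsequence so that the tangent planes $T_{p_n}\wt E_{t_n}$ converge and so that the graphs of $\wt E_{t_n}$ over fixed-size disks in these planes converge in the $C^2$ topology (using elliptic estimates for the constant-mean-curvature graph equation, whose coefficients converge since the ambient metric is fixed — all $\wt E_t$ live in $\hn3$, only the sets move). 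The limit is a constant mean curvature $H$ disk through $p$; chaining such disks along paths (a standard ``analytic continuation of the limit'' argument, as in the lamination/limit constructions of Colding--Minicozzi and Meeks--Rosenberg) yields a complete, smooth, connected, properly immersed surface $\cL$ of constant mean curvature $H$ with $p\in\cL\subset\wt E_\infty$, and $\cL\subset\cR_\infty$ by Claim~\ref{claimregion} applied to each $\wt E_t$.

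Next I would extract the parabolic symmetry. Recall $\tau=\tau(kk_1,kk_2)$ satisfies $\tau(\phi(\wt E))=\phi(\wt E)$, and $\tau$ commutes with every $\sigma_t$ since both are diagonal/parabolic in the coordinates of \eqref{tauk} and \eqref{sigmat} (explicitly $\sigma_t\circ\tau\circ\sigma_t^{-1}$ is the parabolic translation by $e^{-t}(a,b,0)$, and $\tau$ itself is translation by $(a,b,0)$). Therefore $\tau(\wt E_t)=\sigma_t(\tau(\phi(\wt E)))=\wt E_t$ for all $t$, so $\tau$ preserves the limit set: $\tau(\wt E_\infty)=\wt E_\infty$. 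But this is not yet enough, because $\cL$ is only one sheet of $\wt E_\infty$; what I actually want is that a \emph{continuous} $1$-parameter group fixes $\cL$. The idea is that the limit has \emph{more} symmetry than $\phi(\wt E)$ did: the conjugates $\tau_r:=\sigma_r\circ\tau\circ\sigma_r^{-1}$, which are translations by $e^{-r}(a,b,0)$, do \emph{not} preserve $\phi(\wt E)$ for $r>0$, but they do preserve $\wt E_t$ for $t\ge r$ up to the rescaling — more precisely $\tau_r(\wt E_t)=\sigma_r\tau\sigma_{t-r}(\phi(\wt E))$, and since $\tau$ is an isometry commuting through, $\tau_r(\wt E_t)=\wt E_t$ whenever $r\le t$. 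Letting $t\to\infty$ shows $\tau_r(\wt E_\infty)=\wt E_\infty$ for every $r>0$. As $r$ ranges over $(0,\infty)$ the translation vectors $e^{-r}(a,b,0)$ fill out the whole ray $\{s(a,b,0)\mid 0<s<1\}$; closing up (using that the group of parabolic translations in the direction $(a,b,0)$ is a closed $1$-parameter subgroup and $\wt E_\infty$ is closed) we get that the full $1$-parameter parabolic group $P=\{\,(x,y,z)\mapsto(x+sa,y+sb,z)\mid s\in\R\,\}$, which contains $\tau$ (at $s=1$), satisfies $P(\wt E_\infty)=\wt E_\infty$.

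Finally I would promote this symmetry of $\wt E_\infty$ to a symmetry of the individual sheet $\cL$. For each $s$, $P_s(\cL)$ is a complete constant mean curvature $H$ surface contained in $P_s(\wt E_\infty)=\wt E_\infty$ and passing through $P_s(p)$. Consider the set $J=\{s\in\R\mid P_s(\cL)=\cL\}$; it is a subgroup of $\R$ containing $\Z$ (since $\tau=P_1$ preserves $\phi(\wt E)$, hence each $\wt E_t$, hence — by taking the local graphical limits equivariantly — the sheet $\cL$ through $p$ is carried by $P_1$ to the sheet through $P_1(p)$, and when $p$ is chosen to be a limit of $\tau$-translates this is $\cL$ itself; more robustly, replace $p$ by a point on $\cL$ where this holds and use connectedness). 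To see $J$ is also closed, note that if $s_j\to s$ with $P_{s_j}(\cL)=\cL$, then $P_s(\cL)$ and $\cL$ are complete constant-mean-curvature-$H$ surfaces that agree to infinite order at the limit point, so they coincide by unique continuation for the (quasilinear elliptic) mean curvature equation. A closed subgroup of $\R$ containing $\Z$ is either $\Z$ or $\R$; in the first case $\cL/P$ is a closed curve and $\cL$ is a $\tau$-invariant annular end, in the second $\cL$ is $P$-invariant outright. Either way $\cL$ is invariant under a $1$-parameter group of parabolic isometries containing $\tau$, which is the assertion. The main obstacle is the last step: controlling the multiplicity of the limit and making precise that the \emph{particular} sheet $\cL$ (not merely the set $\wt E_\infty$) inherits the symmetry — this is exactly where the unique continuation argument and a careful choice of the path along which the limit disks are chained are needed.
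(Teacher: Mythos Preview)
Your construction of the leaf $\cL$ through $p$ is essentially the paper's argument and is fine. The gap is in the symmetry step, and it is a real one.

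First, a computational slip: the assertion ``$\tau_r(\wt E_t)=\wt E_t$ whenever $r\le t$'' is false. From $\sigma_t\circ\tau=\tau_{e^{-t}}\circ\sigma_t$ one gets only that the translation by $e^{-t}(a,b,0)$ preserves $\wt E_t$; the group of parabolic translations preserving $\wt E_t$ is the cyclic group $e^{-t}\Z\cdot(a,b,0)$, not all of $e^{-r}(a,b,0)$ for $r\le t$. In particular $\tau=P_1$ does \emph{not} preserve $\wt E_t$ for $t>0$, so your reason for $\Z\subset J$ collapses. (Your conclusion that $P$ preserves the \emph{set} $\wt E_\infty$ is still salvageable: given $s>0$ and $p=\lim p_n$, take $k_n=\lfloor se^{t_n}\rfloor$ so that $p_n+k_ne^{-t_n}(a,b,0)\in\wt E_{t_n}$ and $k_ne^{-t_n}\to s$.)

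Second, and more seriously, even if you knew $\Z\subset J$ you would not be done: a closed subgroup of $\R$ containing $\Z$ could be $\Z$ itself, and in that case $\cL$ is \emph{not} invariant under a $1$-parameter group. Your unique-continuation argument shows $J$ is closed, but nothing in your proposal forces $J$ to be dense. The real issue is sheet-selection: $P$ preserves $\wt E_\infty$, but a priori can permute its leaves, and you have given no mechanism to prevent this.

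The paper supplies exactly that mechanism, and it uses an ingredient you never invoke: the finite area of $E$ (item~\ref{one} of Theorem~\ref{thmAsymp}). Working with the approximating graphs $G_n=G_n(\delta)\subset\wt E_{t_n}$ through $p_n$, the paper shows
\[
\tau_{e^{-t_n}}\bigl(G_n(\delta/3)\bigr)\subset G_n(\delta/2)
\]
for $n$ large. The proof is by contradiction: if the small parabolic translate of $G_n(\delta/3)$ landed on a \emph{different} local sheet of $\wt E_{t_n}$, one could push this down via $\Pi$ to produce infinitely many pairwise disjoint disks in $E$ of area bounded below, contradicting finite area. Iterating this inclusion, each orbit arc $\{\tau_s(p_n):s\in\R\}$ meets $G_n$ in arbitrarily many points as $n\to\infty$, so the limit orbit arc through $p$ has infinite-order contact with $G_\infty$ and hence lies in $\cL$. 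This gives $P$-invariance of $\cL$ directly, without any closed-subgroup dichotomy.

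So the missing idea is: use finiteness of $\A(E)$ to force the $\tau_{e^{-t_n}}$-translate of the local graph to stay on the \emph{same} local graph, at the level of the approximants.
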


\begin{proof}[Proof of Claim~\ref{claimleaves}]
Let $p \in \Et_\infty$.
Since $\wt{E}_t = \sigma_t(\phi(\Et))$ and $\sigma_t$ is an
ambient isometry, it follows that
$\Et_t$ is a constant mean curvature $H$
surface with uniformly bounded norm of the second fundamental form.
Hence there is a $\delta\in (0,1)$ such that for every $t> 0$ and
every $q \in \Et_t$ of distance at least 1 from $\partial \Et_t$,
there exists some closed disk component of $q$ in $\Et_t \cap \overline{B}_{\hn3}(q,\delta)$,
with boundary contained
in $\partial B_{\hn3}(q,\delta)$, which is a graph $G^q$ in exponential coordinates,
over a disk in $T_q\Et_t$, with graphing function having gradient less than 1.
We will also assume that $\de$ is chosen sufficiently small so that,
for $r\in (0,\de]$, each of
the spheres $\partial B_{\hn3}(q,r)$ intersects $G^q$ transversely in a
simple closed curve; now
define $G^q(r)$ be the corresponding closed subdisk of $G^q=G^q(\de)$ in the
closed ball of radius $r\in (0,\de]$
and centered at $q$.

In order to prove the existence of $\cL$ as claimed, consider the sequences
$\{t_n\}_{n\in\N} \subset \R$ and $\{p_n\}_{n\in\N}$ such that
$\lim_{n\to \infty} t_n = \infty$, $\lim_{n\to\infty} p_n = p$
and, for every $n\in\N$, $p_n \in \Et_{t_n}$. For each $n\in\N$ and 
$r\in (0,\de]$, let
$G_n(r)=G^{p_n}(r)\subset \Et_{t_n}$ be the graphs
described above, based at $p_n$; without loss of generality,
we may assume that every $p_n \in \Et_{t_n}$ has distance at
least 1 from $\partial E_t$, and so
the graph $G_n:=G_n(\de)$ exists.
Then $\{G_n\}_{n\in\N}$,
up to a subsequence, converges to a constant mean
curvature $H$ graph $G_\infty\subset \hn3$, with $p \in G_\infty$; by its
construction it follows that $G_\infty \subset \Et_{\infty}$. Since $\delta$ as
above was uniform, we can iterate this argument to extend $G_\infty$
to a complete surface $\cL\subset \Et_\infty$,
containing $p$ and of constant mean curvature $H$.

For $s \in \R$, let
$\tau_{s}\colon \hn3\rightarrow\hn3$ be the parabolic isometry
defined by $$\tau_{s}(x,y,z) = (x+as,y+bs,z);$$ recall that $\tau_1=\tau$
corresponds to the generating covering
transformation $\wh{\tau}$ of $\wt{E}$ and $\tau$ leaves invariant $\phi(\wt{E})$.
Our next argument is to
prove that for all $s \in \R$, $\tau_s(\cL) \subset \cL$, which finishes
the proof of Claim~\ref{claimleaves}.

Since
$\sigma_t\circ \tau = \tau_{e^{-t}}\circ \sigma_t$ and
$\tau(\phi(\Et)) = \phi(\Et)$, it follows that
$$\tau_{e^{-t_n}}(\Et_{t_n})
= \tau_{e^{-t_n}}(\sigma_{t_n}(\phi(\Et)))
= \sigma_{t_n}(\tau(\phi(\Et)))= \Et_{t_n};
$$

\noindent in particular, $\tau_{e^{-t_n}}$ leaves invariant $\Et_{t_n}$.

Consider the graphs $G_n(r)$ as above.
We claim that
there exists an $N_0\in \N$ such that for $n\geq N_0$,
\begin{equation} \label{eq35}
\tau_{e^{-t_n}}(G_n(\delta/3))\subset
G_n(\delta/2).
\end{equation}
Arguing by contradiction, suppose the above statement fails for some
$n$ arbitrarily large,
and so, after replacing by a subsequence, we may assume that
equation~\eqref{eq35} fails for all $n$ sufficiently large.
Note that the isometries $\tau_{e^{-t_n}}$ converge uniformly on
compact subsets of $\hn3$ to the identity map as $n$ approaches infinity.
In particular, for $n$ sufficiently large, we have that
$\tau_{e^{-t_n}}(\overline{B}_{\hn3}(p_n,\delta/3))\subset
{B}_{\hn3}(p_n,\delta/2))$.
However, since $\wt{E}_{t_n}$ is invariant under $\tau_{e^{-t_n}}$,
it follows that $\tau_{e^{-t_n}}(G_n(\delta/3))\subset \wt{E}_{t_n}$; hence,
there exist disks $F_n,\,H_n \subset \wt{E}$ such that
$$\sigma_{t_n}(\phi(F_n)) = G_n(\delta/2),\quad
\sigma_{t_n}( \phi(H_n)) = \tau_{e^{-t_n}}(G_n(\delta/3)).$$
Since $G_n(\delta/2)$ and $\tau_{e^{-t_n}}(G_n(\delta/3))$ have their boundaries in
the disjoint respective spheres
${\partial B}_{\hn3}(p_n,\delta/2))$ and
$\tau_{e^{-t_n}}(\partial\overline{B}_{\hn3}(p_n,\delta/3))$,
then elementary separation properties imply that
$H_n\subset F_n$, $F_n\subset H_n$ or $H_n\cap F_n=\emptyset$.
Note that $H_n\not \subset F_n$ since~\eqref{eq35} is assumed to fail, and
$F_n\not \subset H_n$ since
$\partial F_n$ is clearly disjoint from $H_n$; hence, $H_n\cap F_n=\emptyset$.

The property that $H_n\cap F_n=\emptyset$ gives that the generating covering
transformation $\wh{\tau}$ of the
covering space $\Pi\colon \wt{E}\to E$ induced by $\tau$ is such that
$\wh{\tau}^{-1}(H_n)\subset F_n$
is disjoint from $H_n$. In particular, $\wh{\tau}(H_n)$ is disjoint
from $H_n$, which implies that
$\Pi|_{H_n}$ is injective, for $n$ sufficiently large.
Since $\lim_{n\to \infty} t_n=\infty$, then, after replacing by a subsequence,
we may assume that the projected disks $\Pi(H_n)\subset E$ are pairwise disjoint
for $n\in \N$.
Since the areas of the disks $\Pi(H_n)$ are bounded from below
by some $\ve>0$
and there are an infinite number of them,
we conclude that the area of $E$ is infinite; this contradicts
item~\ref{one} of Theorem~\ref{thmAsymp}, proving that
equation~\eqref{eq35} holds for $n$ sufficiently large.

Next, we prove the claim that $\cL$ is invariant under
the $1$-parameter group of isometries $\{\tau_s\}_{s\in\R}$, that is,
$\tau_s(\cL) = \cL$ for every $s \in \R$. Let
$\Gamma_n = \{\tau_s(p_n)\mid s\in\R\}$ denote
the orbit curve of each $p_n$ through
the action of $\{\tau_s\}_{s\in\R}$. Also letting $\Gamma = \{\tau_s(p)\mid s\in\R\}$
be the orbit curve of $p$, then the curves $\Gamma_n$ converge to $\Gamma$
uniformly as $n\to \infty$.

The arguments used to derive equation~\eqref{eq35} also show
that for any $k\in\N$
there exists an $N_k\in \N$ such that for $n\geq N_k$,
\begin{equation} \label{eq36}
\tau_{e^{-t_n}}(G_n(\delta/(k+1)))\subset G_n(\delta/k).
\end{equation}
After iteration of~\eqref{eq36}, we have that for any positive integer
$j\leq k$, when $n$ is sufficiently large,
$$
\tau_{e^{-jt_n}}(G_n(\delta/(k+1)))\subset G_n. $$
Hence, for any positive integer
$j< k$, $ \tau_{e^{-jt_n}}(p_n) \in G_n$ when $n$ is sufficiently large. Therefore,
as $k$ is arbitrary, $\G_n$ intersects $G_n$ in an arbitrarily large number of points as $n\to \infty$.
Since the analytic arcs $\G_n$ converge smoothly to the analytic arc $\G$
on compact subsets of $\hn3$, the analytic curve $\G$ has infinite order contact with the disk
$G_\infty$, which implies that $\G$ is contained in $\cL$.
Since $p$ was chosen arbitrarily, this proves that the orbit of
any $q\in \cL$, under the action of $\{\tau_s\}_{s\in\R}$ is also contained in $\cL$,
which completes the proof of Claim~\ref{claimleaves}.
\end{proof}

\begin{claim}\label{newclaim}
For any leaf $\cL$ as given in Claim~\ref{claimleaves}, then
either $\cL = \cP_H^+$ or $\cL = \cP_H^-$.
\end{claim}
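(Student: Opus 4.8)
The leaf $\cL$ is a complete, constant mean curvature $H$ surface contained in the slab region $\cR_\infty$ between $\cP_H^-$ and $\cP_H^+$, and by Claim~\ref{claimleaves} it is invariant under the $1$-parameter parabolic group $\{\tau_s\}_{s\in\R}$ of translations in the direction $(a,b,0)$. The idea is that invariance under this parabolic group forces $\cL$ to be one of the standard equidistant/tilted planes. First I would pass to the quotient: since $\{\tau_s\}_{s\in\R}$ acts by translations along the horizontal line direction $(a,b)$, the surface $\cL$ descends to a curve $c$ in the quotient which we may identify with a half-plane model $\hn2 = \{(w,z)\mid z>0\}$ (coordinates transverse to the orbit direction), and $\cL$ is the ``cylinder'' over $c$. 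The constant mean curvature condition on $\cL$ becomes an ODE for the curve $c$ in $\hn2$; its solutions are exactly the curves of constant geodesic curvature, which are (pieces of) geodesics, horocycles, equidistant curves, or circles in $\hn2$. Thus $\cL$ is one of: a vertical totally geodesic plane, a horosphere $\cH(t)$, a tilted equidistant plane $\cP_{H,c}^\pm$, or a hypersphere of constant mean curvature $H$.

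Next I would eliminate all but the desired two possibilities using the containment $\cL\subset \cR_\infty$ and completeness. A constant mean curvature $H$ hypersphere is compact (bounded), so it cannot be a complete $\{\tau_s\}$-invariant surface of the required noncompact type — rule it out. A horosphere $\cH(t)$ has mean curvature $1\neq H$ — rule it out. A vertical totally geodesic plane has mean curvature $0$; this is excluded unless $H=0$, and even when $H=0$ one must check: in that case the relevant limit planes $\cP_H^\pm=\cP_0^\pm$ coincide with the totally geodesic $\cP_0$, so the conclusion still reads $\cL=\cP_H^+$ or $\cL=\cP_H^-$, consistent with the Collin–Hauswirth–Rosenberg picture. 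The remaining case is that $\cL$ is an equidistant plane to some vertical geodesic plane $\cP_{0,c'}$, tilted at the angle with $\cos=H$; that is, $\cL=\cP_{H,c'}^+$ or $\cL=\cP_{H,c'}^-$ for some $c'\in\R$. Finally, since $\cL\subset\cR_\infty$ and $\cR_\infty$ is precisely the closed slab bounded by $\cP_H^-$ and $\cP_H^+$, a tilted plane $\cP_{H,c'}^\pm$ fits inside this slab only for $c'=0$: a nonzero $c'$ shifts the plane so that it exits the slab (one checks the foot-line $\{bx-ay+c'=0\}$ must coincide with $\{bx-ay=0\}$, forcing $c'=0$, and the sign $\pm$ must match so that the plane lies on the correct side). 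Hence $\cL=\cP_H^+$ or $\cL=\cP_H^-$.

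**Main obstacle.** The delicate point is the classification-by-invariance step: carefully reducing the CMC PDE on the $\{\tau_s\}$-invariant surface $\cL$ to the ODE for constant geodesic curvature curves in $\hn2$, and then matching each solution type to the correct hyperbolic surface (geodesic plane, horosphere, equidistant plane, hypersphere) together with their mean curvature values and the orientation of the mean curvature vector. One must be careful that $\cL$ is only \emph{a priori} a leaf of a lamination of $\Et_\infty$ rather than an embedded surface, so I would first invoke the uniform second-fundamental-form bound (from Proposition~\ref{propbound}, which transfers through the isometries $\sigma_t$) to guarantee $\cL$ is a genuine properly immersed complete surface, then use that a complete immersed constant-geodesic-curvature curve in $\hn2$ with the given curvature is one of the four standard curves, and that only the equidistant-curve case yields a noncompact leaf of mean curvature $H\in(0,1)$ sitting inside the slab. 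Handling the boundary cases $H=0$ and keeping track of signs $\pm$ in the slab-fitting argument is where I expect to spend the most care.
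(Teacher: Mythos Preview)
Your reduction step is incorrect, and this causes you to miss exactly the case that the paper's proof is built to exclude. You write that ``the constant mean curvature condition on $\cL$ becomes an ODE for the curve $c$ in $\hn2$; its solutions are exactly the curves of constant geodesic curvature.'' This is false. For a parabolic--invariant surface the two principal curvatures are (i) the geodesic curvature $\kappa$ of the profile curve in the totally geodesic $\hn2$, and (ii) the normal curvature in the orbit direction; since the orbits are horocycles with curvature vector pointing vertically, this second term equals the inner product of the unit normal with the upward direction. Hence the equation is
\[
\kappa \,+\, \langle N,\,e_3\rangle \;=\; 2H,
\]
not $\kappa=\text{const}$. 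The true ODE (classified by Gomes~\cite{gom2}) admits, for the relevant value of $H$, an additional complete solution whose profile curve is symmetric about $\cP_0$, attains an isolated maximal height, and is entirely contained in $\cR_\infty$. This is the surface the paper calls $\cS$, and nothing in your list of ``geodesic plane / horosphere / tilted plane / hypersphere'' accounts for it.

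Ruling out $\cL=\cS$ is the whole content of the paper's argument. It cannot be done by the slab containment alone (since $\cS\subset\cR_\infty$), nor by compactness or mean-curvature-value considerations. The paper instead exploits the geometry of $\cS$ near its height maximum: a nearby horosphere $\cH(h_0)$ cuts a graphical neighbourhood of the maximum in two disjoint arcs, and by pulling back through the approximating pieces $\Et_{t_n}$ one obtains, for large $n$, two disjoint homotopically nontrivial closed curves in $E$ mapping into the same torus $\cT(e^{t_n}h_0)$. This contradicts the uniqueness statement of Claim~\ref{claimtransverse}. Your proposal contains no mechanism of this kind, so as written it does not establish the claim.
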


\begin{proof}[Proof of Claim~\ref{newclaim}]
Any surface invariant under a $1$-parameter group $\mathbb{G}$
of parabolic isometries is
called a \emph{parabolic-invariant surface}, and the intersection of
any such surface with a totally geodesic surface perpendicular to
the orbit curves of $\mathbb{G}$ is called a {\em profile curve}.
The classification of constant mean curvature parabolic-invariant surfaces
is well-explained by Gomes in Chapter~3 of his
doctoral thesis~\cite{gom2}.
It follows from this classification that a constant mean
curvature parabolic invariant
surface contained in $\cR_{\infty}$ is either $\cP_H^+$, $\cP_H^-$ or,
after a homothety, it is a certain surface $\cS$. Furthermore,
such a surface $\cS$ has a specific value of its mean curvature,
is symmetric with respect to the plane $\cP_0$ and attains a maximal height,
which is isolated in any profile curve,
see Figure~\ref{figgraph}.
Our next argument rules out the possibility $\cL = \cS$,
proving the claim.

Arguing by contradiction, suppose $\cL = \cS$. Then
there exists $p \in \cL$ where the height function
of $\cL$ attains its maximal value; in particular $T_p\cL$ is a horizontal plane
in the sense that $T_p\cL = T_p\cH(h_1)$,
for some $h_1>0$.

The uniform bound on the second fundamental form of $\phi(\Et)$ yields a
$\delta >0 $ such that for all $t > 0$ and every point $x \in \Et_t$,
a neighborhood of $\Et_t$ containing $x$ is a graph over a
disk of radius $\delta$ in $T_x\Et_t$. We may assume such $\delta$ is
sufficiently small so that
a neighborhood $G\subset \cL$ containing $p$ is a vertical
graph of small gradient over the horizontal disk $D = B(p,\delta) \subset \cH(h_1)$.
Hence, the height function $f$ of $G$ may be written as
$f\colon D \to \R$, a function over $D$ in such a way that
$$G = \{(x,y,f(x,y))\mid (x,y,h_1) \in D\}.$$

\begin{figure}
\centering
\includegraphics[width = 0.48\textwidth]{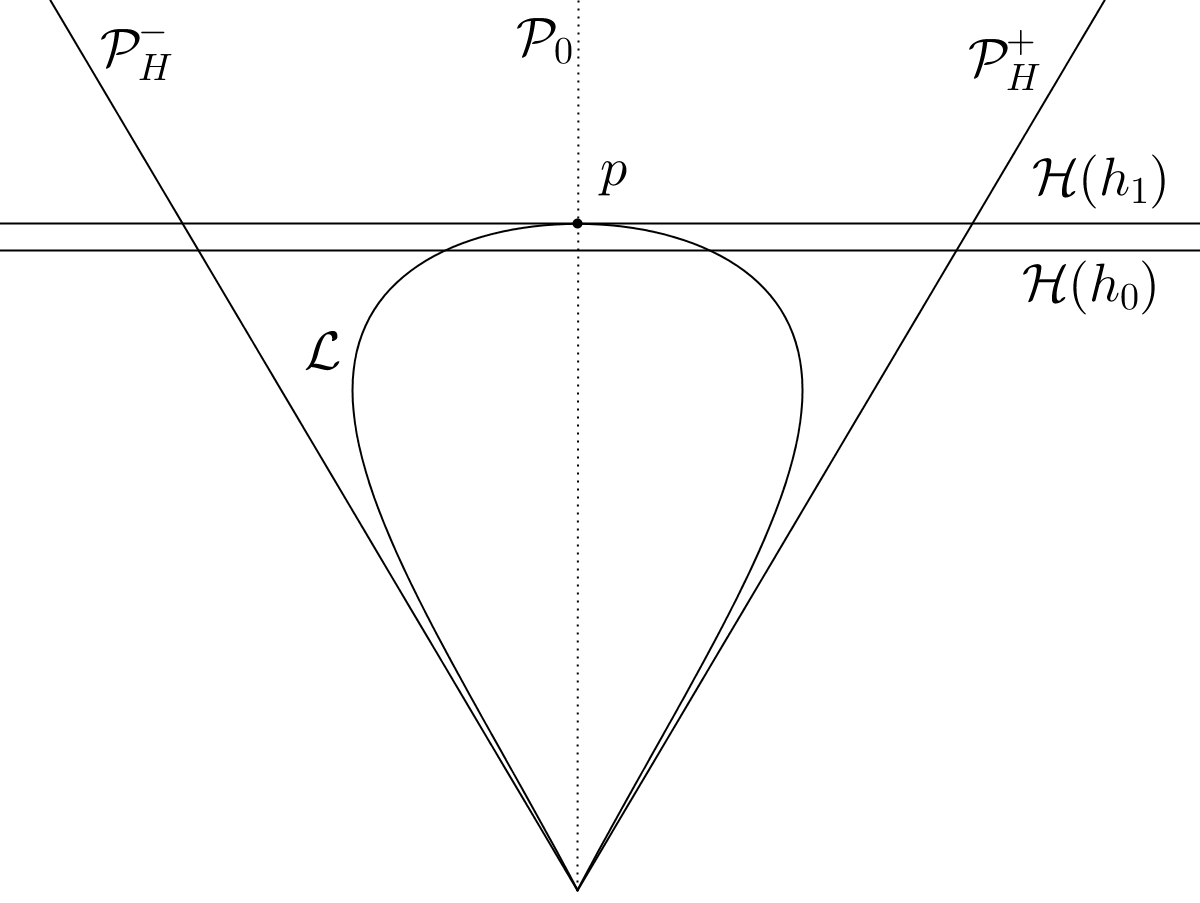}
\includegraphics[width = 0.48\textwidth]{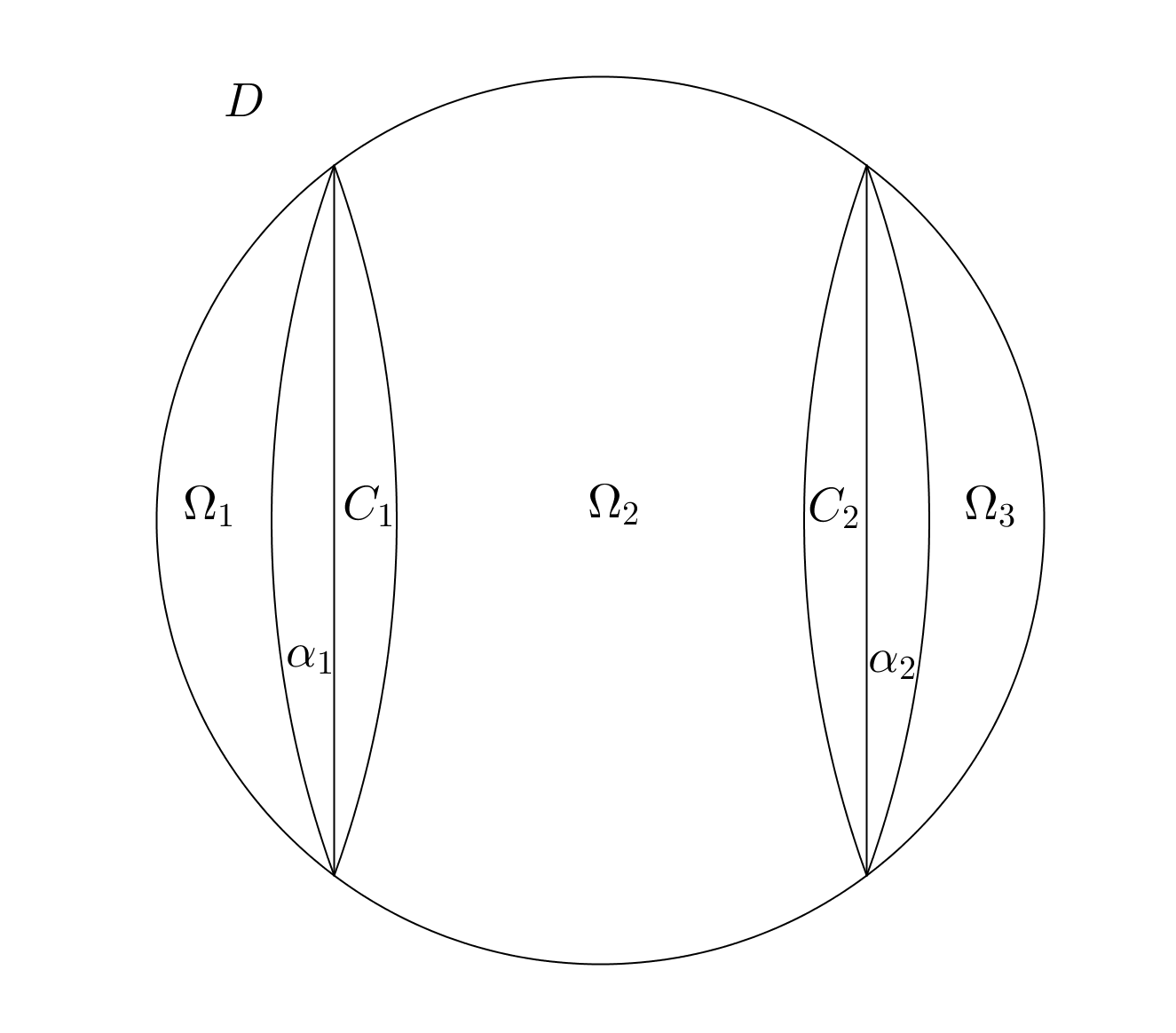}
\caption{If $\cL = \cS$, a neighborhood $G$ of $\cL$ containing $p$ is a horizontal
graph over a domain $D$ in the plane $\cH(h_0)$'.\label{figgraph}}
\end{figure}

Consider a sequence of points $p_n \in \Et_{t_n}$ such that
$\lim_{n\to \infty} p_n = p$ and $\lim_{n\to \infty}t_n = \infty$.
After passing to a subsequence,
we may assume that there are neighborhoods $G_n \subset \Et_{t_n}$
containing $p_n$ which are also vertical graphs over $D$, in the sense that
there exist functions $f_n\colon D \to \R$ such that
$$G_n = \{(x,y,f_n(x,y))\mid (x,y,h_1) \in D\}.$$

\noindent Since $G_n$ converges smoothly to $G$, the sequence $(f_n)_{n\in\N}$
converges in the $C^{2,\alpha}$ norm to $f$.

Let $\Gamma$ be the profile curve of $\cL$ through $p$. Since the
height function of $\Gamma$ attains a strict local maximum at $p$,
there is $h_0 < h_1$ such that $\cH(h_0)$ intersects $\Gamma$ in two points;
in particular $\cH(h_0)$ intersects $G$ in
two disjoint horizontal segments $\alpha_1$, $\alpha_2$,
that separate $G$ into three disjoint regions (see Figure~\ref{figgraph}, right).
Let $\ve = \frac{h_1 - h_0}{2} > 0$, then there exist $n_0 \in \N$ and open,
connected and disjoint domains
$\Omega_1,\,\Omega_2,\,\Omega_3 \subset D$
such that for every $n \geq n_0$ it holds:
$$\left\{
\begin{array}{l}
f_n < h_0- \ve \ \ \text{in}\ \Omega_1 \cup \Omega_3 \\
f_n > h_0 + \ve \ \ \text{in} \ \Omega_2.
\end{array}\right.$$

\noindent In particular, $D \setminus (\Omega_1 \cup \Omega_2 \cup \Omega_3)$
is composed by two disjoint, connected domains $C_1,\,C_2$ such that
$\alpha_i \subset C_i$.

For each $n \geq n_0$, consider $f_n^{-1}(\{h_0\})\subset D$.
It contains two connected curves,
one contained in $C_1$ and the other contained in $C_2$, which map via $f_n$
to two disjoint curves
$\gamma^1_n,\,\gamma^2_n\subset G_n\cap \cH(h_0)$.
Using the same arguments of
the proof of Claim~\ref{claimleaves}, we can fix points $q^1_n \in \gamma^1_n$,
$q^2_n \in \gamma^2_n$ and obtain that
the action of $\tau_{e^{-t_n}}$ is such that both
$\tau_{e^{-t_n}}(q^1_n) \in \gamma^1_n$ and $\tau_{e^{-t_n}}(q^2_n) \in \gamma^2_n$.

Fix any such $n \geq n_0$ and let
$$\begin{array}{c}\Gamma_n^1 = \sigma_{-t_n}(\gamma_n^1)\subset \phi(\Et),\quad
 \Gamma_n^2 = \sigma_{-t_n}(\gamma_n^2)\subset \phi(\Et),\\
\ \\
\wh{q}_n^1 = \sigma_{-t_n}(q_n^1) \in \Gamma_n^1,\quad
\wh{q}_n^2 = \sigma_{-t_n}(q_n^2) \in \Gamma_n^2.\end{array}$$

\noindent Since $\tau_{e^{-t_n}}(q^1_n) \in \gamma^1_n$, it follows that
$\tau(\wh{q}_n^1) \in \Gamma_n^1$, hence $\varphi^{-1}(\Pi(\Gamma_n^1))$ contains a
nontrivial closed curve $\beta_n^1$ in $E$.
Analogously, $\varphi^{-1}(\Pi(\Gamma_n^2))$ also contains
a nontrivial closed curve $\beta_n^2$ in $E$.

We next show that $\beta_n^1\cap\beta_n^2 = \es$ for $n$ sufficiently large;
this proves the claim since $\Gamma_1,\,\Gamma_2 \subset \cH(e^{t_n}h_0)$ gives
$\varphi(\beta_n^1),\,\varphi(\beta_n^2) \subset
\varphi^{-1}\cT(e^{t_n}h_0)$, which is a contradiction
with Claim~\ref{claimtransverse} (since $h_0$ can be chosen generically).
Note that $G(u,v)$, $\sigma_t$ and $\tau_s$ preserve the left invariant Gauss
map (see the discussion in the proof of Proposition~\ref{propbound})
of $\varphi(E)$, $g\colon E \to \sn2$. Moreover, when
$n \to \infty$, $g\vert_{\beta_n^1}$ and $g\vert_{\beta_n^2}$ converge respectively
to the values of the Gauss map of $\cL$ along $\alpha_1$ and $\alpha_2$,
which are distinct.
In particular, for $n$ large enough we have
$\beta_n^1 \cap \beta_n^2 = \es$. As explained before, this
is a contradiction that proves the claim.
\end{proof}

\begin{claim}\label{claimasymp}
There exists $t_* > 1 $ such that $\phi(\Et) \cap \cM(t_*)$ is a topological
half-plane which is a horizontal graph over $\cP_0$.
\end{claim}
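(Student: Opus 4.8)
The plan is to combine the convex-hull containment from Claim~\ref{claimregion} with the description of the limit set $\Et_\infty$ obtained in Claims~\ref{claimleaves} and~\ref{newclaim}, and then ``run the blow-down in reverse'' to transfer graphical control from infinity down to a finite height $t_*$. First I would record what the previous claims give us: by Claim~\ref{claimregion}, $\phi(\Et)$ lies in the region $\cR$ bounded by $\cP_{H,c}^-$, $\cH(1)$ and $\cP_{H,-c}^+$; rescaling by $\sigma_t$ and passing to the limit, every point of $\Et_\infty$ lies on a complete parabolic-invariant constant-mean-curvature-$H$ leaf $\cL$, and by Claim~\ref{newclaim} each such $\cL$ is either $\cP_H^+$ or $\cP_H^-$. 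Since $\cR_\infty$ is the slab between these two tilted planes and both are horizontal graphs over the vertical plane $\cP_0$ (each being an equidistant surface to $\cP_0$ meeting $\{z=0\}$ in the line $\{bx-ay=0\}$), the limit set $\Et_\infty$ is contained in the union $\cP_H^+\cup\cP_H^-$, and in particular projects injectively — leaf by leaf — onto $\cP_0$ with everywhere-transverse (in fact uniformly transverse, by the uniform second-fundamental-form bound from Proposition~\ref{propbound}) tangent planes.

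Next I would argue by contradiction: suppose no such $t_*$ exists, i.e. for every $t>1$ the surface $\phi(\Et)\cap \cM(t)$ fails to be a horizontal graph over $\cP_0$. There are two ways this can fail — either the projection $\pi\colon \cM(t)\to \cP_0$ restricted to $\phi(\Et)\cap\cM(t)$ is not injective, or it is not a local diffeomorphism (the tangent plane becomes ``vertical'', i.e. contains the parabolic orbit direction $\partial_x$-like vector, so that $d\pi$ is singular). In either case one produces a divergent sequence of ``bad points'' $q_n\in\phi(\Et)$ with $z$-coordinate tending to $\infty$: in the non-injective case, pairs $q_n, q_n'$ with $\pi(q_n)=\pi(q_n')$ and $q_n\ne q_n'$; in the singular case, points where the angle between $T_{q_n}\phi(\Et)$ and the vertical direction $\grad(R)$-type normal drops below a threshold. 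Applying $\sigma_{t_n}$ with $t_n$ chosen so that $\sigma_{t_n}(q_n)$ stays in a fixed compact region, and using the uniformly bounded second fundamental form to extract a convergent subsequence of graphs $G_n\subset\Et_{t_n}$, one obtains a limit piece of a leaf $\cL\subset\Et_\infty$ on which either $\pi$ is non-injective or $d\pi$ is singular. But $\cL$ is $\cP_H^+$ or $\cP_H^-$, which are genuine horizontal graphs over $\cP_0$ with tangent planes uniformly bounded away from vertical — contradiction. Hence the bad set is bounded, i.e. there is $t_*>1$ beyond which $\phi(\Et)\cap\cM(t_*)$ is an embedded horizontal graph over (a subdomain of) $\cP_0$; since $\phi(\Et)$ is a proper half-plane and $\partial\phi(\Et)$ has bounded ``$bx-ay$'' range, this graph is all of $\cP_0\cap\cM(t_*)$-over-domain, a topological half-plane.

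The main obstacle I expect is the non-injectivity alternative: ruling out singular tangent planes is a clean limiting argument, but showing global injectivity of $\pi|_{\phi(\Et)\cap\cM(t_*)}$ requires care, because two sheets of $\phi(\Et)$ could in principle approach each other from opposite sides without their tangent planes degenerating. The key device to handle this is that the limit leaves through distinct limit points must coincide (both equal $\cP_H^+$, or both equal $\cP_H^-$, and one must further exclude the ``one sheet asymptotic to $\cP_H^+$ while another is asymptotic to $\cP_H^-$'' scenario using the separation/ordering properties inside the slab $\cR$ together with the $\sigma_t$-invariance): if two sheets had distinct $\pi$-images coinciding, the rescaled limit would be two distinct parabolic-invariant leaves in $\cR_\infty$ sharing a point, forcing them equal by the maximum principle, and then a standard unique-continuation / foliation argument shows the two sheets must already coincide at finite height for $t$ large. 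I would also need the elementary fact — used implicitly above — that the multiplicity $k$ is finite (so finitely many sheets), which follows since $\varphi_*([\alpha_1])=k(k_1,k_2)$ with $k\in\N$ fixed. Once injectivity and regularity of $\pi$ are established on $\cM(t_*)$, identifying the image as a half-plane and the domain as a topological half-plane is immediate from properness of $\phi$.
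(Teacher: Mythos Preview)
Your overall blueprint --- rescale via $\sigma_t$, use Claims~\ref{claimleaves} and~\ref{newclaim} to control the limit, then pull the control back to finite height --- is correct and matches the paper. Your treatment of the singular-tangent-plane alternative is essentially the paper's argument, phrased geometrically rather than through the left invariant Gauss map: the paper observes that for $t_0$ large the Gauss map of $\phi(\Et)\cap\cM(t_0)$ lands in small disjoint neighborhoods $V^+,V^-$ of the constant values $v^+,v^-$ assumed by the Gauss maps of $\cP_H^+,\cP_H^-$, which is exactly your ``tangent planes uniformly bounded away from vertical.''

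The genuine gap is in your non-injectivity argument. Your proposed mechanism --- rescale two points sharing a $\pi$-fiber, obtain limit leaves that must coincide by the maximum principle, then invoke ``unique continuation / foliation'' to conclude the sheets already coincide at finite height --- does not work: two distinct local sheets of an immersed surface can converge to the same limit leaf without ever touching (think of two disjoint graphs over $\cP_0$ both asymptotic to $\cP_H^+$), so no unique-continuation conclusion is available. Your ``separation/ordering'' remark for the mixed $\cP_H^+/\cP_H^-$ case is likewise not an argument. The paper bypasses all of this with one observation you are missing: since $\phi(\Et)\cap\cM(t_0)$ is \emph{connected} (it is a topological half-plane, via Claim~\ref{claimtransverse}), its Gauss-map image is connected and hence lies entirely in one of $V^+,V^-$. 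That single step makes $\pi$ a proper local diffeomorphism from a half-plane to the simply connected half-plane $\cP_0\cap\cM(t_0)$, and covering-space theory then gives global injectivity for free --- no pointwise two-sheet analysis is needed. This connectedness step is also what feeds the multiplicity-one conclusion used immediately after the claim, so it is not optional.
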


\begin{proof}
Fix $\ve>0$.
By Claim~\ref{newclaim}, there exist $t_0$ depending on $\ve$
such that the left invariant Gauss map $g$
of $\phi(\Et) \cap \cM(t_0)$ lies in a $\ve$-neighborhood of $v^+$, $v^-$,
which are the values assumed by the left invariant Gauss maps
of $\cP_H^+$, $\cP_H^-$. We denote such neighborhoods respectively by
$V^+,\,V^-$ and note that, if $\ve$ is sufficiently small,
$V^+\cap V^- = \es$ and both $V^+$, $V^-$ stay at a positive distance to the
great circle in $\sn2$ of vectors perpendicular to the image vector
of the Gauss map of $\cP_0$.

Note
that $t_0$ is such that $\cT(t)$ intersects $\varphi(E)$ transversely for
all $t \geq t_0$.
Let $\alpha \subset E$ be the unique homotopically nontrivial closed curve in
$\varphi^{-1}(\cT(t_0))$ given by Claim~\ref{claimtransverse} and let
$E^\prime\subset E$ be the subannular end of $E$ determined by
$\alpha$, then $\varphi(E^\prime) = \varphi(E) \cap \cup_{t\geq t_0} \cT(t)$,
hence if $\Et^\prime = \Pi^{-1}(E^\prime)$, then it is
topologically a half plane and $\phi^{-1}(\cM(t_0)) = \Et^\prime$.

To finish the proof of the claim, note that
the image $g(\phi(\Et^\prime))$ of the Gauss map of $\phi(\Et^\prime)$
is connected; hence
either $g(\phi(\Et^\prime)) \subset V^+$ or $g(\phi(\Et^\prime))\subset V^-$.
In either case, it follows that $\phi(\Et^\prime) = \phi(\Et)\cap \cM(t_0)$
is a graph over $\cP_0$.
\end{proof}

Note that when $H = 0$, $\Et_{\infty} = \cP_0$ by Claim~\ref{claimleaves}. Hence
Claim~\ref{claimasymp} gives that $\Et_\infty$ contains a single leaf
$\cL = \cP_0$, from which
the asymptotic behavior follows. This special case was proved previously
by Collin, Hauswirth and Rosenberg in \cite{chr2}, Theorem 1.1.

Assume now that $H \in (0,1)$ and that $\phi(\Et)$ is oriented by its
mean curvature vector $\vec{H}$.
To complete the proof of item~\ref{six}, we use the notation
in the proof of Claim~\ref{claimasymp}:
if $g(\phi(\Et)\cap \cM(t_0)) \subset V^+$ (resp. $V^-$),
then $\Et_\infty$ contains
a single leaf $\cL = \cP_H^+$ (resp. $\cP_H^-$).
In either case, the restriction
of $\phi(\Et)$ to $\cM(t_0)$ is a graph over its limit set.
Since $\norma{A_{\phi}}$ is uniformly bounded,
this graphing function converges, with multiplicity $1$,
smoothly to $0$ along $\cH(t)$, when $t\to \infty$.

Note that the covering transformation $\psi\colon \cM(1) \to \cC$
is a finite multiple $k$ of a generator $(k_1,k_2)$ in the fundamental group
$\pi_1(\cC)$. Therefore, $\varphi(E) = \psi(\phi(\Et))$ is asymptotic, with
multiplicity $k$, to the embedded annulus $\cA_H^+ = \psi(\cP_H^+)$
or $\cA_H^- = \psi(\cP_H^-)$.
This completes the proof of Theorem~\ref{thmAsymp}.

\subsection{Some remarks on Theorem~\ref{thmAsymp}.}
A simple consequence of Theorem~\ref{thmAsymp} is the following
corollary, which generalizes to the bounded mean
curvature case some of the corollaries of Theorem~1.1 of~\cite{chr2}.

\begin{corollary}\label{corArea4}
Let $N$ be a complete, noncompact, hyperbolic $3$-manifold of
finite volume. Let $\S$ be a complete, properly immersed surface in $N$ of finite
topology with mean curvature function $H_\S$ satisfying
$\abs{H_\Sigma} \leq H < 1$.
Then, the area of $\Sigma$ satisfies
\begin{equation}\label{are}
\A(\Sigma) \leq \frac{2\pi}{H^2-1}\chi(\S),
\end{equation}

\noindent where $\chi(\S)$ is the Euler characteristic of $\S$ and equality in
\eqref{are} holds if and only if $\Sigma$ is a totally umbilic surface of constant
mean curvature $H$. In particular,
if $\S$ has genus zero, it has at least three ends.

\end{corollary}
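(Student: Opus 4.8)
The plan is to combine the total curvature identity from Theorem~\ref{thmAsymp}(\ref{three}) with the Gauss equation and the area estimate obtained along the way in the proof of items \ref{one}--\ref{four}. First I would recall that by Theorem~\ref{thmAsymp}(\ref{three}) we have $\int_\Sigma K_\Sigma = 2\pi\chi(\Sigma)$ since $\Sigma$ has finite topology. By the Gauss equation in a hyperbolic ($K_N=-1$) ambient space, for a surface with mean curvature function $H_\Sigma$ and principal curvatures $\kappa_1,\kappa_2$ one has $K_\Sigma = -1 + \kappa_1\kappa_2 = -1 + H_\Sigma^2 - \tfrac14(\kappa_1-\kappa_2)^2 \le -1 + H_\Sigma^2 \le H^2 - 1 < 0$ pointwise, with equality at a point if and only if the surface is umbilic there with $|H_\Sigma| = H$. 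Integrating and using the total curvature identity gives
\begin{equation*}
2\pi\chi(\Sigma) = \inte{\Sigma} K_\Sigma \le (H^2-1)\,\A(\Sigma),
\end{equation*}
and since $H^2-1<0$ this rearranges to \eqref{are}. (Note $\chi(\Sigma)\le 0$ here since $\Sigma$ is noncompact of finite topology, consistent with $\A(\Sigma)\ge 0$.)

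For the equality case, equality in \eqref{are} forces $K_\Sigma = H^2-1$ at every point of $\Sigma$, which by the pointwise Gauss identity above forces $\kappa_1 = \kappa_2$ and $H_\Sigma^2 = H^2$ everywhere; that is, $\Sigma$ is totally umbilic with constant absolute mean curvature $H$. Conversely, if $\Sigma$ is totally umbilic with $|H_\Sigma|\equiv H$, then $K_\Sigma \equiv H^2-1$ identically and the computation above becomes an equality, so \eqref{are} is sharp. One should note that Theorem~\ref{thmAsymp}(\ref{one}) already guarantees $\A(\Sigma)<\infty$ so all integrals are finite and the manipulation is legitimate; one may also remark that a totally umbilic example realizing equality exists, namely the standard equidistant annuli $\cA_H^\pm$ of item~\ref{six} (when $H>0$) or a totally geodesic surface (when $H=0$), matching the construction preceding Theorem~\ref{thmAsymp}.

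For the final assertion, suppose $\Sigma$ has genus zero with $k$ ends; then $\chi(\Sigma) = 2 - k$ (a sphere with $k$ punctures). Plugging into \eqref{are} gives $\A(\Sigma) \le \tfrac{2\pi}{H^2-1}(2-k)$. Since $\A(\Sigma) > 0$ and $H^2 - 1 < 0$, we need $2-k < 0$, i.e. $k \ge 3$; one must only rule out $k\le 2$, where $\chi\ge 0$ would force $\A(\Sigma)\le 0$, impossible for a nonempty surface. (The cases $k=0,1$ are excluded since $\Sigma$ is complete noncompact and properly immersed in a manifold with no closed minimal-ish obstruction, and a one-ended genus-zero surface is a plane, which cannot be properly immersed with $|H_\Sigma|<1$ in a cusped hyperbolic manifold by the maximum principle argument already used in the proof of item~\ref{one}; alternatively $k=0,1$ give $\chi = 2,1 > 0$ contradicting $\int_\Sigma K_\Sigma \le 0$.) I do not expect any serious obstacle here: the only point requiring a little care is confirming that all the hypotheses of Theorem~\ref{thmAsymp} are in force (finite topology, proper immersion, $|H_\Sigma|\le H<1$) so that both the finite-area and the total-curvature conclusions apply, and then the argument is a one-line integration of the Gauss equation.
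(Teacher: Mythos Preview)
Your proposal is correct and follows exactly the approach the paper indicates: combine the total curvature identity $\int_\Sigma K_\Sigma = 2\pi\chi(\Sigma)$ from Theorem~\ref{thmAsymp}(\ref{three}) with the pointwise Gauss-equation bound $K_\Sigma \le H^2-1$, then read off the genus-zero consequence from $\chi(\Sigma)=2-2g-n$. The only minor slip is the parenthetical ``$\chi(\Sigma)\le 0$ since $\Sigma$ is noncompact of finite topology,'' which is not true a priori (a plane has $\chi=1$); rather, $\chi(\Sigma)<0$ is a \emph{consequence} of the inequality together with $\A(\Sigma)>0$, as you in fact argue in your last paragraph.
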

\begin{proof}
The proof is straightforward and relies only on item~\ref{three}
of Theorem~\ref{thmAsymp} and on the Gauss equation, after observing
that if $\S$ has genus $g$ and $n$ ends, $\chi(\S) =2-2g-n$.
The details are left to the reader.
\end{proof}

\noindent The next proposition
demonstrates some differences between the $H<1$ case treated
in Theorem~\ref{thmAsymp} and the $H \geq 1$ case.

\begin{proposition}\label{rmksomething}
For any $H \geq 1$ and any hyperbolic $3$-manifold $N$ of finite volume there exists
a complete, properly immersed annulus $\cA$ with constant
mean curvature $H$, and $\cA$ can be chosen to satisfy:

\begin{enumerate}
\item\label{iuno} $\cA$ has infinite area, positive injectivity radius and
bounded norm of its second fundamental form.
\item\label{idos} Any lift of $\cA$ to the hyperbolic $3$-space is a
properly embedded rotationally symmetric annulus.
\end{enumerate}
\end{proposition}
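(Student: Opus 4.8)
The plan is to realize $\cA$ inside a single cusp end of $N$ as the projection of an equidistant surface to a geodesic. Since the conclusion $\A(\cA)=\infty$ is required, $N$ must be noncompact, hence has at least one cusp end; I fix one, say $\cC$, and write it as $\cC=\cM(t_0)/G(u_0,v_0)$ with covering map $\psi\colon\cM(t_0)\to\cC$, where $\cM(t_0)=\{z\ge t_0\}\subset\hn3$. Let $\ell=\{(0,0,z):z>0\}$ be the vertical geodesic through the origin and $R=d_{\hn3}(\cdot,\ell)$; an elementary computation in the half-space model gives $R(x,y,z)=\operatorname{arcsinh}(\sqrt{x^2+y^2}/z)$, so each equidistant surface $R^{-1}(\rho)$ is the Euclidean cone $\{\sqrt{x^2+y^2}=z\sinh\rho\}$ with apex at the origin.

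Assume first $H>1$. By the Hessian comparison~\eqref{musmut}, which is an equality in constant curvature $-1$, the level set $R^{-1}(\rho)$ has principal curvatures $\coth\rho$ and $\tanh\rho$, hence constant mean curvature $\tfrac12(\coth\rho+\tanh\rho)$; since $\rho\mapsto\tfrac12(\coth\rho+\tanh\rho)$ is a decreasing diffeomorphism of $(0,\infty)$ onto $(1,\infty)$, I pick $\rho=\rho(H)$ with $\tfrac12(\coth\rho+\tanh\rho)=H$ and set $\Sigma_0=R^{-1}(\rho)\cap\cM(t_0)\cong\sn1\times[t_0,\infty)$, a properly embedded annular end of this cone, rotationally symmetric about $\ell$. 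Then $\cA:=\psi|_{\Sigma_0}\colon\Sigma_0\to\cC\subset N$ is the desired immersed annulus. For $H=1$ the construction is analogous, but the cones degenerate to horospheres (topological planes) as $\rho\to\infty$, so one must instead take an annular end of a complete rotationally symmetric CMC-$1$ surface of $\hn3$ — a catenoid cousin — with axis $\ell$; I address this below.

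It remains to verify the five properties for $H>1$. (i) $\cA$ is an immersion with constant mean curvature $H$ because $\psi$ is a local isometry and $\Sigma_0$ is embedded in $\cM(t_0)$. (ii) $\cA$ is proper: a divergent sequence in $\Sigma_0\cong\sn1\times[t_0,\infty)$ must have $z$-coordinate tending to $\infty$, so its $\psi$-image lies on the torus $\cT(z)$, which is at distance $\log(z/t_0)\to\infty$ from $\partial\cC$, and therefore diverges in $N$. (iii) In the coordinates $(\theta,z)$ the metric induced on $\Sigma_0$ is $\tfrac{\cosh^2\rho}{z^2}\,dz^2+\sinh^2\rho\,d\theta^2$, which the substitution $u=\cosh\rho\,\log(z/t_0)$ turns into the flat half-cylinder $du^2+\sinh^2\rho\,d\theta^2$, $u\ge0$; hence $\A(\cA)=\A(\Sigma_0)=\infty$ (the integral of $dz/z$ diverges) and $\cA$ has injectivity radius bounded below by a positive constant, with $I_\cA^\infty=\pi\sinh\rho>0$, in sharp contrast with item~\ref{two} of Theorem~\ref{thmAsymp}. (iv) $\norma{A_\cA}$ is bounded because the cone $R^{-1}(\rho)$ is homogeneous — the isometries of $\hn3$ fixing $\ell$ (rotations about $\ell$ and Euclidean dilations centered at the origin) act transitively on it — so $\norma{A}$ is constant. (v) For item~\ref{idos}, the core loop of $\Sigma_0$ is a small Euclidean circle in $\cM(t_0)$ bounding a disk there, whose $\psi$-image is a singular disk in $N$; thus $\cA_*\colon\pi_1(\Sigma_0)\to\pi_1(N)$ is trivial, $\cA$ lifts through the universal cover $\hn3\to N$, one lift being the inclusion $\Sigma_0\hookrightarrow\cM(t_0)\subset\hn3$ — a properly embedded annulus of revolution about $\ell$ — and every other lift is its image under a deck transformation, again a properly embedded annulus of revolution about the corresponding geodesic.

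I expect the main obstacle to be the case $H=1$, where the equidistant cones are no longer available and one must exhibit a genuinely nontrivial CMC-$1$ surface of revolution in $\hn3$ (a catenoid cousin) and check that one of its ends can be positioned inside a cusp of $N$ so as to keep infinite area, bounded second fundamental form, and injectivity radius bounded away from zero; this relies on the classification of rotationally invariant CMC-$1$ surfaces of $\hn3$. A secondary, routine point is to make rigorous the assertion in (ii) that divergence in $\Sigma_0$ forces $z\to\infty$, together with the behaviour of $I_\cA$ near the compact boundary of $\cA$, which one handles by regarding $\cA$ as sitting inside the complete surface obtained by projecting the full cone.
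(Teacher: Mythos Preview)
Your construction has a genuine gap: the immersed annulus $\cA=\psi(\Sigma_0)$ is not complete. The surface $\Sigma_0=R^{-1}(\rho)\cap\cM(t_0)\cong\sn1\times[t_0,\infty)$ carries a nonempty boundary circle at height $z=t_0$, so its image in $N$ is an annulus \emph{with boundary}, not a complete Riemannian surface as the proposition demands. You notice this at the end and propose to fix it by ``projecting the full cone,'' but that does not work as stated. The full equidistant cylinder $R^{-1}(\rho)$ about $\ell=\{(0,0,z):z>0\}$ is indeed a complete annulus in $\hn3$, and the universal cover $\Pi\colon\hn3\to N$ restricts to an isometric immersion on it; however, for this immersion to be \emph{proper} both endpoints of $\ell$ in $\partial_\infty\hn3$ must be parabolic fixed points of the deck group $\Gamma=\pi_1(N)$. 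The point at infinity is a cusp point by hypothesis, but the other endpoint is the origin of $\R^2$, and there is no reason for it to be a parabolic fixed point---generically it is not, and then the corresponding end of the cylinder projects to a recurrent, non-proper end in $N$. The same issue affects your $H=1$ case: a complete annulus has two ends, and \emph{both} must escape into cusps, not just the one you flag.

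The paper's proof repairs exactly this defect. Using that $N$ has finite volume, it produces an element $\alpha\in\Gamma\setminus G$ (where $G\cong\Z\times\Z$ is the stabilizer of $\infty$), sets $q=\alpha(\infty)\in\R^2$, and takes as axis the geodesic $\gamma$ joining $\infty$ to $q$. Since $q=\alpha(\infty)$ is again a parabolic fixed point, both ends of the rotationally invariant CMC surface about $\gamma$ (a Delaunay surface for $H>1$, a catenoid cousin for $H=1$) enter horoball neighborhoods of cusp points, which is precisely what forces $\Pi(A)$ to be proper. Your argument is salvageable by making this same choice of axis; once that is done, your explicit verifications (the flat-cylinder metric yielding infinite area and positive injectivity radius, constant $\norma{A}$ by homogeneity of the equidistant cylinder, triviality of the induced map on $\pi_1$ so that lifts exist and are embedded surfaces of revolution) are correct and in fact more detailed than what the paper records.
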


\begin{proof}
After possibly passing to the oriented two-sheeted covering of $N$, we may
assume that $N$ is orientable.
Let $\cC = \cup_{t\geq 1}\cT(t)$ be a parameterized cusp end of $N$ as described in
the proof of Theorem~\ref{thmAsymp}. Let $\Pi\colon \hn3 \to N$ be the universal
covering map of $N$ and assume without loss of generality that
$\Pi(\{z = t\}) = \cT(t)$. Consider
the fundamental group $\Gamma = \pi_1(N) \subset {\rm ISO}(\hn3)$
of $N$ as a subgroup of the isometry group of $\hn3$; each
element $\varphi \in \Gamma$ satisfies $\Pi \circ \varphi = \Pi$.
Since $N$ has finite volume there exists a subgroup $G$ of $\Gamma$ isomorphic
to $\Z \times \Z$ such that $\{z\geq1\}/G \simeq \cC$, and there exists some
$\alpha \in \Gamma \setminus G$.

Let $p \in \partial_\infty \hn3$ be the point of infinity of the horospheres
$\{z = t\}$. Note that $\alpha$ induces a map on $\partial_\infty \hn3$
such that $\alpha(p) \neq p$.
Let $q = \alpha(p)$ and let
$\gamma$ be the complete geodesic of $\hn3$ whose points at infinity are $p$ and $q$.
Let $A$ be an embedded rotationally symmetric annulus around $\gamma$
of constant mean curvature $H$. If $H > 1$, $A$
is a Delaunay surface (see~\cite{hsiang1}), which is periodic and therefore is a
bounded distance from $\gamma$; in particular $\Pi(A)$ is properly immersed
in $N$.
If $H = 1$, then $A$ is called a catenoid cousin (see,
for instance~\cite{br2}) and the intersections $A \cap \{z = t\}$,
$A \cap \alpha(\{z = t\})$ are circles for $t$ sufficiently large, hence
again $\Pi(A)$ is a properly immersed surface in $N$.
In either case, the properties~\ref{iuno} and~\ref{idos} hold.
\end{proof}

\section{Appendix.}\label{secAppendix}
The proof of item~\ref{iD} of Theorem~\ref{thmMain} used the next
elementary intrinsic
result for annular ends of complete surfaces of nonpositive Gaussian curvature.
As we did not find its statement in the literature,
we present its proof in this appendix.

\begin{lemma}\label{lemmaLimitInjRad}
Let $\Sigma$ be a complete surface of finite topology with nonpositive Gaussian
curvature. Let $e$ be an end of $\Sigma$ and $E$ be an annular end
representative. Then for any divergent sequence of points $\{p_n\}_{n\in\N}$ in $E$,

\begin{equation*}
\displaystyle\lim_{n\to \infty} I_\Sigma(p_n)=I^\infty_\Sigma(e)\in [0,\infty].
\end{equation*}
\end{lemma}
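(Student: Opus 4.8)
The plan is to reduce the statement to the one-ended representative definition of $I_\Sigma^\infty(e)$ and then control the injectivity radius along any divergent sequence by a two-sided comparison. Recall that since $E$ is an annular end of a complete surface of nonpositive curvature, for any $p \in E$ with $I_\Sigma(p)$ finite there is a homotopically nontrivial closed geodesic loop based at $p$ of length $2I_\Sigma(p)$; this is the geometric fact I would use repeatedly. Write $L = I_\Sigma^\infty(e) = \liminf_E I_\Sigma$ (using that $E$ is a one-ended representative, as noted after Definition~\ref{defAs}). I must show $\lim_{n\to\infty} I_\Sigma(p_n) = L$ for every divergent sequence $\{p_n\}$ in $E$.

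First I would establish the easy inequality $\liminf_n I_\Sigma(p_n) \geq L$, which is immediate: any divergent sequence eventually leaves every compact subset of $E$, so it eventually lies in every subannular representative $E' \in \cE(e)$, and $\inf_{E'} I_\Sigma \leq I_\Sigma(p_n)$ for large $n$; taking the supremum over $E'$ gives the bound. The real content is the reverse inequality $\limsup_n I_\Sigma(p_n) \leq L$. I would argue by contradiction: suppose some divergent sequence $\{p_n\}$ has $I_\Sigma(p_n) \to L' > L$ (allowing $L' = \infty$), and in particular $I_\Sigma(p_n) > L'' > L$ for all large $n$, where $L < L'' < L'$. By definition of $L$ as a liminf, there is another divergent sequence $\{q_m\}$ in $E$ with $I_\Sigma(q_m) \to L$, so $I_\Sigma(q_m) < L''$ for large $m$. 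The goal is to derive a contradiction from the coexistence of ``thin'' points $q_m$ and ``thick'' points $p_n$ arbitrarily far out in the annular end.

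The mechanism I would use is that the annular topology of $E$ forces the short geodesic loops to be ``nested'' in a controlled way. Concretely: fix a large thick point $p = p_n$ with $I_\Sigma(p) > L''$. The set of points $x \in E$ with $I_\Sigma(x) \leq L''$ cannot separate $p$ from the end, because a homotopically nontrivial loop of length $\leq 2L''$ through such an $x$, together with a core curve of the annulus, would have to be freely homotopic; but then, using that $\{q_m\}$ is divergent, one finds arbitrarily-far-out thin points on ``both sides'' of $p$ in $E \cong \sn1 \times [0,\infty)$. I would make this precise by passing to the universal cover $\widetilde{E} \cong \R \times [0,\infty)$ with deck group $\Z$ generated by $T$: lifts $\widetilde\gamma$ of the nontrivial geodesic loops are geodesic segments from $\widetilde x$ to $T\widetilde x$, and their lengths are exactly $2I_\Sigma(x)$ when $I_\Sigma(x)$ is realized by the class of the core curve (which, by Gauss--Bonnet, it is, since on a nonpositively curved annulus every nontrivial class is a positive multiple of the core and the shortest is the core itself). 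Then $x \mapsto d_{\widetilde E}(\widetilde x, T\widetilde x)$ is a $1$-Lipschitz function on $\widetilde E$ that descends to $2 I_\Sigma$ on $E$; nonpositive curvature gives convexity of $t \mapsto d_{\widetilde E}(c(t), Tc(t))$ along geodesics $c$. Using convexity along a divergent geodesic ray in $\widetilde E$ that projects into $E$ and passes near both thin and thick points yields a monotonicity that is incompatible with $I_\Sigma$ oscillating between values near $L$ and values above $L'' > L$ out at infinity. That convexity-plus-annular-topology argument is the crux.

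The main obstacle I anticipate is handling the case $L = 0$ and, more generally, making rigorous the claim that the injectivity-radius function on an annular end of a Hadamard-covered surface is eventually monotone (or at least cannot oscillate between separated values near the end). The clean route is: (i) show $2I_\Sigma$ equals the translation length function $x \mapsto \mathrm{dist}(\widetilde x, T \widetilde x)$ on the cover, using nonpositive curvature and Gauss--Bonnet to rule out loops that wrap the core more than once being shorter; (ii) invoke convexity of the displacement function of the semisimple (or parabolic-like) isometry $T$ along geodesics; (iii) observe that the minimal set of $T$, if nonempty, is a totally geodesic flat strip or line, and the displacement function is a proper convex exhaustion transverse to it, forcing $I_\Sigma \to \inf I_\Sigma$ along every divergent sequence; if the infimum is not attained ($T$ ``parabolic-like'', $L$ could be $0$ or positive but not realized), a direct limiting argument with the same convexity gives that $I_\Sigma(p_n) \to \inf = L$ regardless. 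Once eventual monotone-to-$L$ behavior of $I_\Sigma$ is in hand, the lemma follows immediately, and the same reasoning covers $L = \infty$ (displacement function proper and coercive, with no finite liminf, contradicting the existence of a divergent sequence with bounded $I_\Sigma$ unless the limit is genuinely $\infty$).
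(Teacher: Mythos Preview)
Your approach via the convexity of the displacement function $\delta_T(\widetilde{x}) = d(\widetilde{x}, T\widetilde{x})$ on the Hadamard cover is a legitimate and genuinely different route from the paper's, but step (iii) contains a real error. You assert that when $\mathrm{Min}(T) \neq \emptyset$ the displacement function is a proper convex exhaustion ``forcing $I_\Sigma \to \inf I_\Sigma$ along every divergent sequence.'' This is false: on a hyperbolic funnel (quotient of $\hn2$ by a hyperbolic isometry) the core closed geodesic is the projection of the axis, and every divergent sequence in the annular end moves \emph{away} from the axis, so $\delta_T \to \infty$ and $I_\Sigma^\infty(e) = \infty$, not $\inf I_\Sigma$. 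More broadly, convexity of $\delta_T$ along geodesics of $\widetilde{\Sigma}$ yields monotone limits along individual rays, but you never link arbitrary divergent sequences in $E$ to one another or to a common ray; different rays going out the end could a priori have different limits for $\delta_T$, and nothing in your sketch rules this out. There are secondary gaps too: $\delta_T$ is $2$-Lipschitz, not $1$-Lipschitz, and the identification $2I_\Sigma = \delta_T$ requires the shortest loop at $p$ to represent $T^{\pm 1}$ rather than a higher power, which your Gauss--Bonnet aside does not actually establish.

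The paper's argument is entirely different and sidesteps these issues. Assuming oscillation (thin points $p_n$ with short geodesic loops $\gamma_n$, thick points $q_n$ with long loops $\Gamma_n$, suitably nested), it minimizes length in the core homotopy class within the compact annulus $E_n$ bounded by $\Gamma_n$ and $\Gamma_{n+1}$. The key point is that any short competitor entering a small ball around $q_n$ or $q_{n+1}$ would lie inside the simply connected ball $B_E(q_n, L_n)$ and hence be null-homotopic; so the minimizer avoids the nonsmooth corners of $\partial E_n$ and is a smooth simple closed geodesic $\alpha_n$. Gauss--Bonnet on the annulus between $\alpha_1$ and $\alpha_{n+1}$ (two smooth closed geodesic boundary components, Euler characteristic zero) forces it to be flat, hence isometric to a metric product cylinder with constant injectivity radius, contradicting the oscillation. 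This is a specifically $2$-dimensional mechanism with no counterpart in your convexity outline.
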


\begin{proof}
When $\S$ is simply connected, $I_\S$ is
infinite at every point of $\Sigma$; hence, the lemma holds in this case. Assume now
that $\Sigma$ is not simply connected, thus $I_\S(p)$ is finite for every $p\in\S$.
Let $e$ and $E$ be as in the statement of the lemma and let
$I_E $ denote
the restriction of the injectivity radius function of $\Sigma$ to $E$.

To prove the lemma it suffices to show that
given any two intrinsically divergent sequences of points
$\{p_n\}_{n\in \N},\ \{q_n\}_{n\in \N}$ in $ E$, then the two
limits $\lim_{n\rightarrow \infty}I_E(p_n),\, \lim_{n\rightarrow \infty}I_E(q_n)$
exist in $[0,\infty]$ and
\begin{equation*}
\displaystyle \lim_{n\rightarrow \infty}I_E(p_n) = \lim_{n\rightarrow \infty}I_E(q_n).
\end{equation*}

\noindent The failure of the previous statement
implies that (after possibly passing to subsequences)
there exist intrinsically divergent sequences $\{p_n\}_{n\in\N}$,
$\{q_n\}_{n\in\N}$ in $E$ such that
$$\lim_{n\to\infty} I_E(p_n) = \ell \in [0,\,\infty),\quad
\lim_{n\to\infty} I_E(q_n) = L \in (\ell,\,\infty].$$

\noindent There exist embedded geodesic
loops $\gamma_n$, $\Gamma_n$ based respectively at the points $p_n$, $q_n$ with
$\mbox{\rm Length}(\gamma_n )=2I_E(p_n) = 2\ell_n$, $\mbox{\rm Length}(\Gamma_n) =2I_E(q_n) = 2L_n$,
and such that, for all $n \in \N$,
\begin{equation}\label{limits}
\ell_n< \ell+\ve,\quad L_n > \ell+3\ve,
\end{equation}

\noindent for some $\ve>0$.
After passing to subsequences, we will assume that

\begin{enumerate}
\item the geodesic loops $\g_n$ form a pairwise disjoint family;
\item for all $n,k\in \N$, $p_{n+k}$ lies in the annular subend $W_n$ of $E$ with boundary $\g_n$;
\item $q_n$ lies in the compact annulus in $E$ bounded by $\g_n$ and $\g_{n+1}$.
\end{enumerate}

Since $I_E$ is continuous and $W_n$ is connected, there exists
a point $q_n'\in W_n\setminus W_{n+1}$ with $I(q_n')\in (\ell+3\ve,\ell +4\ve)$.
Hence, replacing the points $q_n$ by the points $q_n^\prime$,
we may assume that $L$ is a finite number.

Let $E_n$ be the compact annulus in $E$ bounded by $\Gamma_n$ and $\Gamma_{n+1}$.
By the same argument as in the proof of Proposition~\ref{propInjPos},
since $\ell$ and $L$ are finite, we may assume that
$\{\gamma_n,\,\Gamma_n\}_{n\in\N}$ is a collection of pairwise disjoint
curves, with $\g_n \subset E_n$.

We claim that there exists a smooth, homotopically nontrivial, simple
closed geodesic $\a_n\subset \Int(E_n)$,
with length at most $\ell_n$.
Let $\Lambda_n $ be the set of simple closed rectifiable curves in $E_n$
homotopic to $\gamma_n$.
If $\beta \in \Lambda_n$ admits a point
$p$ in $\beta \cap B_E(q_n,\ve)$, then
$\len(\beta) > 2\ell_n $. Indeed, if $\len(\beta) \leq 2 \ell_n$, it
would follow from the triangle inequality and from \eqref{limits} that,
for every $x \in \beta$ we have $d_E(q_n,x)\leq L_n$.
Since $B_E(q_n,\,L_n)$
is simply connected, $\beta$ is homotopically trivial in $E$, which implies
that it is also homotopically trivial in $E_n$, contradicting $\beta\in \Lambda_n$.
A similar argument shows that if $\beta$ admits a point $p \in B_E(q_{n+1},\ve)$,
then $\len(\beta)> 2\ell_n$.

Hence, any minimizing sequence in $\Lambda_n$ can be assumed to stay
at least at a distance $\ve$ from the pair of points $q_{n},\,q_{n+1}$, where the boundary of $E_n$
is not smooth.
Then standard minimization arguments imply that
there exists a smooth closed geodesic
$\alpha_n \in \Lambda_n$ which minimizes the lengths of curves in $\Lambda_n$,
and, since $E_n$ is an annulus and $\a_n$ is the generator
of the fundamental group of
$E_n$, $\a_n$ is a simple closed geodesic.

The Gauss-Bonnet formula implies that
each compact annulus $A_n$ bounded by $\alpha_1$ and $\alpha_{n+1}$ is flat,
thus $E$ has a subend $A_\infty$ that is isometric to a flat cylinder with boundary
being a simple closed geodesic.
In fact the flat $A_\infty$ is easily seen to be isometric
to a metric product of a circle with $[0,\infty)$, which implies that
the injectivity radius function has the constant value $\mbox{\rm Length}(\a_1)/2 $
on $A_\infty$. This contradicts \eqref{limits}, proving the lemma.\end{proof}


\center{William H. Meeks, III at profmeeks@gmail.com\\
Mathematics Department, University of Massachusetts, Amherst, MA 01003}

\center{Alvaro K. Ramos, at alvaro.ramos@ufrgs.br\\
Instituto de Matem\'{a}tica e Estat\'{i}stica, Universidade Federal do Rio Grande do
Sul, Porto Alegre, Brazil}

\bibliographystyle{plain}
\bibliography{bill}

\begin{thebibliography}{10}

\bibitem{amr1}
C.~Adams, W.~H. Meeks~III, and A.~Ramos.
\newblock Properly embedded surfaces in hyperbolic 3-manifolds.
\newblock Work in progress.

\bibitem{br2}
R.~Bryant.
\newblock Surfaces of mean curvature one in hyperbolic space.
\newblock {\em Asterisque}, 154-155:321--347, 1987.
\newblock MR0955072, Zbl 0635.53047.

\bibitem{cv1}
S.~Cohn-Vossen.
\newblock K\"urzeste wege und totalk\"ummung auf fl\"achen.
\newblock {\em Compositio Math.}, 2:69--133, 1935.

\bibitem{cm35}
T.~H. Colding and W.~P. Minicozzi~II.
\newblock The {C}alabi-{Y}au conjectures for embedded surfaces.
\newblock {\em Ann. of Math.}, 167:211--243, 2008.
\newblock MR2373154, Zbl 1142.53012.

\bibitem{chr2}
P.~Collin, L.~Hauswirth, and H.~Rosenberg.
\newblock Minimal surfaces in finite volume hyperbolic 3-manifolds and in ${M}
  \times {S}$.
\newblock To appear in Amer. J. Math, preprint at
  http://arxiv.org/pdf/1304.1773.pdf.

\bibitem{doc2}
M.~{do Carmo}.
\newblock {\em {R}iemannian {G}eometry}.
\newblock Birkhauser Boston, Inc., Boston, Massachusetts, 1992.
\newblock MR1138207, Zbl 0752.53001.

\bibitem{esch1}
J.-H. Eschenburg.
\newblock Comparison theorems and hypersurfaces.
\newblock {\em Manuscripta math.}, 59(3):295--323, 1987.
\newblock MR0909847, Zbl 0642.53044.

\bibitem{eschHein1}
J.-H. Eschenburg and E.~Heintze.
\newblock Comparison theory for riccati equation.
\newblock {\em Manuscripta math.}, 68(2):209--214, 1990.
\newblock MR1063226, Zbl 0714.53029.

\bibitem{frn1}
K.~Frensel.
\newblock Stable complete surfaces with constant mean curvature.
\newblock {\em Bol. Soc. Bras. Mat.}, 27(2):129--144, 1996.
\newblock MR1418929, Zbl 0890.53054.

\bibitem{gom2}
M.~J. Gomes.
\newblock {\em Sobre hipersuperf\'icies com curvatura m\'edia constante no
  espa\c{c}o hiperb\'olico}.
\newblock PhD thesis, IMPA, Rio de Janeiro, Brasil, 1985.

\bibitem{hsiang1}
W.~Y. Hsiang.
\newblock On generalization of theorems of {A}. {D}. {A}lexandrov and {C}.
  {D}elaunay on hypersurfaces of constant mean curvature.
\newblock {\em Duke Math. J.}, 49(3):485--496, 1982.
\newblock MR0672494, Zbl 0496.53006.

\bibitem{hu1}
A.~Huber.
\newblock On subharmonic functions and differential geometry in the large.
\newblock {\em Comment. Math. Helvetici}, 32:181--206, 1957.
\newblock MR0094452, Zbl 0080.15001.

\bibitem{mmpr4}
W.~H. Meeks~III, P.~Mira, J.~P\'{e}rez, and A.~Ros.
\newblock Constant mean curvature spheres in homogeneous three-spheres.
\newblock Preprint at http://arxiv.org/abs/1308.2612.

\bibitem{mpe11}
W.~H. Meeks~III and J.~P\'{e}rez.
\newblock Constant mean curvature surfaces in metric {L}ie groups.
\newblock In {\em Geometric Analysis}, volume 570, pages 25--110. Contemporary
  Mathematics, edited by J. Galvez, J. P\'{e}rez, 2012.
\newblock MR2963596, Zbl 1267.53006.

\bibitem{mr13}
W.~H. Meeks~III and H.~Rosenberg.
\newblock The minimal lamination closure theorem.
\newblock {\em Duke Math. Journal}, 133(3):467--497, 2006.
\newblock MR2228460, Zbl 1098.53007.

\bibitem{mt7}
W.~H. Meeks~III and G.~Tinaglia.
\newblock Curvature estimates for constant mean curvature surfaces.
\newblock Preprint at http://arxiv.org/pdf/1502.06110.pdf.

\end{thebibliography}

\end{document}